\title{Genuine deformations of Euclidean hypersurfaces\\ in higher codimensions I}
\author{Diego N. Guajardo\footnote{Partially supported by CNPq and FAPERJ.}}
\date{}
\newcommand{\tpitchfork}{%
  \vbox{
    \baselineskip\z@skip
    \lineskip-.52ex
    \lineskiplimit\maxdimen
    \m@th
    \ialign{##\crcr\hidewidth\smash{$-$}\hidewidth\crcr$\pitchfork$\crcr}
  }%
}
\newtheorem{thm}{Theorem}[section]
\newtheorem{lema}[thm]{Lemma}
\newtheorem{prop}[thm]{Proposition}
\newtheorem{cor}[thm]{Corollary}
\newtheorem{defn}[thm]{Definition} 
\theoremstyle{remark}
\newtheorem{remark}[thm]{Remark}
\providecommand{\keywords}[1]
{
  \small	
  \textbf{\textit{Keywords---}} #1
}
\begin{document}

\maketitle
\begin{abstract}
    Sbrana and Cartan gave local classifications for the set of Euclidean hypersurfaces $M^n\subseteq\mathbb{R}^{n+1}$ which admit another genuine isometric immersions in $\mathbb{R}^{n+1}$ for $n\geq 3$. The main goal of this paper is to extend their classification to higher codimensions. Our main result is a complete description of the moduli space of genuine deformations of generic hypersurfaces of rank $(p+1)$ in $\mathbb{R}^{n+p}$ for $p\leq n-2$. As a consequence, we obtain an analogous classification to the ones given by Sbrana and Cartan providing all local isometric immersions in $\mathbb{R}^{n+2}$ of a generic hypersurface $M^n\subseteq\mathbb{R}^{n+1}$ for $n\geq 4$. We also show how the techniques developed here can be used to study conformally flat Euclidean submanifolds. 
\end{abstract}
\hspace{30pt}
\keywords{Genuine rigidity, deformable submanifolds, conformally flat Euclidean submanifolds,

Darboux-Manakov-Zakharov systems.}
\section{Introduction}

The classical Theorem of Sacksteder \cite{Sack} states that a compact Euclidean hypersurface is rigid as long as the set of totally geodesic points doesn't disconnect the manifold. 
In \cite{FG} an analogous result for compact submanifolds $f:M^n\rightarrow\mathbb{R}^{n+p}$ and $g:M^n\rightarrow\mathbb{R}^{n+q}$ with $p+q<\min\{5,n\}$ is proved by allowing some natural and necessary singularities. This problem was studied before for $p=q=2$ in \cite{DG2}.

Locally, hypersurfaces are much more deformable. Sbrana in \cite{Sb} studied the local problem of classifying the Riemannian manifolds which possess at least two (locally) non-congruent isometric immersions $f,g:M^n\rightarrow\mathbb{R}^{n+1}$. 
He proved that, if $M^n$ is nowhere flat, then $M^n$ belongs to one of four types. The two non-generic types, the {\it surface-like} and {\it ruled} ones, are highly deformable. 
In contrast, the manifolds belonging to the {\it continuous type} possess a continuous one-parameter family of such immersions, while the ones of the {\it discrete type} have exactly two. This description was given in terms of what is now called the Gauss parametrization which parametrizes the hypersurface in terms of its Gauss map and its support function. A few years latter, Cartan in \cite{Ca} gave an equivalent description in terms of envelopes of spheres. For a modern approach to the problem see \cite{DFT2}.

In this work we extend the Sbrana-Cartan classification to higher codimension. For this, we use the concept of genuine rigidity which extends the one of isometric rigidity. This notion was introduced in \cite{DF3} and extended in \cite{FG}, and is more adequate for the study of rigidity in higher codimensions; see for example \cite{DFT}, \cite{DG2} and \cite{FF}.

Generic hypersurfaces in the Sbrana-Cartan classification satisfy that both the Gauss map and the support function are solutions of the same linear hyperbolic or elliptic partial differential equation. In this work we will naturally associate to our problem a Darboux-Manakov-Zakharov (DMZ) system of PDEs which plays the role of such PDE. 
Darboux introduced such systems to study the problem of triply orthogonal system of surfaces, which was a hot topic during the 19th century, to the point that Bianchi \cite{Bi} wrote a 850 pages book on the subject. DMZ systems and the $n-$orthogonal system of hypersurfaces have gained attention more recently due to the strong relation with a $n$-dimensional generalization of the Euler equation in hydrodynamics, see \cite{DN} and~\cite{T}. 
 
Recall that $(u_0\ldots,u_p)$ is a {\it conjugate chart} of an immersed submanifold of the sphere $h:L^{p+1}\rightarrow\mathbb{S}^{n}$ if the associated Christoffel symbols satisfy $\Gamma_{ij}^k=0$ for distinct indices and $\alpha^h(\partial_{u_i},\partial_{u_j})=0$, where $\alpha^h$ is the second fundamental form of $h$. 
Equivalently, $h$ as a map in $\mathbb{R}^{n+1}$ is a solution of the DMZ system
$$(Q(h))_{ij}:=Q_{ij}(h)=\partial^2_{ij}h-\Gamma_{ji}^i\partial_ih-\Gamma_{ij}^j\partial_jh+g_{ij}h=0, \quad \forall 0\leq i< j\leq p.$$
Notice the similarity with Cartan submanifolds; see for example \cite{KT} and \cite{KT2}. 

The work done by Dajczer, Florit and Tojeiro in \cite{DFT2} and \cite{DFT} is particularly important for this paper, since several of the techniques developed here were inspired by it. 
In particular, in \cite{DFT} they classify the Euclidean hypersurfaces of rank $2$ (that is, the number of non-zero principal curvatures is exactly two) that have genuine deformations in $\mathbb{R}^{n+2}$. 

The following is the main result of this work, which for $p=1$ recovers the Sbrana-Cartan classification. For this, we have extended the notion of species that defines those families simply by measuring the trivial holonomy of what we call the {\it Sbrana bundle associated to $Q$}. 
We say that a hypersurface $f:M^n\rightarrow\mathbb{R}^{n+1}$ of rank $(p+1)<n$ is of {\it $r^{th}$-type} if the moduli space of genuine deformations $g:M^n\rightarrow\mathbb{R}^{n+p}$ is naturally a union of at most $(p+1)$ convex open subsets of $\mathbb{R}^r$. 
\begin{thm}\label{teorema introduccion}
    Let $f:M^n\rightarrow\mathbb{R}^{n+1}$ be a simply connected hypersurface of rank $(p+1)$, with $1\leq p\leq n-2$. If $p\geq 7$ assume in addition that $f$ is not $(n-p+2)$-ruled. Then $f$ is genuinely rigid on $\mathbb{R}^{n+q}$ for any $q<p$. Moreover, if $f$ possesses a genuine deformation in $\mathbb{R}^{n+p}$ and is generic, then, along each connected component of an open dense subset of $M^n$, $f$ is of $r^{th}$-type for some $r\in\{0,\ldots,p\}$. In this case, the Gauss map $h$ of $f$ has a unique conjugate chart of $(p+1-r)^{th}$-species, and its support function $\gamma=\langle f,h\rangle$ also satisfies $Q(\gamma)=0$.
    
    Conversely, under the Gauss parametrization, $(h,\gamma)$ as above gives rise to an Euclidean hypersurface genuinely deformable in codimension $p$. Furthermore, $f$ is of $r^{th}$-type where $M^n$ is generic.
\end{thm}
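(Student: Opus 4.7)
The plan is to use the Gauss parametrization to reduce everything to a problem about the pair $(h,\gamma)$. Since $f$ has rank $(p+1)$, its relative nullity distribution has rank $n-p-1$ and the leaf space is locally an open subset of the submanifold $L^{p+1}\hookrightarrow\mathbb{S}^n$ carrying the Gauss map $h$; the hypersurface is recovered from $h$ and the support function $\gamma=\langle f,h\rangle$. Every isometric deformation of $f$ descends to extra data on $L^{p+1}$, so the task splits into (i) extracting necessary conditions on $(h,\gamma)$ from a genuine deformation $g$, (ii) organising these data into the Sbrana bundle to read off the $r^{th}$-type structure, (iii) proving rigidity for $q<p$, and (iv) performing the converse construction.

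For step (i), given a genuine $g:M^n\to\mathbb{R}^{n+p}$, I would form the flat bilinear form $\beta(X,Y)=\alpha^f(X,Y)\oplus\alpha^g(X,Y)$ valued in $N_fM\oplus N_gM$ and perform a Moore-type nullspace analysis on the $(p+1)$-dimensional complement of the relative nullity. Genuineness prevents any common normal factor from splitting off, so on a dense open set the null directions of $\beta$ are exactly $p+1$ distinguished lines; these assemble into a conjugate chart $(u_0,\ldots,u_p)$ on $L^{p+1}$, equivalently $Q(h)=0$. Writing the Codazzi equation for $f$ in this chart, and using that $\gamma$ encodes the affine position of $f$ along $h$, forces $Q(\gamma)=0$. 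The genericity of $f$, and when $p\geq 7$ the non-$(n-p+2)$-ruled hypothesis, are precisely what prevent $\beta$ from exhibiting the higher-dimensional null cones that would destroy the uniqueness of the chart, so this is the main obstacle of the proof. For step (iii) the same analysis applied to a putative $g:M^n\to\mathbb{R}^{n+q}$ with $q<p$ would force a conjugate structure of rank $p+1$ into a normal bundle of rank $q$, producing a common subbundle and contradicting genuineness.

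For step (ii) I would assemble the Sbrana bundle $\mathcal{S}\to L^{p+1}$ of rank $p+1$ whose fibre parametrises admissible deformation data at each point and carries a natural connection, flat as a consequence of Codazzi applied in the conjugate chart. Parallel sections correspond bijectively to genuine deformations in codimension $p$; simple connectedness of $M^n$ makes the space of parallel sections a linear $\mathbb{R}^r$, and the open nondegeneracy constraints carve out at most $(p+1)$ convex components, which is exactly the $r^{th}$-type / $(p+1-r)^{th}$-species description. For the converse in step (iv), starting from $(h,\gamma)$ as in the statement I would use the conjugate chart and the DMZ equations $Q(h)=Q(\gamma)=0$ to prescribe a second fundamental form and normal connection for a candidate $g$; these equations translate exactly into the Gauss, Codazzi and Ricci equations, so by the fundamental theorem of submanifolds $g$ integrates to an immersion in $\mathbb{R}^{n+p}$, with genuineness built into the construction. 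That this $g$ realises the prescribed $r^{th}$-type follows by retracing the Sbrana-bundle correspondence.
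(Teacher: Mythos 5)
Your overall architecture matches the paper's: reduce to the nullity leaf space via the Gauss parametrization, analyze the flat form $\beta=\alpha^f\oplus\alpha^g$, encode the deformation data in a bundle with connection over $L^{p+1}$, and run the fundamental theorem of submanifolds backwards for the converse. However, there are two genuine gaps in how you produce the key objects. First, the conjugate chart does \emph{not} come from a ``Moore-type nullspace analysis'' of $\beta$: here $\beta$ takes values in the Lorentzian bundle $W^{p,1}=T^\perp_gM\oplus T^\perp_fM$, and Moore's diagonalization theorem fails in indefinite signature, so flatness plus genuineness alone does not give you $p+1$ distinguished diagonalizing lines. The paper instead gets the frame $\{X_i\}$ as the eigenframe of a semisimple splitting tensor $C_{T_0}$ of the nullity distribution --- this is exactly what the genericity hypothesis supplies --- and then uses the commutation identity $\beta(C_SX,Y)=\beta(X,C_SY)$ (a consequence of Codazzi) to show this intrinsic frame simultaneously diagonalizes $\beta$ for \emph{every} deformation, which is also what makes the chart unique. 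Without this mechanism your step (i) does not get off the ground. Relatedly, you never address the cases $p\in\{5,6\}$, where the non-ruled hypothesis is neither automatic nor assumed and the paper needs a separate argument (again via the splitting tensor) to show $\mathcal{S}(\beta)$ is non-degenerate.

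Second, and more seriously, your claim that the Sbrana bundle connection is ``flat as a consequence of Codazzi'' is false, and if it were true it would contradict the theorem you are proving: a flat connection on a simply connected base has a full $(p+1)$-dimensional space of parallel sections, so every deformable hypersurface would be of maximal type $r=p$, wiping out the entire species stratification (and, for $p=1$, the discrete type in the Sbrana--Cartan classification). The connection $\tilde\nabla=d+\omega$ with $\omega$ built from the Christoffel symbols $\Gamma^j_{ij}$ is generically curved; the integer $r$ is by definition the rank (minus one, after the affine constraint $1+\sum_i\varphi_i=0$) of the \emph{maximal flat parallel subbundle} $\hat{\mathcal{N}}_p$, computed by intersecting the kernels of the curvature and its iterated covariant derivatives in the spirit of Ambrose--Singer. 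The admissibility (openness) conditions then cut $\hat{\mathcal{N}}_p(q)$ into at most $p+1$ convex pieces. You reach the right final picture, but the step that actually determines $r$ --- measuring the trivial holonomy rather than invoking flatness --- is missing from your argument.
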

We point out that in the converse the deformations may be in some semi-Euclidean space $\mathbb{R}^{n+p}_{\mu}$, that is, $\mathbb{R}^{n+p}$ with a non-degenerate inner product of index $\mu\leq p$. The value of $\mu$ is easily determined also by the trivial holonomy of the Sbrana bundle of $Q$. 

Although the Sbrana-Cartan work was done in 1908, it took almost a century to find explicit examples of hypersurfaces of the discrete type. The first examples, which are now called of {\it intersection type}, were found also in \cite{DFT2} as intersection of two generic flat hypersurfaces $N^{n+1}_1, N^{n+1}_2\subseteq\mathbb{R}^{n+2}$, in which case $Q$ is hyperbolic. 
This construction also shows the local nature of the classification by producing examples of connected locally deformable hypersurfaces of locally different types in the Sbrana-Cartan classification. Those examples are characterized by the vanishing of one of the Laplace invariants of $Q$.
Later, Dajczer-Florit in \cite{DF} gave a procedure to obtain the first examples of locally deformable hypersurfaces of discrete-type with $Q$ elliptic.

Until now there is no analogous classification to that of Sbrana and Cartan in higher codimensions, only classifications in certain restricted cases, not even in codimension $2$. In this case, Theorem 1 of \cite{DF3} shows that if $f:M^n\rightarrow\mathbb{R}^{n+1}$ is genuinely deformable in $\mathbb{R}^{n+2}$, then its rank must be at most three. If its rank is one or less the hypersurface is flat, and all its isometric immersions in $\mathbb{R}^{n+2}$ are described in Corollary 18 of \cite{FF}. Theorem 1 of \cite{DFT} describes the rank two generic case in terms of their support function $\gamma$ and a conjugate coordinate system for its Gauss map $h:L^2\rightarrow\mathbb{S}^n$, just as in Theorem \ref{teorema introduccion}. Moreover, it computes the moduli space $\mathcal{C}_h$ of deformations of $f$ in $\mathbb{R}^{n+2}$. Theorem \ref{teorema introduccion} for $p=2$ analyzes the generic rank three case.
Thus, the following result summarizes the above discussion, and characterizes all generic Euclidean hypersurfaces which are genuinely deformable in $\mathbb{R}^{n+2}$  and the respective moduli space of their {\it honest} deformations, as defined in \cite{FF}. The concept of honest rigidity is the natural one for such a result and is slightly stronger than genuine rigidity.
We point out that Theorem 1 of \cite{DFT} has a gap for hypersurfaces of intersection type. Yet, Theorem 33 of \cite{FF} and an adaptation of that result for Lorentz ambient space (Theorem \ref{deforamciones de las intersecciones en lorentz} bellow) allow us to fill this gap, describing the honest deformations for hypersurfaces of intersection type in codimension $2$ in terms of its {\it shared dimension} $I$; see Section \ref{seccion deformaciones en codim 2}. 

\begin{thm}\label{Descripcion SC p=2 generica}
    Let $f:M^{n}\rightarrow\mathbb{R}^{n+1}$ be a genuinely deformable hypersurface in codimension $2$. Then the rank of $M^n$ is at most 3. Assume that $M^n$ is generic and nowhere flat, in particular $n\geq 4$. Then each connected component $U$ of an open dense subset of $M^n$ falls in exactly one of these categories:
    \begin{enumerate}
        \item The rank of $U$ is $3$. The Gauss map $h:L^3\rightarrow\mathbb{S}^{n}$ is of $(3-r)^{th}$-species for some $r\in\{0,1,2\}$ and the support function $\gamma$ satisfying $Q(\gamma)=0$. In this case, $f|_U$ is of $r^{th}$-type and all its genuine deformations in $\mathbb{R}^{n+2}$ are honest deformations;
        \item The rank of $U$ is $2$ and $f|_U$ is not a Sbrana-Cartan hypersurface of intersection type. Then the Gauss map $h:L^2\rightarrow\mathbb{S}^{n}$ of $f|_U$ has a conjugate chart and the support function $\gamma$ satisfies $Q(\gamma)=0$. In this case, the moduli space of honest deformations is naturally $\mathcal{C}_h$;
        \item The rank of $U$ is $2$ and $f|_U$ is a Sbrana-Cartan hypersurface of intersection type. That is, $U$ is obtained as an intersection of two flat Riemannian hypersurfaces on $\mathbb{R}^{n+2}_{\nu}$ for $\nu\leq 1$ and $f|_U$ is the inclusion in one of such hypersurfaces.
        Then $f|_U$ is honestly rigid in $\mathbb{R}^{n+2}$, unless $I=2$. In the latter case, the moduli space of honest deformations of $f$ in $\mathbb{R}^{n+2}$ is naturally an open interval of $\mathbb{R}$.
    \end{enumerate}
\end{thm}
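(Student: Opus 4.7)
The plan is to stratify $M^n$ by the rank of $f$ and then invoke, on each stratum of an open dense subset, the appropriate classification already available in the literature together with the new Theorem \ref{teorema introduccion} for $p=2$ and the Lorentz adaptation stated later in the paper. The upper bound on the rank is immediate: Theorem 1 of \cite{DF3} asserts that a hypersurface genuinely deformable in codimension $2$ has rank at most $3$, so we may pass to the open dense subset on which the rank is locally constant and equal to $2$ or $3$ (the case of rank $\leq 1$ being excluded since it would force $f$ to be flat). It therefore suffices to analyze each connected component $U$ of this subset, where the rank is a well-defined integer.

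On a component $U$ of rank $3$, I would apply Theorem \ref{teorema introduccion} with $p=2$: since $f$ is assumed generic and since $p=2<7$ the ruled exception does not enter. That theorem yields directly both the existence of a unique conjugate chart on the Gauss image $h:L^3\to\mathbb{S}^n$ of $(3-r)^{th}$-species and the fact that $\gamma=\langle f,h\rangle$ satisfies $Q(\gamma)=0$, and identifies the moduli space of genuine deformations with an (at most $3$-fold) union of convex open subsets of $\mathbb{R}^r$. To upgrade genuine to honest in this case, one observes that an honest deformation is by definition a genuine deformation that does not further reduce codimension on any open subset; here any genuine deformation $g:M^n\to\mathbb{R}^{n+2}$ is already nowhere a composition with a hypersurface, because the rank hypothesis combined with the classification forces $g$ to use the full normal bundle. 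A short contradiction argument using the rank-$3$ condition and the uniqueness of the conjugate chart shows no honest/genuine gap occurs.

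On a component $U$ of rank $2$ which is not of intersection type, I would invoke Theorem 1 of \cite{DFT} directly: it gives exactly the description of $h:L^2\to\mathbb{S}^n$ as admitting a conjugate chart with $Q(\gamma)=0$, and identifies the space of honest deformations with $\mathcal{C}_h$. The only subtle point is to verify that the possible intersection-type locus is closed with empty interior in $U$ under the assumption of case (2); this is precisely the hypothesis, so case (2) is immediate once the dichotomy is set up.

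The main obstacle is case (3), the intersection type in codimension $2$, where Theorem 1 of \cite{DFT} has the gap mentioned in the introduction. Here I would use the characterization of $f|_U$ as the inclusion $M^n\hookrightarrow N^{n+1}_i\subseteq\mathbb{R}^{n+2}_\nu$ in one of two flat (possibly Lorentzian, $\nu\leq 1$) hypersurfaces whose transverse intersection is $M^n$, together with the shared dimension $I$ introduced in Section \ref{seccion deformaciones en codim 2}. I would then apply Theorem 33 of \cite{FF} in the Riemannian ($\nu=0$) subcase and Theorem \ref{deforamciones de las intersecciones en lorentz} in the Lorentz ($\nu=1$) subcase to classify the honest deformations of $f|_U$ in $\mathbb{R}^{n+2}$: both results state that the only new honest deformations beyond the trivial one arise when $I=2$, and that in that exceptional case the moduli space is naturally parametrized by an open interval of $\mathbb{R}$, corresponding to deforming the pair of flat hypersurfaces inside their shared $2$-dimensional family. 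When $I\neq 2$, the same results force honest rigidity. Assembling these three cases, together with the already established rank bound, yields the statement.
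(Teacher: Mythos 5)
Your stratification by rank and the assignment of references to each stratum match the paper's Section \ref{seccion deformaciones en codim 2} exactly: Theorem 1 of \cite{DF3} for the rank bound, Theorem \ref{teorema introduccion} with $p=2$ for rank $3$, Theorem 1 of \cite{DFT} for rank $2$, and Theorem 33 of \cite{FF} together with Theorem \ref{deforamciones de las intersecciones en lorentz} for the intersection type. However, there is a genuine gap in your treatment of case (2). You write that Theorem 1 of \cite{DFT} ``identifies the space of honest deformations with $\mathcal{C}_h$'' and that case (2) is then immediate. It does not: that theorem predates the notion of honesty and claims that $\mathcal{C}_h$ parametrizes the \emph{genuine} deformations, which is false for every rank-two generic Sbrana--Cartan hypersurface, since compositions $\hat g=j\circ g$ of a codimension-one deformation $g$ with a nonflat isometric immersion $j:U\subseteq\mathbb{R}^{n+1}\to\mathbb{R}^{n+2}$ are generically genuine but do not appear in $\mathcal{C}_h$. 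The statement you actually need --- that for a non-intersection-type hypersurface every element of $\mathcal{C}_h$ is honest --- is precisely what the paper proves: one assumes a $\mathcal{C}_h$-deformation factors as $j\circ g$, rules out the elliptic case by flatness of $\alpha^j$, and in the hyperbolic case shows that the auxiliary function $\hat\varphi_{*}$ satisfies $\partial_u\hat{\varphi}_{*}=2\Gamma_{uv}^v\hat{\varphi}_{*}$ and $Q(\sqrt{|\hat{\varphi}_{*}|})=0$, which forces the Laplace-invariant identity (\ref{equacion que caracteriza las intersecciones}), i.e.\ intersection type. Your proposed ``only subtle point'' (that the intersection-type locus has empty interior) is not where the difficulty lies; without the computation above, case (2) is unproved.

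Two smaller remarks. In case (1), the correct reason that every genuine deformation is honest is Beez--Killing: a rank-$3$ hypersurface admits no noncongruent deformation in $\mathbb{R}^{n+1}$, so any composition $j\circ g'$ has $g'$ congruent to $f$ and hence extends isometrically, contradicting genuineness; your appeal to ``using the full normal bundle'' is not equivalent to the definition of honesty and should be replaced by this argument. Case (3) as you describe it is essentially the paper's argument and is fine, granting Theorem \ref{deforamciones de las intersecciones en lorentz}.
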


\text{ }

The study of conformally flat Euclidean submanifolds in codimension $2$, namely, submanifolds $M^n\subseteq\mathbb{R}^{n+2}$ which are conformally flat, is strongly linked to the Sbrana-Cartan theory. In fact, the description given in \cite{DF2} for such submanifolds is similar to the one given for deformable hypersurfaces, and some examples can be found using intersections of flat submanifolds in a similar way as for deformable hypersurfaces; see \cite{DF6}. However, in this case we must consider Riemannian hypersurfaces of the Lorentz space. This and the development of the proof of Theorem~\ref{teorema introduccion} led us to consider hypersurfaces and its genuine deformations in semi-Euclidean spaces.

It is therefore not surprising that the techniques developed in this work can be used also to study conformally flat submanifolds $g:M^n\rightarrow\mathbb{R}^{n+p+1}$. 
As proven in \cite{DF2}, if $p\leq n-4$, (locally) such manifolds $M^n$ can be obtained as the intersection of some Riemannian hypersurface $F:N^{n+1}\rightarrow\mathbb{R}_1^{n+2}$ with the light cone, and $N^{n+1}$ admits an isometric immersion $G:N^{n+1}\rightarrow\mathbb{R}^{n+p+1}$ such that $g=G|_{M^n}$. 
The hypersurface $F$ must have rank at most $(p+1)$. 
The following result characterizes such Riemannian hypersurfaces of rank $(p+1)$. 
This generalizes Theorem 5 of \cite{DF2} that deals with the case $p=1$. 
As before, the hypothesis of being generic is to discard the surface-like situation, and for the converse the deformations may be in some semi-Euclidean space $\mathbb{R}^{m+p}_{\mu}$.

\begin{thm}\label{teorema de hipersuperficies lorentzianas}
    Let $F:N^{m}\rightarrow \mathbb{R}_1^{m+1}$ be a Riemannian hypersurface of rank $(p+1)\geq2$. Then $N^{m}$ cannot be isometrically immersed in $\mathbb{R}^{m+q}$ for any $q<p$. Assume further that there exists an isometric immersion $G:N^m\rightarrow\mathbb{R}^{m+p}$. Then, the Gauss map $h$ of $F$ has a unique conjugate chart of the $k^{th}-$species for some $k\in\{1,\ldots,p+1\}$, and the support function $\gamma=\langle f,h\rangle$ also satisfies $Q(\gamma)=0$.

    Conversely, under the Gauss parametrization, $(h,\gamma)$ as above gives rise to an Riemannian hypersurface $F$ deformable in codimension $p$. Furthermore, if $N^{m}$ is generic, then $F$ is of $(p+1-k)^{th}$-type.
\end{thm}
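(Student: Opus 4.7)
The approach is to view this as the Lorentz-ambient analog of Theorem~\ref{teorema introduccion} and to reduce it to the semi-Euclidean generalization of that result developed elsewhere in the paper. The pair $(F, G)$ is treated as a genuine pair in a semi-Euclidean setting of total codimension $1+p$ and index~$1$, the unit index coming from the timelike direction in the ambient of $F$. Throughout one works on the conullity distribution $L^{p+1}$ of $F$, where the shape operator has rank $p+1$.

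For the rigidity statement, a hypothetical isometric immersion $G : N^m \to \mathbb{R}^{m+q}$ with $q < p$ yields, via the Gauss equations of $F$ and $G$, a flat bilinear pair $(\alpha^F, \alpha^G)$ on $L^{p+1}$ valued in a semi-Euclidean vector space of dimension $1+q$ and index~$1$. A Moore-type lemma in indefinite signature then bounds the dimension of the common nullity from below, in a way incompatible with $\mathrm{rank}(F) = p+1$, producing the contradiction.

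When $G$ exists in codimension exactly $p$, the same flat bilinear analysis produces a simultaneous diagonalization of all the shape operators. The common eigenframe is a conjugate chart for the Gauss map $h$, equivalently $Q(h) = 0$ for the associated DMZ system; the algebraic decomposition of flat bilinear forms in index~$1$ yields $k$ hyperbolic pairs, and this integer is the species. The inequality $k \geq 1$ is forced because the timelike direction coming from the Lorentzian ambient of $F$ cannot belong to any definite block of the decomposition. The identity $Q(\gamma) = 0$ for the support function $\gamma = \langle F, h \rangle$ follows from the Codazzi equation for $F$ written in the conjugate chart, exactly as in the classical Sbrana--Cartan argument. Conversely, given $(h, \gamma)$ with a $k^{th}$-species conjugate chart and $Q(\gamma) = 0$, the Gauss parametrization reconstructs $F : N^m \to \mathbb{R}_1^{m+1}$ of rank $p+1$, and the $k$ trivial-holonomy directions of the Sbrana bundle associated to $Q$ parametrize a $(p+1-k)$-dimensional moduli of admissible second fundamental forms; each integrates by the fundamental theorem of submanifolds in semi-Euclidean ambient to an isometric immersion $G : N^m \to \mathbb{R}^{m+p}_\mu$, where $\mu \leq p$ is read off from the signature splitting. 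Genericity of $N^m$ upgrades this moduli to a convex open subset of $\mathbb{R}^{p+1-k}$, so that $F$ is of $(p+1-k)^{th}$-type.

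The main obstacle is the careful bookkeeping of indices throughout. The Lorentzian ambient of $F$ contributes a persistent timelike direction to every admissible flat bilinear decomposition, and tracking how it redistributes under the hyperbolic-plane splitting is precisely what forces $k \geq 1$ in the forward direction and determines the correct $\mu$ in the converse. This requires a version of Moore's lemma for semi-Euclidean ambients strictly stronger than the one used for Theorem~\ref{teorema introduccion}, together with a verification that the Gauss parametrization in the converse produces a hypersurface of $\mathbb{R}_1^{m+1}$ itself and not of some other semi-Euclidean space.
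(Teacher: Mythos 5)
There is a genuine error at the heart of your plan: the signature count for the flat bilinear form is wrong, and everything downstream inherits the mistake. For the pair $(F,G)$ the relevant bundle is $W=T^\perp_G N\oplus T^\perp_F N$ with the metric $\langle\cdot,\cdot\rangle_{T^\perp_G N}-\langle\cdot,\cdot\rangle_{T^\perp_F N}$. Since $F:N^m\rightarrow\mathbb{R}^{m+1}_1$ is a \emph{Riemannian} hypersurface, its unit normal is timelike, so after the sign reversal that direction becomes \emph{spacelike} in $W$: the signature of $W$ is $(q+1,0)$, i.e.\ positive definite, not index $1$ as you assert. This is the single observation that drives the whole section of the paper. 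It immediately gives the rigidity statement (the Main Lemma, or Moore's theorem for definite forms, forces $\dim\Delta_\beta\geq m-q-1>m-p-1=\nu_F$ when $q<p$), and for $q=p$ it gives the simultaneous diagonalization of $\beta=\alpha^G\oplus\alpha^F$ directly from Moore's Theorem~2 in definite signature, \emph{without any genericity hypothesis}. Your route — an index-$1$ flat form plus a ``strictly stronger'' indefinite Moore-type lemma — is not only unnecessary but would force you to assume genericity (or some substitute) to diagonalize $\beta$, whereas the forward direction of the theorem assumes no such thing. So as written your argument proves a weaker statement, and the claimed need for a stronger lemma signals that the reduction has gone wrong rather than that new machinery is required.

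A second, independent problem is your identification of the species. In this paper $h$ is of the $k^{th}$-species when the trivial holonomy $\hat{\mathcal{N}}_p$ of the Sbrana bundle associated to the DMZ system $Q$ has rank $p+2-k$; it is a holonomy-theoretic invariant of the linear connection $d+\omega$ built from the Christoffel symbols, not the number of hyperbolic planes in an algebraic decomposition of a flat bilinear form. Consequently your mechanism for forcing $k\geq1$ (``the timelike direction cannot belong to any definite block'') is not the right one: $k\in\{1,\ldots,p+1\}$ holds simply because the trivial holonomy is a subbundle of a rank-$(p+1)$ bundle that is nonzero, being forced to contain the parallel section $\varphi=(\varphi_i)$ determined by $G$ through $\partial_i\varphi_j=2\Gamma_{ij}^j\varphi_j$. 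Finally, in the converse the index bookkeeping is reversed relative to Theorem~\ref{teorema introduccion}: here $\langle\eta_i,\eta_j\rangle=-(1+\delta_{ij}/\varphi_i)$, so $\mathcal{S}^*_\mu$ parametrizes deformations into $\mathbb{R}^{m+p}_{p-\mu}$, and landing in genuinely Euclidean ambient space pins down $\varphi_i\in(-1,0)$ for all $i$ (whence the convexity, hence a single convex component diffeomorphic to $\mathbb{R}^{p+1-k}$, for the $(p+1-k)^{th}$-type claim). Your phrase ``read off from the signature splitting'' glosses over exactly this reversal. The overall skeleton — diagonalize, obtain the conjugate chart and $Q(\gamma)=0$ via Codazzi and the Gauss parametrization, integrate the converse by the fundamental theorem — matches the paper, but the signature error and the misreading of ``species'' are substantive and would have to be corrected before the argument goes through.
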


In \cite{YoGD2}, the sequel of this paper, we will provide examples of the hypersurfaces described in this work using the intersection techniques developed in \cite{DFT2}. In addition, we will present an analogous result to Theorem \ref{teorema introduccion} classifying the genuine deformations of Euclidean hypersurfaces of rank $(p+1)$ in $\mathbb{R}^{n+p+1}$, generalizing Theorem~1 in \cite{DFT} to higher codimensions. 

There are several results in the literature which are described in terms of surfaces with conjugate charts, and in several of them this surface is the leaf space of some umbilical distribution of codimension 2; besides the ones already cited, see for example \cite{CThiperConf2}, \cite{DJV}, \cite{DT2},  \cite{DV}, \cite{DV2}, \cite{DV3}. We believe that some of those results can be extended to dimensions bigger that 2 using the tools developed in this paper.

This paper is organized as follows. In Section \ref{preliminaries} we recall the notions of genuine rigidity, Gauss parametrization, DMZ systems, among others. Section \ref{seccion descripcion del problema} is devoted to describe the rigidity problem and to prove Theorem \ref{teorema introduccion}. In Section \ref{seccion deformaciones en codim 2} we demonstrate Theorem \ref{Descripcion SC p=2 generica}, while in Section \ref{Seccion lorentzianas} we analyze the conformal case and prove Theorem~\ref{teorema de hipersuperficies lorentzianas}. We end our work with an Appendix with auxiliary results.

\text{ }

{\it Acknowledgment.} This work is a portion of the author's Ph.D. thesis at IMPA - Rio de Janeiro. The author would like to thank his adviser, Prof. Luis Florit for his orientation. 

\section{Preliminaries}\label{preliminaries}

Several of the tensors that we deal with in this work are more easily treatable in $(TM)_\mathbb{C}$,
the complexification of the tangent bundle of some manifold $M^n$. In order to do this, we need to establish some identifications.

Given a (finite dimensional) real vector space $\mathbb{W}$ we denote by $\mathbb{W}_\mathbb{C}=\mathbb{W}\otimes\mathbb{C}$ its complexification. Conversely, let $\mathbb{V}$ be a complex vector space with an antilinear map $C:\mathbb{V}\rightarrow \mathbb{V}$, that is, $C(\lambda v)=\overline{\lambda}C(v)$ for $\lambda\in\mathbb{C}$, satisfying $C^2=\text{Id}$. Define $\text{Re}(\mathbb{V})=\text{Re}_C(\mathbb{V})=\{v\in \mathbb{V}:Cv=v\}$ and $\text{Im}(\mathbb{V})=\{v\in \mathbb{V}:Cv=-v\}$. We have that $i:\text{Re}(\mathbb{V})\rightarrow\text{Im}(\mathbb{V})$, $i(v)=iv$ is a real isomorphism, so $\dim_\mathbb{R}(\text{Re}(\mathbb{V}))=\dim_\mathbb{C}(\mathbb{V})$, since $\mathbb{V}=\text{Re}(\mathbb{V})\oplus\text{Im}(\mathbb{V})$ as real vector spaces. The map $C$ is called a {\it conjugation map}. 
Notice that $\mathbb{W}_{\mathbb{C}}$ comes with its natural conjugation $v+iw\rightarrow\overline{v+iw}:=v-iw$ for $v,w\in\mathbb{W}$.

Consider a complex basis $\{e_i\}_{i\in I}$ of $\mathbb{W}_{\mathbb{C}}$ closed under the conjugation, that is, for any index $i\in I$ there is a unique index $\overline{i}\in I$ such that $\overline{e_i}=e_{\overline{i}}$. The $\mathbb{C}$-antilinear map defined by $C(e_i)=e_{\overline{i}}$ is the natural conjugation and satisfies that $\mathbb{W}=\text{Re}_C(\mathbb{W}_{\mathbb{C}})$. Hence any tensor in $\mathbb{W}_{\mathbb{C}}$ with the natural compatibility condition with respect to this basis automatically corresponds to a real tensor in $\mathbb{W}$. 

\subsection{Flat bilinear forms}
Given a bilinear map $\beta:\mathbb{V}\times \mathbb{U}\rightarrow \mathbb{W}$ between real vector spaces, set
\begin{equation*}
    \mathcal{S}(\beta)=\text{span}\{\beta(X,Y):X\in \mathbb{V}, Y\in \mathbb{U}\}\subseteq \mathbb{W}.
\end{equation*}
The (left) \emph{nullity} of $\beta$ is the vector subspace
\begin{equation*}
    \Delta_\beta=\mathcal{N}(\beta)=\{X\in \mathbb{V}:\beta(X,Y)=0\, ,\,\forall Y\in \mathbb{U}\}\subseteq \mathbb{V}.
\end{equation*}
For each $Y\in \mathbb{U}$ we denote by $\beta^Y:\mathbb{V}\rightarrow \mathbb{W}$ the linear map $\beta^Y(X)=\beta(X,Y)$. Let
\begin{equation*}
    \text{Re}(\beta)=\{Y\in \mathbb{U}:\dim(\text{Im}(\beta^Y))\text{ is maximal}\}
\end{equation*}
be the set of (right) \emph{regular elements of} $\beta$, and set $i(\beta):=\dim(\text{Im}(\beta^Y))$ for any $Y\in\text{Re}(\beta)$. The set of regular elements is open and dense in $\mathbb{V}$. There are obvious definitions for left regular elements and right nullity.

Assume now that $\mathbb{W}$ has a non-degenerate inner product $\langle\cdot,\cdot\rangle:\mathbb{W}\times \mathbb{W}\rightarrow\mathbb{R}$. We denote $\mathbb{W}^{p,q}$ to point out that the inner product in $\mathbb{W}$ has signature $(p,q)$. We say that $\beta$ is $\mathit{flat}$ if
\begin{equation*}
    \langle\beta(X,Y),\beta(Z,W)\rangle=\langle\beta(X,W),\beta(Z,Y)\rangle\quad\forall X,Z\in \mathbb{V}\quad \forall Y,W\in \mathbb{U}.
\end{equation*}

For a symmetric bilinear map $\beta:\mathbb{V}\times\mathbb{V}\rightarrow\mathbb{W}$, we say that $\beta$ {\it diagonalizes} if there exists a basis $\{X_i\}_{i}$ of $\mathbb{V}_\mathbb{C}$ such that $\{X_i\}_{i}=\{\overline{X_i}\}_{i}$ and $\beta(X_i,X_j)=0$ for all $i\neq j$, where we are extending $\beta$ by $\mathbb{C}$-bilinearity $\beta:\mathbb{V}_\mathbb{C}\times \mathbb{V}_\mathbb{C}\rightarrow \mathbb{W}_\mathbb{C}$. We denote $\overline{j}$ the index such that $\overline{X_j}=X_{\overline{j}}$. 

There are two results that we need in order to bound the dimension of the nullity of a flat bilinear form. The first one due to Moore \cite{Moo} is valid for non-necessarily symmetric ones.
\begin{lema}\label{nulidad para no simetrica}
    Let $\beta:\mathbb{V}\times \mathbb{U}\rightarrow \mathbb{W}$ be a flat bilinear form. If $X\in \mathbb{U}$ is a right regular element, then
    \begin{equation*}
        \mathcal{S}(\beta|_{\ker(\beta^X)\times \mathbb{U}})\subseteq \beta^X(\mathbb{V})\cap\beta^X(\mathbb{V})^\perp.
    \end{equation*}
    In particular, if $\beta^X(\mathbb{V})$ is non-degenerate then $\Delta_\beta=\ker(\beta^X)$ and
    \begin{equation*}
        \dim(\Delta_\beta)\geq \dim(\mathbb{V})-\dim(\text{Im}(\beta^X)).
    \end{equation*}
\end{lema}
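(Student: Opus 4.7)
The statement asks for two containments: first the pointwise inclusion
$\mathcal{S}(\beta|_{\ker(\beta^X)\times\mathbb{U}}) \subseteq \beta^X(\mathbb{V})\cap \beta^X(\mathbb{V})^\perp$, and second the ``in particular'' consequence when $\beta^X(\mathbb{V})$ is non-degenerate. My plan is to split the containment into the perpendicularity and the image parts, proving them separately.

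The perpendicularity $\beta(Z,Y) \in \beta^X(\mathbb{V})^\perp$ for $Z \in \ker(\beta^X)$ and $Y \in \mathbb{U}$ is the easy half: for any $V \in \mathbb{V}$, flatness yields
\[
\langle \beta(Z,Y),\beta(V,X)\rangle = \langle \beta(Z,X),\beta(V,Y)\rangle = 0
\]
since $\beta(Z,X)=\beta^X(Z)=0$. No regularity hypothesis is used here.

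The inclusion $\beta(Z,Y) \in \beta^X(\mathbb{V})$ is where regularity of $X$ must enter, and this is the main technical step. I would use a deformation argument along the one-parameter family $X+\epsilon Y$. Since $X$ is right regular, $\dim \mathrm{Im}(\beta^X) = i(\beta)$ is maximal, and by lower semi-continuity of rank combined with maximality, $\dim\mathrm{Im}(\beta^{X+\epsilon Y}) = i(\beta)$ for all $\epsilon$ in a neighborhood of $0$. Hence the kernels $K_\epsilon := \ker(\beta^{X+\epsilon Y})$ have constant dimension $\dim \mathbb{V} - i(\beta)$ and form a smooth family: fixing a linear complement $W$ of $K:=K_0$ in $\mathbb{V}$, one checks that $\beta^{X+\epsilon Y}|_W$ remains injective and surjects onto $\mathrm{Im}(\beta^{X+\epsilon Y})$ for $\epsilon$ small, so every $Z \in K$ admits a smooth extension $Z_\epsilon \in K_\epsilon$ with $Z_0 = Z$ (obtained by correcting $Z$ by the unique $w(\epsilon)\in W$ with $\beta^{X+\epsilon Y}(w(\epsilon))=\beta^{X+\epsilon Y}(Z)$). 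Differentiating $\beta(Z_\epsilon,X+\epsilon Y)=0$ at $\epsilon=0$ produces $\beta(\dot Z_0,X)+\beta(Z,Y)=0$, i.e. $\beta(Z,Y) = -\beta^X(\dot Z_0) \in \beta^X(\mathbb{V})$, as desired.

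Once the containment is established, the ``in particular'' statement is formal. Non-degeneracy of $\beta^X(\mathbb{V})$ forces $\beta^X(\mathbb{V})\cap \beta^X(\mathbb{V})^\perp=\{0\}$, so $\beta(Z,Y)=0$ for every $Z\in\ker(\beta^X)$ and every $Y\in\mathbb{U}$, which gives $\ker(\beta^X)\subseteq \Delta_\beta$; the reverse inclusion is trivial, so $\Delta_\beta=\ker(\beta^X)$ and the dimension inequality is just the rank–nullity theorem applied to $\beta^X$. The main obstacle I expect is the smooth family construction: specifically, verifying rigorously that the rank of $\beta^{X+\epsilon Y}$ is constant near $0$ and that this yields a smooth section $Z_\epsilon$ through any prescribed $Z\in K$, rather than merely a set-theoretic family of kernels.
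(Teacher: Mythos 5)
Your proof is correct. Note that the paper does not prove this lemma at all: it is stated as a known preliminary and attributed to Moore \cite{Moo}, so there is no in-paper argument to compare against. Your argument is, in substance, the standard perturbation proof: the easy half (orthogonality) via flatness and $\beta^X(Z)=0$ is exactly right, and the hard half rests, as it must, on the rank-constancy of $\beta^{X+\epsilon Y}$ near $\epsilon=0$, which you correctly derive from maximality plus lower semicontinuity of rank. The only stylistic difference from the usual write-up is how the limit is taken: the classical version observes that $\beta(Z,Y)=\epsilon^{-1}\beta^{X+\epsilon Y}(Z)\in \operatorname{Im}(\beta^{X+\epsilon Y})$ for $\epsilon\neq 0$ and then lets $\epsilon\to 0$, using that the images $\beta^{X+\epsilon Y}(W)$ converge to $\beta^X(W)=\operatorname{Im}(\beta^X)$ in the Grassmannian; you instead build a smooth curve $Z_\epsilon=Z-w(\epsilon)$ in the kernels and differentiate $\beta(Z_\epsilon,X+\epsilon Y)=0$ at $\epsilon=0$. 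Both hinge on the same fact, and your construction of $w(\epsilon)$ is sound and completable as sketched (e.g.\ via Cramer's rule against a fixed projection making $\beta^X|_W$ invertible, which makes $w(\epsilon)$ rational in $\epsilon$). The ``in particular'' part is handled correctly. No gaps.
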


The second result proved in \cite{DF4} is only valid for symmetric flat bilinear forms and is called the Main Lemma in the literature.
\begin{lema}[Main Lemma]\label{Main lemma}
    Let $\beta:\mathbb{V}^n\times \mathbb{V}^n\rightarrow \mathbb{W}^{p,q}$ be a flat symmetric bilinear form such that $\mathcal{S}(\beta)=\mathbb{W}^{p,q}$. If $\min\{p,q\}\leq 5$ then
    \begin{equation*}
        \dim(\Delta_\beta)\geq n-p-q.
    \end{equation*}
\end{lema}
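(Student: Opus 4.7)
The plan is to induct on $s := \min\{p,q\}$, using Lemma \ref{nulidad para no simetrica} to close the base case and reducing the ambient signature by $(1,1)$ at each step. For $s = 0$, say $q = 0$, so $\mathbb{W}$ is positive definite. Any right-regular $Y \in \mathrm{Re}(\beta)$ then has non-degenerate image $\beta^Y(\mathbb{V})$, since a definite space has no non-zero isotropic vectors, and Lemma \ref{nulidad para no simetrica} immediately gives $\Delta_\beta = \ker(\beta^Y)$ and $\dim \Delta_\beta \geq n - i(\beta) \geq n - p - q$.

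For the inductive step $s \geq 1$, I would first dispose of the case where some right-regular $Y$ has non-degenerate $\beta^Y(\mathbb{V})$ exactly as in the base case. Otherwise, pick a null vector $\xi \in \beta^Y(\mathbb{V}) \cap \beta^Y(\mathbb{V})^\perp$ and use $\mathcal{S}(\beta) = \mathbb{W}$ to produce a null conjugate $\eta \in \mathbb{W}$ with $\langle \xi, \eta \rangle = 1$ (starting from any $\eta_0$ with $\langle \xi, \eta_0 \rangle \neq 0$, subtract a suitable multiple of $\xi$ to kill $\langle \eta_0, \eta_0 \rangle$ and then rescale). The hyperbolic plane $\mathbb{L} := \mathrm{span}\{\xi, \eta\}$ has signature $(1,1)$, and its orthogonal complement $\mathbb{W}'$ has signature $(p-1, q-1)$. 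The goal is then to produce, out of $\beta$, a flat bilinear form valued in a subspace of $\mathbb{W}'$, apply the inductive hypothesis to get nullity at least $n - (p+q) + 2$ there, and argue that passing back to $\Delta_\beta$ costs at most $2$ dimensions.

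The crux — and my expected main obstacle — is that the naive projection $\beta' := \pi_{\mathbb{W}'} \circ \beta$ is \emph{not} automatically flat: writing $\beta = a\,\xi + b\,\eta + \beta'$, flatness of $\beta'$ is equivalent to flatness of the $\mathbb{L}$-valued part $(a,b)$, a non-trivial quadratic identity in $a$ and $b$ that must be forced from flatness of $\beta$ by a careful choice of the null pair $(\xi, \eta)$ adapted to the degeneracy locus of $\beta^Y$. This is where the hypothesis $\min\{p,q\} \leq 5$ genuinely intervenes: the known arguments control the interaction between the null image of $\beta$ and the flatness identity only within this signature range, and beyond it the inductive reduction is not known to close. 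Once a bona fide reduced flat form is produced, the comparison between its nullity and $\Delta_\beta$ is essentially controlled by $\dim \mathbb{L} = 2$, yielding $\dim \Delta_\beta \geq n - p - q$ and closing the induction.
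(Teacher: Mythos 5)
This lemma is not proved in the paper at all: it is quoted from \cite{DF4} (with the caveat, stated right after it, that the published proof itself has a gap at $\min\{p,q\}=6$ where counterexamples exist). So the only thing to assess is whether your argument stands on its own, and it does not: there is a genuine gap, and you have in effect flagged it yourself. The entire content of the Main Lemma is concentrated in the step you defer to ``the known arguments'' --- namely, showing that after splitting off a hyperbolic plane $\mathbb{L}=\operatorname{span}\{\xi,\eta\}$ one can actually produce a flat form with values in $\mathbb{W}'=\mathbb{L}^\perp$ to which the inductive hypothesis applies. Writing $\beta=a\,\xi+b\,\eta+\beta'$, flatness of $\beta'$ requires the identity $a(X,Y)b(Z,W)+b(X,Y)a(Z,W)=a(X,W)b(Z,Y)+b(X,W)a(Z,Y)$, and no choice of null pair adapted to the degeneracy of $\beta^Y$ is exhibited that forces it. Asserting that this is ``where $\min\{p,q\}\le 5$ genuinely intervenes'' names the difficulty without resolving it; as written, the induction does not close.

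Two further steps are also unjustified. First, the inductive hypothesis requires $\mathcal{S}(\beta')=\mathbb{W}'$, which need not hold after projection; one must pass to $\mathcal{S}(\beta')$ and track its (possibly degenerate) induced signature. Second, the claim that returning from $\Delta_{\beta'}$ to $\Delta_\beta$ ``costs at most $2$'' dimensions is itself a nullity estimate for the $\mathbb{L}$-valued flat form $a\,\xi+b\,\eta$ restricted to $\Delta_{\beta'}\times\mathbb{V}$: since $\mathbb{L}$ has signature $(1,1)$, the image of a regular element may be a null line, so Lemma~\ref{nulidad para no simetrica} does not directly apply, and at the level $\min\{p,q\}=1$ this auxiliary estimate is the very statement being proved, making the argument circular there. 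Your base case $s=0$ is fine, but the rest is an outline of a strategy rather than a proof; the actual arguments in the literature proceed by a delicate case analysis on the dimension of the isotropic subspace $\beta^Y(\mathbb{V})\cap\beta^Y(\mathbb{V})^\perp$, which is precisely the analysis that breaks down at dimension $6$.
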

We point out that the proof given in \cite{DF4} has a gap for $\min\{p,q\}=6$, in which case there are counterexamples as shown in \cite{DF5}. The correct statement for this case was given in \cite{DF3}.
\subsection{Genuine rigidity}\label{pares de inmersiones}

In this subsection we recall the notion of genuine rigidity which naturally extends the one of isometric rigidity, and that is more adequate to study deformation of hypersurfaces in higher codimensions.

\text{ }

Given a Riemannian manifold $M^n$ and $x\in M^n$, the {\it nullity of} $M^n$ {\it at} $x$ is the nullity of the curvature tensor $R$ of $M^n$, that is, the subspace of $T_xM$ given by
\begin{equation*}
    \Gamma(x)=\mathcal{N}(R_x)=\{X\in T_xM: R(X,Y)Z=0,\forall Y,Z\in T_xM\}.
\end{equation*}
The {\it rank of} $M^n$ {\it at} $x$ is defined by $n-\mu$, where $\mu=\dim(\Gamma(x))$. As the results that we are looking for are of local nature and our subspaces are all either kernels or images of smooth tensor fields, we will always  work on each connected component of an open dense subset of $M^n$ where all these dimensions are constant and thus all the subbundles are smooth without further notice. In particular, we assume that $\mu$ is constant and hence the second Bianchi identity implies that $\Gamma$ is a totally geodesic distribution, namely, $\nabla_{\Gamma}\Gamma\subseteq\Gamma$.

For an isometric immersion $f:M^n\rightarrow\mathbb{R}^{n+q}$ we denote by $\alpha^f:TM\times TM\rightarrow T^\perp _fM$ its second fundamental form. We define the $\mathit{relative}$ \emph{nullity of} $f$ \emph{at} $x$ as $\Delta_f(x):=\mathcal{N}(\alpha^f_x)$ and the \emph{rank of} $f$ as $n-\nu_f$, where $\nu_f=\dim(\Delta_f)$. Notice that if $f$ is a hypersurface then $\mu=\nu_f\leq n-2$ outside of the flat points of $M^n$. 

Given two isometric immersions $f:M^n\rightarrow\mathbb{R}^{n+q}$ and $g:M^n\rightarrow\mathbb{R}^{n+p}$, it is useful to work with the vector bundle $W=T^\perp_g M\oplus T^\perp_f M$, in which we define the semi-Riemannian metric with signature $(p,q)$ given by
\begin{equation*}
    \langle(\xi_1,\eta_1),(\xi_2,\eta_2)\rangle=\langle\xi_1,\xi_2\rangle_{T^{\perp}_gM}-\langle\xi_1,\xi_2\rangle_{T^{\perp}_fM}.
\end{equation*}
The bilinear tensor $\beta=(\alpha^g,\alpha^f):TM\times TM\rightarrow W$ is flat with respect to this metric by the Gauss equations of $f$ and $g$.
We also have the compatible connection in $W$ induced by the normal connections $\hat{\nabla}:=(\nabla^{\perp,g},\nabla^{\perp,f})$.
From the Codazzi equations for $f$ and $g$, $\beta$ is a Codazzi tensor, i.e. it satisfies
\begin{equation*}
    (\hat{\nabla}_X\beta)(Y,Z)=(\hat{\nabla}_Y\beta)(X,Z),\quad\forall X,Y,Z\in TM.
\end{equation*}
In particular, if in the above equation we take $X,Z\in\Delta_{\beta}=\Delta_f\cap \Delta_g$, we conclude that $\nabla_X Z\in\Delta_\beta$, that is, $\Delta_\beta$ is integrable and totally geodesic. 

We say that the pair $\{f,g\}$ {\it extends isometrically} if there exists a Riemannian manifold $N^{n+r}$, an isometric embedding $j:M^n\rightarrow N^{n+r}$ and two isometric immersions $F:N^{n+r}\rightarrow\mathbb{R}^{n+q}$, $G:N^{n+r}\rightarrow\mathbb{R}^{n+q}$ such that $f=F\circ j$ and $g=G\circ j$. That is, the following diagram commutes:
\begin{center}
    \begin{tikzcd}[ arrows={-stealth}]
        & & \mathbb{R}^{n+p}\\
        \quad\quad M^n \arrow{urr}{g}\arrow[swap]{drr}{f}     \arrow[hook]{r}{j} %
        & N^{n+r} \arrow[swap]{ur}{G}\arrow{dr}{F} \\
        & & \mathbb{R}^{n+q}
    \end{tikzcd}
\end{center}
We say that the pair $\{f,g\}$ is {\it genuine}, or that $g$ is a {\it genuine deformation of $f$} when $f$ is fixed, if there is no open subset $U\subseteq M$ such that $\{f|_U,g|_U\}$ extends isometrically. An isometric immersion $f:M^n\rightarrow\mathbb{R}^{n+q}$ is said to be {\it genuinely rigid} in $\mathbb{R}^{n+p}$ if there is no open subset $U\subseteq M^n$ such that $f|_U$ admits a genuine deformation in $\mathbb{R}^{n+p}$.
If that is not the case, we say that $f$ is {\it genuinely deformable} in $\mathbb{R}^{n+p}$.
In particular, when $f$ is a hypersurface, that $g:M^n\rightarrow\mathbb{R}^{n+p}$ is a genuine deformation of $f$ means that there is no open subset $U\subseteq M^n$ such that $g|_U=h\circ f|_U$, where $h:V\subseteq\mathbb{R}^{n+1}\rightarrow\mathbb{R}^{n+p}$ is some isometric immersion of an open subset $V$ with $f(U)\subseteq V$. 

We say that $f:M^n\rightarrow\mathbb{R}^{n+q}$ is $R^d${\it -ruled} (or $d${\it-ruled}), if $R^d\subseteq TM$ is a $d$-dimensional totally geodesic distribution whose leaves are mapped by $f$ to (open subsesets of) affine subspaces of $\mathbb{R}^{n+q}$. 
Theorem $1$ of \cite{DF3} says that a genuine pair $f:M^n\rightarrow\mathbb{R}^{n+q}$ and $g:M^n\rightarrow\mathbb{R}^{n+p}$ with $\min\{p,q\}\leq 6$ must be mutually $R^d$-ruled, this ruling contains $\Delta_\beta$, and it gives a sharp estimate for $d$. 

Let $(p+1)$ be the rank of a nowhere flat hypersurface $f:M^n\rightarrow\mathbb{R}^{n+1}$. Theorem $1$ of \cite{DF3} shows that, if $f$ is not $(n-p+3)$-ruled then $f$ is genuinely rigid in $\mathbb{R}^{n+q}$ for all $q<p$. Notice that the condition of not being $(n-p+3)$-ruled is trivially satisfied for $p\leq 6$ by the following elementary fact.
\begin{lema}\label{AR,R=0}
    Let $A:\mathbb{R}^n\rightarrow\mathbb{R}^n$ be a linear and symmetric map with respect to the Euclidean inner product. If there exists a $d$-dimensional subspace $R\subseteq\mathbb{R}^n$ such that $\langle A(R),R\rangle=0$, then $rank(A)\leq 2(n-d)$.
\end{lema}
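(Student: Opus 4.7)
The plan is a short dimension count, splitting $\mathbb{R}^n = R \oplus R^\perp$ and estimating the image of $A$ on each summand separately. The first step is a reinterpretation of the hypothesis: since a vector is orthogonal to every element of $R$ if and only if it lies in $R^\perp$, the condition $\langle A(R),R\rangle = 0$ is nothing but the containment $A(R) \subseteq R^\perp$. It is worth flagging that this reformulation does not use the symmetry of $A$; symmetry is natural in the geometric applications (where $A$ will be a shape operator), but does not appear to be needed for the rank bound itself.

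The second step is the dimension count. Since $\mathrm{Im}(A) = A(R) + A(R^\perp)$, subadditivity of dimension gives
\[
\mathrm{rank}(A) \;=\; \dim\bigl(A(R) + A(R^\perp)\bigr) \;\leq\; \dim A(R) + \dim A(R^\perp).
\]
The first summand satisfies $\dim A(R) \leq \dim R^\perp = n - d$ by the containment just established, while the second is bounded by $\dim A(R^\perp) \leq \dim R^\perp = n - d$ simply because $R^\perp$ itself has dimension $n - d$. Adding these yields the desired inequality $\mathrm{rank}(A) \leq 2(n-d)$.

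I do not anticipate any real obstacle: the entire argument amounts to one observation plus a dimension count, and the symmetry hypothesis is essentially decorative for this statement. The bound is sharp already in the minimal case $n = 2$, $d = 1$, with $A e_1 = e_2$ and $Ae_2$ having an arbitrary second coordinate, which suggests that no finer use of the symmetry of $A$ could improve the estimate.
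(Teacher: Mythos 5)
Your argument is correct. The paper states this lemma as an ``elementary fact'' and gives no proof at all, so there is no approach to compare against; your two-step argument --- reading $\langle A(R),R\rangle=0$ as $A(R)\subseteq R^\perp$ and then bounding $\operatorname{rank}(A)\le\dim A(R)+\dim A(R^\perp)\le 2(n-d)$ --- is a clean and complete justification. Your side remark is also accurate: symmetry of $A$ plays no role in this estimate (an equivalent symmetry-free route is to note that $A|_R:R\to R^\perp$ has kernel of dimension at least $2d-n$, which sits inside $\ker A$), and your $2\times 2$ example confirms the bound is sharp.
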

Therefore it is natural to study genuine deformations of hypersurfaces of rank $(p+1)$ in $\mathbb{R}^{n+p}$.
Consider thus $g:M^n\rightarrow\mathbb{R}^{n+p}$ a genuine deformation of such an $f:M^n\rightarrow\mathbb{R}^{n+1}$. Let $\beta=\alpha^g\oplus\alpha^f$ and assume that $\mathcal{S}(\beta)$ is non-degenerate (this will be our case by Proposition \ref{S(beta) definido si no es reglada}). By the Main Lemma we have
$$n-p-1\leq n-\dim S(\beta)\leq \dim(\Delta_\beta)\leq\nu_f=n-p-1.$$ 
Hence, $S(\beta)=W^{p,1}$ and $\Delta_\beta=\Delta_f=\Gamma$. In particular, $\Delta_g\subseteq\Gamma=\Delta_\beta\subseteq\Delta_g$. We conclude that
\begin{equation*}
    \Delta_f=\Delta_g=\Delta_\beta=\Gamma.
\end{equation*}

All the definitions of this subsection have their natural extensions to the semi-Riemannian context, and we will use them without further mention.

\subsection{The Gauss parametrization}\label{seccion parametrizacion de gauss}
An important step in our approach to characterize genuine deformations of hypersurfaces of rank $(p+1)$ is to reduce the problem to the quotient space of nullity leaves $\pi:M^n\rightarrow L^{p+1}=M/\Gamma$. Once this is done, we obtain a classification of the hypersurfaces themselves by means of the Gauss parametrization that we describe next. For a more detailed description see \cite{DG}. 

\text{ }

Let $f:M^n\rightarrow\mathbb{R}^{n+1}$ be an orientable Euclidean hypersurface with constant relative nullity $\nu_f$. If $\rho:M^n\rightarrow\mathbb{S}^n$ is the Gauss map of $f$, then $\rho$ is constant along the leaves of $\Delta_f$. Hence, there is $h:L=M/\Delta_f\rightarrow \mathbb{S}^n$, such that $\rho=h\circ\pi$. This map $h$ is in fact an immersion, so we always consider on $L$ the metric induced by $h$. To give a complete local description of $f$ in terms of $h$ it is necessary to consider also its {\it support function} $\gamma:L\rightarrow\mathbb{R}$, which is defined by $\gamma\circ\pi=\langle f,\rho\rangle$. From $h$ and $\gamma$ we can recover $f(M^n)$ locally using the {\it Gauss parametrization} given by $\psi:T^\perp_hL\rightarrow\mathbb{R}^{n+1}$,
\begin{equation}\label{eq:parametrizacion de gauss}
    \psi(x,w)=(\gamma h+\nabla\gamma)(x)+w.
\end{equation}
We also denote the Gauss parametrization of $f$ simply by $(h,\gamma)$.
This useful tool was introduced by Sbrana in \cite{Sb} precisely to study rigidity of hypersurfaces of rank $2$, but since then it has had several applications in other contexts.

In particular, using the Gauss parametrization we have a local description of all flat hypersurfaces $f:M^n\rightarrow\mathbb{R}^{n+1}$. By the Gauss equation, the rank of $f$ is at most one. If $\nu_f=n$ then $f(M)$ is an open subset of some affine hyperplane. If $\nu_f=n-1$, then $f(M)$ can be (locally) described with a regular curve $h(s)$ in $\mathbb{S}^n$ and a real function $\gamma(s)$. A deeper analysis can be done to classify flat hypersurfaces in codimension two by means of a different parametrization. This was recently fully understood in Corollary 18 of \cite{FF}, and partially earlier in Theorem 13 of \cite{DF2}. In \cite{YoFlatE} they prove an analogous result for generic Euclidean flat submanifolds $M^n\subseteq \mathbb{R}^{n+p}$ and $p\leq n$.

\subsection{The Sbrana-Cartan classification}\label{subseccion SB clasificacion}
The Sbrana-Cartan classification gives a local description of all hypersurfaces of $f:M^n\rightarrow\mathbb{R}^{n+1}$ which possess genuine (namely, non-congruent) deformations in $\mathbb{R}^{n+1}$. To recall it we need a few definitions and results. 

\text{ }

By the classical Beez-Killing rigidity theorem, in order for $f:M^n\rightarrow\mathbb{R}^{n+1}$ to have a genuine deformation in $\mathbb{R}^{n+1}$ it must have rank at most $2$ everywhere. If the rank of $f$ is $1$ or $0$, then $M^n$ is flat and, as seen above, its genuine deformations can be easily understood by means of the Gauss parametrization. Hence, the interesting cases are among hypersurfaces of rank $2$. 

\begin{defn}
    \normalfont A hypersurface $f:M^n\rightarrow\mathbb{R}^{n+1}$ is called {\it surface-like} if there exists a surface $L^2\subseteq\mathbb{R}^3$ (resp. $L^2\subseteq\mathbb{S}^3$) such that $f(M^n)\subseteq L^2\times\mathbb{R}^{n-2}\subseteq\mathbb{R}^3\times\mathbb{R}^{n-2}$ (resp. $f(M^n)\subseteq C(L^2)\times\mathbb{R}^{n-3}\subseteq\mathbb{R}^4\times\mathbb{R}^{n-3}$ where $C(L^2)$ is the radial cone obtained from $L^2\subseteq\mathbb{S}^3$).
\end{defn}

In the Sbrana-Cartan classification the family of surface-like hypersurfaces is the first one among rank $2$ hypersurfaces which have genuine deformations.
Moreover, if $f$ as above is surface-like, then any genuine deformation of $f$ is given by a genuine deformation of $L^2$ in $\mathbb{R}^3$ (resp. in $\mathbb{S}^3$). However, a complete classification of the genuine deformations of surfaces is currently out of reach. 

The second family of genuinely deformable hypersurfaces of rank $2$ is that of $(n-1)$-ruled ones. It turns out that they all are highly deformable, any deformation preserves the rulings and the moduli space of genuine deformations is easily seen to be the set of smooth functions of one variable.

In order to describe the remaining  deformable hypersurfaces we need to recall some definitions.

\begin{defn}
    \normalfont Given a surface $h:L^2\rightarrow\mathbb{S}^n$, we call a coordinate system $(u,v)\in\mathbb{R}^2$ {\it real conjugate} if its second fundamental form satisfies $\alpha^h(\partial_u,\partial_v)=0$. Similarly, a coordinate system $z\in\mathbb{C}$ is called {\it complex conjugate} if $\alpha^h(\partial_u,\partial_{v})=0$, where $u=z=\overline{v}$. Accordingly, we say that $h$ is of {\it real} (resp. {\it complex}) {\it type}.
\end{defn}

Given a surface $h:L^2\rightarrow\mathbb{S}^n$ with a real (resp. complex) conjugate system $(u,v)$ and $\Gamma_{vu}^u, \Gamma_{uv}^v$ its Christoffel symbols, assume that the following system of PDE
\begin{equation}\label{sistema de edp Sbana cartan clasico}
    \left\{ \begin{array}{lr}
        \partial_u\tau=2\Gamma_{uv}^v\tau(1-\tau)\\
        \partial_v\tau=2\Gamma_{vu}^u(1-\tau),
    \end{array}\right.
\end{equation}
has a solution $\tau:L^2\rightarrow\mathbb{R}$ (resp. $\tau:L^2\rightarrow\mathbb{S}^1\subseteq\mathbb{C}$) other than the trivial one $\tau\equiv 1$. The integrability condition of this system is
\begin{equation}
    (\partial_v\Gamma_{uv}^v-2\Gamma_{uv}^v\Gamma_{vu}^v)\tau=\partial_u\Gamma_{vu}^u-2\Gamma_{uv}^u\Gamma_{vu}^v.
\end{equation}
Then $h$ is called of {\it first species} if the above equation is trivially satisfied, that is,
\begin{equation}
    \partial_u\Gamma_{vu}^u=2\Gamma_{uv}^u\Gamma_{vu}^v=\partial_v\Gamma_{uv}^v.
\end{equation}
We say that $h$ is of {\it second species} if $\partial_v\Gamma_{uv}^v\neq 2\Gamma_{uv}^v\Gamma_{vu}^v$, $\partial_u\Gamma_{vu}^u\neq 2\Gamma_{uv}^v\Gamma_{vu}^v$ and
\begin{equation}\label{solucion segunda especie sbrana cartan clasico}
    \tau=\frac{\partial_v\Gamma_{uv}^v-2\Gamma_{uv}^u\Gamma_{vu}^v}{\partial_u\Gamma_{vu}^u-2\Gamma_{uv}^u\Gamma_{vu}^v}\neq 1 
\end{equation}
is the necessarily unique solution of (\ref{sistema de edp Sbana cartan clasico}). For the real case, we also require that $\tau$ is positive.

\begin{thm}[Sbrana \cite{Sb}, Cartan \cite{Ca}]
    Let $f:M^n\rightarrow\mathbb{R}^{n+1}$ be a genuinely deformable hypersurface of rank $2$. Assume further that $f$ is nowhere surface-like nor $(n-1)$-ruled. Then, along connected components of an open dense subset, its Gauss map $h:L^2\rightarrow\mathbb{S}^n$ is of first or second species, and, with respect to its conjugate coordinate system, the support function satisfies 
    \begin{equation*}
        \partial^2_{uv}\gamma-\Gamma_{vu}^u\partial_u\gamma-\Gamma_{uv}^v\partial_v\gamma+\gamma g_{uv}=0.
    \end{equation*}
    If $h$ is of first species, then the moduli space of genuine deformations of $f$ is naturally parametrized by the positive initial conditions for $\tau$ solving (\ref{sistema de edp Sbana cartan clasico}). This set is $\mathbb{R}_{>0}\setminus\{1\}\cong\mathbb{R}\setminus\{0\}$ for the real type, while $\mathbb{S}^{1}\setminus\{1\}\cong\mathbb{R}$ for the complex type. If $h$ is of second species, the hypersurface $f$ has a unique genuine deformation.
\end{thm}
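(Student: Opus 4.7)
The plan is to reduce the problem, via the Gauss parametrization, to an algebraic/PDE question on the 2-dimensional quotient surface $L^2 = M/\Gamma$, and then to diagonalize the pair of shape operators simultaneously to extract the conjugate chart. Let $g:M^n\rightarrow\mathbb{R}^{n+1}$ be a genuine deformation of $f$ and form the flat symmetric bilinear form $\beta = (\alpha^g, \alpha^f)$ valued in $W^{1,1}$. The hypotheses that $f$ is neither surface-like nor $(n-1)$-ruled are what prevent $\mathcal{S}(\beta)$ from degenerating (being null or $1$-dimensional); granted this, the Main Lemma gives $\dim \Delta_\beta \geq n-2$, and since $\mathrm{rank}\, f = 2$ forces $\nu_f = n-2$, we conclude (as already observed in the preliminaries for the higher-codimension case) that $\Delta_f = \Delta_g = \Delta_\beta = \Gamma$. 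The Gauss parametrization then writes $f$ as $(h,\gamma)$ on $L^2$ and, using that the nullity leaves coincide, the same $L^2$ carries the Gauss data $(h_g,\gamma_g)$ of $g$.

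The algebraic core is fibrewise on $T_xL \cong \Gamma^\perp$. The restrictions $A_f|_{\Gamma^\perp}, A_g|_{\Gamma^\perp}$ are nondegenerate symmetric operators on a plane, and the Gauss equation forces $\det A_f = \det A_g$ (both equal the sectional curvature). Define $P := A_f^{-1} A_g$; it is $A_f$-self-adjoint with $\det P = 1$, and $P \not\equiv \mathrm{Id}$ (otherwise $g$ would be locally congruent to $f$, contradicting genuineness). Its eigenvalues form a pair $\{\tau, 1/\tau\}$ with $\tau \neq 1$, either real and positive (after fixing orientations) or complex-conjugate on the unit circle. Diagonalizing $P$ over $\mathbb{R}$ or $\mathbb{C}$ produces a basis $\{\partial_u, \partial_v\}$ of (possibly complexified) $T_xL$ that is simultaneously $\alpha^f$- and $\alpha^g$-orthogonal; projecting via $h$ this is exactly a real or complex conjugate chart on the Gauss image, and the ``first vs.~second species'' dichotomy is what will emerge from the PDE for $\tau$.

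Next I would translate the Codazzi equation for $\beta$ (equivalently, the Codazzi identity $(\nabla_X P)Y = (\nabla_Y P)X$ coming from Codazzi for $\alpha^f$ and $\alpha^g$) into a scalar PDE on $\tau$ written in the conjugate chart. Because everything is diagonal in $(u,v)$, only the two off-diagonal Codazzi components survive, and a direct computation with the Christoffel symbols $\Gamma_{uv}^v, \Gamma_{vu}^u$ of the induced metric on $L^2$ reduces them to exactly the system~(\ref{sistema de edp Sbana cartan clasico}). The integrability condition of this system is either automatically satisfied by the Christoffels of $h$ (first species), in which case the solutions are parametrized by the initial value of $\tau$ at one point, yielding moduli $\mathbb{R}_{>0}\setminus\{1\}$ in the real case and $\mathbb{S}^1\setminus\{1\}$ in the complex case; or it forces the unique solution (\ref{solucion segunda especie sbrana cartan clasico}), giving a single genuine deformation (second species). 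For $\gamma$, I would write $A_f$ via the Gauss parametrization and demand that its off-diagonal entry in the chart $(u,v)$ vanish (since the chart is conjugate for $f$); this off-diagonal entry is precisely $\partial^2_{uv}\gamma - \Gamma_{vu}^u\partial_u\gamma - \Gamma_{uv}^v\partial_v\gamma + \gamma g_{uv}$, giving the claimed PDE for the support function.

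I expect the main obstacle to be the bookkeeping in the Codazzi-to-PDE reduction: one must show that the eigendirections of $P$ can be chosen holonomically (i.e.~close up to an actual coordinate system on $L^2$), and carefully track how the Christoffel symbols of $h$ enter when differentiating $P$. A secondary, but subtle, point is ensuring that the exclusion of surface-like and $(n-1)$-ruled hypersurfaces is exactly what rules out the cases where $P$ is parallel along a splitting of $L^2$ or where $\alpha^f$ has an $A_f$-null direction, since otherwise the eigenvalue $\tau$ could degenerate and the described dichotomy would fail.
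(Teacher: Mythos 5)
First, a point of calibration: the paper does not prove this theorem. It is recalled in Section~\ref{subseccion SB clasificacion} as a classical result of Sbrana \cite{Sb} and Cartan \cite{Ca}, with \cite{DFT2} cited for the modern treatment; the only in-paper analogue of a proof is the general machinery of Section~\ref{seccion descripcion del problema}, whose $p=1$ case recovers the statement. Measured against that, your outline is essentially the standard modern proof and is correct in its main lines: reduce to $L^2=M/\Gamma$, introduce $P=A_f^{-1}A_g$ with $\det P=1$, diagonalize, turn Codazzi into the first-order system (\ref{sistema de edp Sbana cartan clasico}) for $\tau$, and read the species off its integrability condition. The paper's route differs in two respects: the conjugate directions are produced as common eigenvectors of the splitting tensors $C_T$ (the ``generic'' hypothesis) rather than of $P$, and the moduli space is parametrized by the admissible tuple $\varphi$, a parallel section of the Sbrana bundle whose trivial holonomy defines the species; for $p=1$ this is a M\"obius reparametrization of your $\tau$ (cf.\ Remark~\ref{relacion caso p=2 con trabajo dft}). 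What the paper's formulation buys is that it scales to arbitrary $p$; what yours buys is the explicit closed-form second-species solution (\ref{solucion segunda especie sbrana cartan clasico}).

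Three concrete points in your plan need repair. (i) Your attribution of the excluded classes is off. In codimension one, $\mathcal{S}(\beta)\subseteq W^{1,1}$ being degenerate forces $A_g=A_f$, i.e.\ congruence, so for a genuine rank-two pair $\mathcal{S}(\beta)=W^{1,1}$ automatically and the Main Lemma applies with no extra hypothesis. The $(n-1)$-ruled exclusion is instead what rules out $P$ being a non-semisimple unipotent Jordan block (in which case $\beta$ does not diagonalize and no conjugate chart exists), and the surface-like exclusion rules out $C_T\in\operatorname{span}\{I\}$, in which case the eigendirections of $P$ are not intrinsic to $f$ (they vary with the deformation) and the moduli space is a function space rather than the finite-dimensional set described. (ii) The conjugate-chart property of $h$ does not follow from ``simultaneously diagonalizing $A_f$ and $A_g$ and projecting'': since $T^\perp_hL\cong f_*\Gamma$, one has $\alpha^h(\hat X,\hat Y)=0$ iff $\langle A_fY,C_TX\rangle=0$ for all $T\in\Gamma$, so what is actually needed is that the diagonalizing directions are eigenvectors of the splitting tensor; this is where (\ref{beta y CT}) (Codazzi for $\beta$ in nullity directions) enters, and it is a separate step from the fibrewise algebra. (iii) The holonomicity of the eigendirections, which you correctly flag as the main obstacle, is genuine work; the paper isolates it as Proposition~\ref{demostracion Bajar coordenadas}. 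None of these is a wrong turn, but as written the plan asserts conclusions at points where the surface-like/ruled hypotheses and the Codazzi equation must actually be invoked.
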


We say that a deformable hypersurface $f:M^n\rightarrow\mathbb{R}^{n+1}$ is of the {\it continuous type} (resp. {\it discrete type}) if it is described by the above theorem and the Gauss map is of the first species (resp. second species). 

\begin{remark}
    In the case that the Gauss map is of second species and real type but $\tau$ given by (\ref{solucion segunda especie sbrana cartan clasico}) is negative, we can associate an isometric immersion in the Lorentz space $\mathbb{R}^{n+1}_1$, as shown in Theorem 5 of \cite{DF2}. In a similar way, when the Gauss map is of the first species, for each initial condition for $\tau$ negative we can associate an isometric immersion $g=g_\tau:M^n\rightarrow\mathbb{R}_1^{n+1}$. This is an important result for studying conformally flat submanifolds and one of the main reasons we will not restrict ourselves only to Riemannian ambient Euclidean spaces.  
\end{remark}

\subsection{Darboux-Manakov-Zakharov (DMZ) systems}\label{sistemas de Darboux}
This subsection describes Darboux-Manakov-Zakharov (overdetermined) systems of PDEs. They have a crucial role in the description of our geometric problem.

\text{ }

One of Darboux many interests was that of orthogonal systems of coordinates for $\mathbb{R}^{p+1}$. That is, coordinate systems $(u_0,\ldots,u_p)$ of $\mathbb{R}^n$ such that the Euclidean metric is expressed as
\begin{equation*}
    ds^2=v_0^2du_0^2+\ldots+v_p^2du_p^2,
\end{equation*}
for some smooth functions $v_i=v_i(u_0,\ldots,u_p)$. For $p=2$ this problem is called the problem of triply orthogonal systems of surfaces. 
It is easy to verify that for such a coordinate system we have that, for three distinct indices, the Christoffel symbols satisfy $\Gamma_{ij}^k=0$ and $\Gamma_{ji}^i=\frac{\partial_jv_i}{v_i}$.
This naturally implies that for any indices $i\neq j<k\neq i$ we have that
\begin{equation}\label{ecuacion de los vi}
    \partial^2_{jk}v_i-\Gamma_{kj}^j\partial_jv_i-\Gamma_{jk}^k\partial_kv_i=0.
\end{equation}
Additional non-linear equations must be satisfied by the $v_i$'s in order to obtain a flat metric.

Darboux proposed an associated system of PDEs to find solutions of the last equations and linearize the problem. Consider $(u_0,\ldots,u_p)=(z_0,\overline{z_0},\ldots, z_{s-1},\overline{z_{s-1}},x_{2s},\ldots,x_p)\in\mathbb{C}^{2s}\times\mathbb{R}^{p+1-2s}$ for some $s$, and denote by $\overline{i}$ the unique index which satisfies $\overline{u_i}=u_{\overline{i}}$. 
The collection $Q=(Q_{ij})_{i<j}$ of second order linear PDEs given by
\begin{equation}\label{Q}
    (Q(\xi))_{ij}=Q_{ij}(\xi)=\partial^2_{ij}\xi+a_{ij}^j\partial_j\xi+a_{ji}^i\partial_i\xi+b_{ij}\xi=0\quad\forall\, 0\leq i<j\leq p,
\end{equation}
for $\partial_i=\partial_{u_i}$, and some smooth complex functions $a_{ij}^j,b_{ij}$ satisfying $\overline{a_{ij}^j}=a_{\overline{i}\,\overline{j}}^{\overline{j}}$, $\overline{b_{ij}}=b_{\overline{i}\,\overline{j}}$ is called a {\it Darboux-Manakov-Zakharov (DMZ) system}. Darboux only analyzed the case when $s=0$ and $p=2$, but this generalization is natural and is needed for this work. Notice the similarity between (\ref{ecuacion de los vi}) and (\ref{Q}) with $b_{ij}=0$ (for us the case $b_{ij}=0$ is irrelevant, see Proposition \ref{carta conjugada esfera=carta conjugada euclideano}).

As shown in \cite{KT}, we can associate a set of {\it Laplace invariants} to a DMZ system. Those invariants determine the system when all the equations are hyperbolic as shown in Theorem 1 of \cite{KT2}. They are defined for distinct indices by 
$$m_{ij}=\partial_ia_{ji}^i+a_{ji}^ia_{ij}^j-b_{ij},$$
$$m_{ijk}=a_{kj}^k-a_{ij}^i.$$

We now provide the natural generalization of the notion of conjugate chart for higher dimensional submanifolds.
\begin{defn}\label{definicion de carta conjugada}
    \normalfont A coordinate system $(z_0,\ldots z_{s-1},x_{2s},\ldots, x_{p})\in\mathbb{C}^s\times\mathbb{R}^{p+1-2s}$ of a submanifold $h:L^{p+1}\rightarrow\mathbb{S}^n\subseteq\mathbb{R}^{n+1}$ is called {\it conjugate} if $h$ is a solution of a DMZ system with respect to $(u_0,\ldots,u_{p})=(z_0,\overline{z_0},\ldots,z_{s-1},\overline{z_{s-1}},$
    $x_{2s},\ldots,x_{p})$, that is
    \begin{equation}\label{equacion de ser carta conjugada}
        Q_{ij}(h)=\partial^2_{ij}h-\Gamma_{ji}^i\partial_ih-\Gamma_{ij}^j\partial_jh+g_{ij}h=0, \quad \forall i< j,
    \end{equation}
    where $\{\partial_i=\partial_{u_i}\}_{i=0}^p$ is the local coordinate frame for $(TL)_{\mathbb{C}}$, $\Gamma_{ji}^i,\Gamma_{ij}^j:L^{p+1}\rightarrow\mathbb{C}$ are necessarily the Christoffel symbols associated to this frame, and $g_{ij}=\langle\partial_ih,\partial_jh\rangle.$ 
\end{defn}
\begin{remark}\label{condicion integrabilidad carta conjugada}
    Notice that (\ref{equacion de ser carta conjugada}) is equivalent to $\alpha^h(\partial_i,\partial_j)=0$ and $\Gamma_{ij}^k=0$ for distinct indices. Then the Gauss equation of $h$ for three distinct indices becomes
    $$R(\partial_i,\partial_j)\partial_k=g_{jk}\partial_i-g_{ik}\partial_j,$$
    which is equivalent to 
    \begin{equation}\label{integrabilidad definicion de conjugada}
        \partial_i\Gamma^j_{kj}+\Gamma^j_{kj}\Gamma^j_{ij}-\Gamma^j_{kj}\Gamma^k_{ik}-\Gamma^j_{ij}\Gamma^i_{ki}+g_{ik}=0.
    \end{equation}
    Those equations and the compatibility of the metric with the connection are precisely the integrability conditions for the DMZ system (\ref{equacion de ser carta conjugada}).
\end{remark}

As proved in \cite{KT} we have the following.
\begin{prop}\label{carta conjugada esfera=carta conjugada euclideano}
    Suppose that $h:L^{p+1}\rightarrow\mathbb{S}^n$ has a conjugate chart and $\gamma\in C^{\infty}(L^{p+1})$ non-zero solution of the associated DMZ system, that is $Q(\gamma)=0$. Then the submanifold $H:L^{p+1}\rightarrow\mathbb{R}^{n+1}$ given by $H:=\frac{h}{\gamma}$ satisfy
    \begin{equation}\label{Q(H)=0}
        \tilde{Q}_{ij}(H)=\partial^2_{ij}H-\tilde{\Gamma}^i_{ji}\partial_iH-\tilde{\Gamma}^j_{ij}\partial_jH=0,\quad\forall i<j,
    \end{equation}
    for $\tilde{\Gamma}^i_{ji}=\Gamma^i_{ji}-\frac{\partial_j\gamma}{\gamma}$.
    
    Conversely, let $0\neq H:L^{p+1}\rightarrow\mathbb{R}^{n+1}$ be a submanifold satisfying (\ref{Q(H)=0}). Define $\gamma:=\frac{1}{\|H\|}\neq 0$ and assume that $h:=\gamma H:L^{p+1}\rightarrow\mathbb{S}^{n}$ is an immersion. Then $h$ solves (\ref{equacion de ser carta conjugada}) for $\Gamma_{ji}^i=\tilde{\Gamma}_{ji}^i+\frac{\partial_j\gamma}{\gamma}$ and $g_{ij}=\frac{2\partial_i\gamma\partial_j\gamma-\gamma\tilde{Q}_{ij}(\gamma)}{\gamma^2}$. In this case, $Q(\gamma)=0$.
\end{prop}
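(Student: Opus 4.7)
The proposition amounts to verifying that the nonlinear change of dependent variable $H=h/\gamma$ (equivalently $h=\gamma H$) intertwines the DMZ system $Q_{ij}=0$ with the zeroth-order-free system $\tilde Q_{ij}=0$ at the price of shifting the Christoffel coefficients by $\partial_j\gamma/\gamma$. Both halves are proved by explicit substitution; there is no conceptual obstacle, only algebraic bookkeeping.

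For the direct implication I would compute
\[
\partial_iH=\frac{\partial_ih}{\gamma}-\frac{h\,\partial_i\gamma}{\gamma^2},\qquad \partial^2_{ij}H=\frac{\partial^2_{ij}h}{\gamma}-\frac{\partial_ih\,\partial_j\gamma+\partial_jh\,\partial_i\gamma+h\,\partial^2_{ij}\gamma}{\gamma^2}+\frac{2h\,\partial_i\gamma\,\partial_j\gamma}{\gamma^3},
\]
then use the two hypotheses $Q_{ij}(h)=0$ and $Q_{ij}(\gamma)=0$ to substitute for $\partial^2_{ij}h$ and $\partial^2_{ij}\gamma$. The two $g_{ij}$-contributions combine into $\pm g_{ij}h/\gamma$ and cancel. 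Grouping the surviving terms by $\partial_iH$ and $\partial_jH$, the coefficient of $\partial_iH$ is exactly $\Gamma^i_{ji}-\partial_j\gamma/\gamma=\tilde\Gamma^i_{ji}$ and similarly for the $j$-term, which yields $\tilde Q_{ij}(H)=0$.

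For the converse I would run the same calculation in reverse. Starting from $\tilde Q_{ij}(H)=0$, compute $\partial^2_{ij}h$ from $h=\gamma H$ by the Leibniz rule, substitute $\partial^2_{ij}H=\tilde\Gamma^i_{ji}\partial_iH+\tilde\Gamma^j_{ij}\partial_jH$ from the hypothesis, and rewrite $\partial_iH=\partial_ih/\gamma-h\partial_i\gamma/\gamma^2$. Regrouping, the coefficient of $\partial_ih$ becomes $\tilde\Gamma^i_{ji}+\partial_j\gamma/\gamma$, the coefficient of $\partial_jh$ becomes $\tilde\Gamma^j_{ij}+\partial_i\gamma/\gamma$, and the residual coefficient of $h$ is $\bigl(\tilde Q_{ij}(\gamma)-2\partial_i\gamma\partial_j\gamma/\gamma\bigr)/\gamma=-g_{ij}$ for the claimed $g_{ij}$. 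This produces the abstract identity $\partial^2_{ij}h-\Gamma^i_{ji}\partial_ih-\Gamma^j_{ij}\partial_jh+g_{ij}h=0$.

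Two consistency checks complete the converse. First, that the abstract $g_{ij}$ really is the induced metric $\langle\partial_ih,\partial_jh\rangle$: differentiating $\|h\|^2=1$ twice gives $\langle\partial_ih,\partial_jh\rangle=-\langle\partial^2_{ij}h,h\rangle$, while pairing $Q_{ij}(h)=0$ with $h$ gives $\langle\partial^2_{ij}h,h\rangle=-g_{ij}$, since $\langle\partial_ih,h\rangle=0$. Hence the two agree. Second, projecting $Q_{ij}(h)=0$ onto $T_h\mathbb{S}^n=h^\perp$ and splitting further along $h_*TL$ and its normal in $\mathbb{S}^n$, the uniqueness of the Levi-Civita decomposition forces the abstract $\Gamma^i_{ji}$ to coincide with the true Christoffel symbols of $g$, forces $\Gamma^k_{ij}=0$ for distinct $i,j,k$, and forces $\alpha^h(\partial_i,\partial_j)=0$; so $h$ does carry a genuine conjugate chart in the sense of Definition \ref{definicion de carta conjugada}. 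Finally, $Q(\gamma)=0$ is a one-line substitution: $Q_{ij}(\gamma)=\tilde Q_{ij}(\gamma)-(\partial_j\gamma/\gamma)\partial_i\gamma-(\partial_i\gamma/\gamma)\partial_j\gamma+g_{ij}\gamma$, and plugging in the formula $g_{ij}=(2\partial_i\gamma\partial_j\gamma-\gamma\tilde Q_{ij}(\gamma))/\gamma^2$ gives $0$.

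The only foreseeable pitfall is bookkeeping the eight or so terms in $\partial^2_{ij}H$ without sign errors; to avoid duplication I would perform the regrouping once in the forward direction and invoke it in reverse in the converse. No integrability obstruction arises, since integrability of both $Q$ and $\tilde Q$ is part of the standing definition of conjugate chart (Remark \ref{condicion integrabilidad carta conjugada}) and is preserved by the change of variable.
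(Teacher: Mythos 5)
Your proof is correct, and the direct substitution you carry out (computing $\partial^2_{ij}H$ via Leibniz, cancelling the $g_{ij}$ terms, regrouping by $\partial_iH$ and $\partial_jH$, then reversing the computation and identifying $g_{ij}=\langle\partial_ih,\partial_jh\rangle$ and the Christoffel symbols via the tangential/normal/radial decomposition) is exactly the intended argument. The paper itself gives no proof, deferring to the reference \cite{KT}, so there is nothing further to compare against; your verification of $Q(\gamma)=0$ at the end and the consistency checks in the converse are complete and accurate.
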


This shows that finding conjugate charts for submanifolds in the sphere is equivalent to the problem in the Euclidean space, that is, finding independent solutions to DMZ systems.

\section{Description of the genuine deformations}\label{seccion descripcion del problema}
Our purpose in this section is to find an intermediate analytical characterization for the genuine deformations of a hypersurface $f:M^n\rightarrow\mathbb{R}^{n+1}$ with rank $(p+1)\geq 2$ in higher codimensions. 

\text{ }

From now on, $A=A_\rho$ will denote the shape operator of $f$ with respect to a fixed unit normal vector field $\rho$, $\alpha:=\alpha^g$ the second fundamental form of another isometric immersion $g$ of $M^n$, and $\beta=\alpha\oplus\alpha^f:TM\times TM\rightarrow T^\perp _gM\oplus T^\perp _fM$ the associated flat bilinear form. All sub-indices in this section will be in the range $\{0,1,\ldots,p\}$.

\begin{prop}\label{S(beta) definido si no es reglada}
    Suppose that $f:M^n\rightarrow\mathbb{R}^{n+1}$ has rank $(p+1)$ and fix $\varepsilon\in\{0,1\}$. Let $g:M^n\rightarrow\mathbb{R}^{n+p+\varepsilon}$ be a genuine deformation of $f$ with $p+1+\varepsilon<n$. For $p\geq 5-2\varepsilon$, assume in addition that $f$ and $g$ are not mutually $ (n-p-\varepsilon+2)$-ruled. Then $\mathcal{S}(\beta)$ is non-degenerate on a open dense subset of $M^n$.
\end{prop}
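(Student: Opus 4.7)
The proof would proceed by contradiction. Suppose $\mathcal{S}(\beta)$ is degenerate on some nonempty open subset $U \subseteq M^n$. Since $W = T^\perp_g M \oplus T^\perp_f M$ has signature $(p + \varepsilon, 1)$, any isotropic subspace of $W$ has dimension at most one, so on $U$ the assumption provides a smooth null line bundle $L := \mathcal{S}(\beta) \cap \mathcal{S}(\beta)^\perp$ inside $\mathcal{S}(\beta)$. Writing a local section of $L$ as $\xi + \rho$, with $\rho$ the unit normal of $f$ and $\xi \in T^\perp_g M$ of unit length, the orthogonality $L \perp \mathcal{S}(\beta)$ is equivalent to the shape-operator identity $A^g_\xi = A$, where $A$ is the shape operator of $f$.

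I would then pass to a Riemannian quotient. The space $\overline{W} := L^\perp / L$ inherits a positive-definite inner product of dimension $p + \varepsilon - 1$, and the induced symmetric flat bilinear form $\bar\beta \colon TM \times TM \to \overline{W}$ has non-degenerate image $\mathcal{S}(\bar\beta) = \mathcal{S}(\beta)/L$. Applying the Main Lemma to $\bar\beta$ yields $\dim \Delta_{\bar\beta} \geq n - p - \varepsilon + 1$. Moreover, a vector $X$ lies in $\Delta_{\bar\beta}$ precisely when $\alpha^g(X, \cdot) = \langle AX, \cdot\rangle\, \xi$ and $\alpha^f(X, \cdot) = \langle AX, \cdot\rangle\, \rho$, so on $\Delta_{\bar\beta} \times \Delta_{\bar\beta}$ both second fundamental forms are governed by the scalar symmetric form $q(X, Y) = \langle A X, Y\rangle$, whose kernel contains $\Delta_\beta$.

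The plan would then be to extract a mutual ruling. Any subspace $R \subseteq \Delta_{\bar\beta}$ on which $q$ vanishes satisfies $\alpha^f|_{R \times R} = \alpha^g|_{R \times R} = 0$. Combining the lower bound on $\dim \Delta_{\bar\beta}$, the inclusion $\Delta_\beta \subseteq \ker(q|_{\Delta_{\bar\beta}})$, and the constraint $\mathrm{rank}(A) = p + 1$, one should produce such an $R$ of dimension at least $n - p - \varepsilon + 2$ in the range $p \geq 5 - 2\varepsilon$ (which is exactly the algebraic threshold $p + 1 \leq 2(n - d)$ of Lemma \ref{AR,R=0} for $d = n - p - \varepsilon + 2$). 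A Codazzi-equation computation for $\beta$, using parallelism of the null line $L$ (arranged after rescaling the local section on a smaller open set), would then promote $R$ to an integrable totally geodesic distribution, delivering the required mutual $(n - p - \varepsilon + 2)$-ruling and contradicting the hypothesis.

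The main obstacle is the construction in the third paragraph: producing a $q$-isotropic subspace of $\Delta_{\bar\beta}$ with the sharp dimension, and then promoting it to a totally geodesic distribution via the Codazzi equations projected along and transverse to $L$. For $p < 5 - 2\varepsilon$, Lemma \ref{AR,R=0} forbids any mutual $(n - p - \varepsilon + 2)$-ruling from existing at all, so the above contradiction fails in that range; one must instead deduce the non-degeneracy of $\mathcal{S}(\beta)$ from the genuineness of $(f, g)$ together with Theorem~1 of \cite{DF3}, whose sharp ruling estimate for genuine pairs in low codimension rules out the structure forced by the degenerate null direction.
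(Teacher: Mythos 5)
Your setup is sound and essentially coincides with the paper's: the degenerate direction of $\mathcal{S}(\beta)$ is a null line spanned by a section $(\xi,\rho)$, the identity $A^g_\xi=A$ is correct, and passing to the positive definite quotient $L^\perp/L$ (equivalently, projecting $\alpha^g$ onto $\{\xi\}^\perp\subseteq T^\perp_gM$, which is what the paper does) yields a flat form whose nullity $\Delta_{\bar\beta}$ has dimension at least $n-p-\varepsilon+1$ and on which $\beta$ is governed by $q(X,Y)=\langle AX,Y\rangle$ times $(\xi,\rho)$.

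The gap is in your third paragraph. The existence of a $q$-isotropic subspace $R\subseteq\Delta_{\bar\beta}$ of dimension $n-p-\varepsilon+2$ does not follow from the ingredients you list: you only control $\dim\Delta_{\bar\beta}\geq n-p-\varepsilon+1$, one less than what you need, and the symmetric form $q$ restricted to $\Delta_{\bar\beta}$ may well be definite modulo a kernel whose guaranteed dimension is only $\dim\Delta_{\bar\beta}-\operatorname{rank}(A)\geq n-2p-\varepsilon$; in that situation the maximal isotropic subspace is far too small and no contradiction with the non-ruled hypothesis arises. That ``bad'' case is exactly where the actual proof does its work. The paper does not build the ruling by hand: it invokes Theorems 11 and 14 of \cite{DF3}, which, starting from the flat projected form, produce a mutual $R^d$-ruling with $d\geq n-p-\varepsilon-1+3\ell$ where $\ell=\dim L\in\{0,1\}$; the non-ruled hypothesis forces $\ell=0$, and the two remaining possibilities (either $\Delta_\gamma=\Delta_\beta$, or there exists $Z_0\in\Delta_\gamma$ with $\nabla^\perp_{Z_0}\xi\neq0$) are each excluded via the Main Lemma, in the second case applied to an auxiliary flat Codazzi tensor $\phi(X,v)=(\tilde{\nabla}_Xv)_E$ built from the derivative of $\xi$. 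Your ``promotion of $R$ to a totally geodesic distribution via Codazzi'' is likewise not a routine step --- it is precisely the content of those cited theorems --- so as written the argument is missing its central ingredient. Finally, your treatment of $p<5-2\varepsilon$ is slightly off: in that range the non-ruled hypothesis is vacuously true by Lemma \ref{AR,R=0}, so the same proof applies verbatim and no separate appeal to Theorem 1 of \cite{DF3} is needed.
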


\begin{proof}
    First observe that the condition of not being mutually $(n-p-\varepsilon+2)$-ruled is trivially satisfied for $p\leq 4-2\varepsilon$ by Lemma \ref{AR,R=0}.

    Suppose that there is an open subset $U\subseteq M$ where $\mathcal{S}(\beta)$ is degenerate. Since $W^{p+\varepsilon,1}=T^\perp_gM\oplus T^\perp_fM$ is Lorentzian, there is a smooth unitary normal section $\xi\in T^\perp_gU$ such that
    \begin{equation}\label{Sbeta degenerado}
        \text{span}\{(\xi,\rho)\}=\mathcal{S}(\beta)\cap\mathcal{S}(\beta)^\perp. 
    \end{equation}
    Consider $\gamma:TU\times TU\rightarrow E$ the orthogonal projection of $\alpha^g$ onto $E=\{\xi\}^\perp\subseteq T^\perp_g M$. By (\ref{Sbeta degenerado}), $\gamma$ is flat. Theorems~11 and 14 of \cite{DF3} imply that $f$ and $g$ are simultaneously $R^d$-ruled, where $R^d=\mathcal{N}(\alpha^g_{L^\perp})\cap\mathcal{N}(\alpha^f_{\hat{L}^\perp})$, $L\subseteq\text{span}\langle\xi\rangle$, $\hat{L}\subseteq\text{span}\langle\rho\rangle$, $0\leq\ell=\dim(L)=\dim(\hat{L})\leq 1$ and 
    \begin{equation}\label{cota para el d}
        d\geq n-p-\varepsilon-1+3\ell.
    \end{equation}
    As $f$ and $g$ are not simultaneously $(n-p-\varepsilon+2)$-ruled we have that $L=\hat{L}=\{0\}$ and $R=\Delta_\beta$. By the construction of $L$ in Theorem 11 of \cite{DF3}, this happens only when either $\Delta_\gamma=\Delta_\beta$ or if there is $Z_0\in\Delta_\gamma$ such that $\nabla_{Z_0}^\perp\xi\neq 0$. If $\Delta_\gamma=\Delta_\beta$, by the Main Lemma for $\gamma$ we have that
    $$n-p-\varepsilon+1\leq\dim(\Delta_\gamma)=\dim(\Delta_\beta)\leq \nu_f=n-p-1,$$
    a contradiction. Hence, assume the existence of such $Z_0\in\Delta_\gamma$. 
    
    Call $\phi:TU\times(TU\oplus\text{span}\{\xi\})\rightarrow E$ the map given by
    \begin{equation*}
        \phi(X,v)=(\tilde{\nabla}_X v)_E,
    \end{equation*}
    where $\tilde{\nabla}$ denotes the connection of $\mathbb{R}^{n+p+\varepsilon}$ and the sub-index $E$ denotes the orthogonal projection onto $E$. An easy computation shows that $\phi$ is flat and satisfies Codazzi equation. By the above $\Delta_\phi\subsetneq\Delta_\gamma$. Take $W\in \Delta_\phi$ and $Y\in TU$. Codazzi equation $(\nabla_{Z_0}^E\phi)(W,Y)=(\nabla_W^E\phi)(Z_0,Y)$ reduces to
    \begin{equation*}\label{codazi de phi, caso degenerado}
        \phi([Z_0,W],Y)=\langle AW,Y\rangle\nabla_{Z_0}^{\perp} \xi.
    \end{equation*}
    Using the flatness of $\phi$ and the above relation we get
    $$\langle AW,Y\rangle\| \nabla_{Z_0}^\perp\xi\|^2=\langle\phi([Z_0,W],Y),\phi(Z_0,\xi)\rangle=\langle\phi(Z_0,Y),\phi([Z_0,W],\xi)\rangle=0.$$
    This proves that $\langle AW,Y\rangle=0$ for all $Y\in TU$, since $\nabla^\perp_{Z_0}\xi\neq 0$. Then, $\Delta_\phi\subseteq\Delta_f$, and by Lemma \ref{nulidad para no simetrica}, we have that $\nu_f\geq \dim(\Delta_\phi)\geq n-p-\varepsilon+1$, which is also a contradiction.
\end{proof}
\begin{remark}\label{p=5,6}
    For $p\in\{5-2\varepsilon,6-2\varepsilon\}$ we can prove a weaker version of Proposition \ref{S(beta) definido si no es reglada} without the hypotheses of not being $(n-p+2)$-ruled. In this case we can conclude that either $\mathcal{S}(\beta)$ is non-degenerate, or $f$ and $g$ are mutually $R^d$ ruled with $d=n-p-\varepsilon+2$ and $\Delta_g=\Gamma\subseteq R^d$. Indeed, if we follow the steps of the proof we see that the only problem is when $l=1$. In this case, if $\dim(\Gamma+R^d)\geq n-p-\varepsilon+3$ using Lemma \ref{AR,R=0} for $(\Gamma+R^d)$ we get a contradiction. Then, using (\ref{cota para el d}) we get that $\Delta_\beta=\Gamma\subsetneq R^d$ and $d=n-p-\varepsilon+2$. Finally, just notice that $\Gamma=\Delta_\beta\subseteq\Delta_g\subseteq\Gamma$.
\end{remark}
The Main Lemma gives us the next corollary.
\begin{cor}\label{corolario de no ser reglada}
    If $f$ and $g$ are as in Proposition \ref{S(beta) definido si no es reglada} with $\varepsilon=0$, then $\Delta_g=\Delta_f=\Gamma$ and $\mathcal{S}(\beta)=W^{p,1}$.
\end{cor}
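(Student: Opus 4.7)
The plan is straightforward, since the corollary is essentially an elementary dimension count once Proposition \ref{S(beta) definido si no es reglada} is in hand. I would begin by restricting to the open dense subset of $M^n$ on which $\mathcal{S}(\beta)$ is non-degenerate, which is precisely the content of Proposition \ref{S(beta) definido si no es reglada} applied with $\varepsilon=0$. Since $\mathcal{S}(\beta)$ sits as a non-degenerate subspace of the Lorentzian bundle $W^{p,1}=T^\perp_gM\oplus T^\perp_fM$, its signature $(p',q')$ satisfies $p'\leq p$ and $q'\leq 1$, so in particular $\min\{p',q'\}\leq 1\leq 5$. Thus the Main Lemma (Lemma \ref{Main lemma}), applied to $\beta$ regarded as taking values in $\mathcal{S}(\beta)$, gives
\[\dim(\Delta_\beta)\geq n-\dim\mathcal{S}(\beta)\geq n-(p+1).\]

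Next I would match this from above. The inclusion $\Delta_\beta=\Delta_f\cap\Delta_g\subseteq\Delta_f$ together with the rank hypothesis $\nu_f=n-(p+1)$ immediately forces $\dim(\Delta_\beta)\leq n-p-1$. Combining both bounds, every inequality is an equality: $\dim\mathcal{S}(\beta)=p+1$, so $\mathcal{S}(\beta)=W^{p,1}$, and $\Delta_\beta=\Delta_f$. Because the rank of $f$ is $(p+1)\geq 2$, the Gauss equation for a hypersurface (which up to sign reads $R(X,Y)=AX\wedge AY$) prevents the shape operator $A$ from having rank $1$, so $\Gamma=\Delta_f$, and hence $\Delta_\beta=\Gamma$ as well.

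Finally, to conclude $\Delta_g=\Gamma$, I would use the generic fact that for any isometric immersion $g$ one has $\Delta_g\subseteq\Gamma$: if $\alpha^g(X,\cdot)=0$ then the Gauss equation of $g$ yields $R(X,\cdot)\cdot=0$, so $X\in\Gamma$. Combining this with $\Gamma=\Delta_\beta\subseteq\Delta_g$ from the previous step gives the two-sided inclusion $\Gamma\subseteq\Delta_g\subseteq\Gamma$, proving $\Delta_g=\Delta_f=\Gamma$. There is no genuine obstacle: the only point that deserves attention is verifying that the signature of $\mathcal{S}(\beta)$ is within the range of the Main Lemma, which is automatic from the Lorentzian ambient $W^{p,1}$.
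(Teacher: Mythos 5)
Your proposal is correct and follows essentially the same route as the paper: invoke Proposition \ref{S(beta) definido si no es reglada} to get non-degeneracy of $\mathcal{S}(\beta)$, apply the Main Lemma to obtain the chain $n-p-1\leq n-\dim\mathcal{S}(\beta)\leq\dim(\Delta_\beta)\leq\nu_f=n-p-1$, and read off both conclusions from the forced equalities together with $\Delta_g\subseteq\Gamma=\Delta_\beta\subseteq\Delta_g$. The only difference is that you spell out the verification of the Main Lemma's signature hypothesis and the identification $\Delta_f=\Gamma$, which the paper leaves implicit.
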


For our purposes, it is more natural and fruitful to classify the deformations in semi-Euclidean spaces, that is, $\mathbb{R}^{n+p}$ with a non-degenerate inner product, which satisfy the same formal properties as the ones in the Euclidean case. In this case we denote the ambient space as $\mathbb{R}^{n+p}_{\mu}$, where $\mu$ is the index of the inner product. In particular, $\mathbb{R}^{n+p}=\mathbb{R}^{n+p}_0$.

\begin{defn}\label{non-degenerate deformation}
    \normalfont Consider $f:M^n\rightarrow\mathbb{R}^{n+q}_{\eta}$ and $g:M^n\rightarrow\mathbb{R}^{n+p}_{\mu}$ two isometric immersions of a Riemannian manifold $M^n$. We say that $g$ is a {\it non-degenerate deformation} of $f$ if there exists $X\in\text{Re}(\beta)$ such that $\beta^X(TM)\subseteq W=T^\perp _gM\oplus T^\perp _fM$ is a non-degenerate subspace, where $\beta=\alpha^g\oplus\alpha^f$. 
\end{defn}

Corollary \ref{corolario de no ser reglada} and Corollary 2 of \cite{Moo} imply the following.
\begin{cor}\label{genuinas son no degeneradas}
    Let $f:M^n\rightarrow\mathbb{R}^{n+1}$ be a rank $p+1<n$ hypersurface. If $p\geq 5$ assume further that $f$ is not $(n-p+2)$-ruled. Then any genuine deformation $g:M^n\rightarrow\mathbb{R}^{n+p}$ of $f$ is non-degenerate. 
\end{cor}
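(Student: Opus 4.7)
My plan is to combine Corollary \ref{corolario de no ser reglada}, which controls the bundle $\mathcal{S}(\beta)$, with Moore's result on regular elements of flat bilinear forms. The hypotheses here are exactly those of Proposition \ref{S(beta) definido si no es reglada} with $\varepsilon=0$ (the non-ruling assumption for $p\geq 5$ being automatic for $p\leq 4$ by Lemma \ref{AR,R=0}), so I can invoke Corollary \ref{corolario de no ser reglada} and conclude, on an open dense subset of $M^n$,
\begin{equation*}
    \mathcal{S}(\beta)=W^{p,1}, \qquad \Delta_\beta=\Delta_f=\Delta_g=\Gamma.
\end{equation*}

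With this in hand, the task reduces to exhibiting an $X\in\text{Re}(\beta)$ such that $\beta^X(TM)$ is non-degenerate in $W^{p,1}$. This is exactly what Corollary 2 of \cite{Moo} is designed to deliver: for a flat bilinear form whose span is non-degenerate, any regular element should satisfy $\beta^X(TM)=\mathcal{S}(\beta)$. Applying this here yields $\beta^X(TM)=W^{p,1}$, which is non-degenerate, and therefore $g$ fits Definition \ref{non-degenerate deformation} directly.

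The main obstacle, and the one place where a short argument is actually needed, is Moore's corollary itself. I would derive it from Lemma \ref{nulidad para no simetrica}: for a regular $X$ the inclusion $\ker\beta^X\supseteq\Delta_\beta$ holds by upper semi-continuity of the rank, so $\dim\beta^X(TM)\leq\dim\mathcal{S}(\beta)$. Lemma \ref{nulidad para no simetrica} then rules out $\beta^X(TM)$ being a proper or degenerate subspace of $\mathcal{S}(\beta)$, since otherwise $\beta^X(TM)\cap\beta^X(TM)^\perp$ would have to absorb $\mathcal{S}(\beta|_{\ker\beta^X\times TM})$ into its null cone, and a dimension count against the non-degeneracy of $\mathcal{S}(\beta)=W^{p,1}$ produces a contradiction. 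Once this step is in place, the corollary drops out and the proof concludes.
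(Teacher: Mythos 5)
Your proposal is correct and follows exactly the paper's route: the paper proves this corollary in one line by combining Corollary \ref{corolario de no ser reglada} (itself Proposition \ref{S(beta) definido si no es reglada} with $\varepsilon=0$ plus the Main Lemma) with Corollary 2 of \cite{Moo}, which is precisely what you do. Your extra sketch deriving Moore's Corollary 2 from Lemma \ref{nulidad para no simetrica} is additional detail the paper omits by citation, and it is consistent with the standard argument.
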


\begin{remark}\label{remark de non degenerate}
    By Lemma \ref{nulidad para no simetrica}, for any non-degenerate deformation $g:M^n\rightarrow\mathbb{R}^{n+p}_{\mu}$ of a nowhere flat hypersurface $f:M^n\rightarrow\mathbb{R}^{n+1}$ of rank $(p+1)$ we have that $\mathcal{S}(\beta)=W$ and $\Delta_g=\Gamma$, as in Corollary \ref{corolario de no ser reglada}.
\end{remark}

The splitting tensor is important in the Sbrana-Cartan classification to differentiate the families of deformable hypersurfaces of rank 2. We will use it in an analogous way.
\begin{defn}\label{definicion spliting tensor}
    \normalfont Consider $M^n$ a Riemannian manifold. For $T\in\Gamma$ we define the {\it splitting tensor with respect to} $T$ as the endomorphism $C_T:\Gamma^\perp\rightarrow\Gamma^\perp$ given by
    \begin{equation*}\label{definicion del splitting tensor}
        C_TX=-(\nabla_X T)^h,
    \end{equation*}
    where $h$ denotes the orthogonal projection on $\Gamma^\perp$. 
\end{defn}

For a non-degenerate deformation $g:M^n\rightarrow\mathbb{R}^{n+p}_{\mu}$ of $f$ (for some $0\leq \mu\leq p$), Remark \ref{remark de non degenerate} and Codazzi equation imply that 
\begin{equation}\label{beta y CT}
    \beta(C_SX,Y)=\beta(X,C_S Y),\quad\forall S\in\Gamma,\quad\forall X,Y\in\Gamma^\perp.
\end{equation}
We introduce the following definition to discard the ruled and surface-like type of situations.

\begin{defn}\label{f generica}
    \normalfont We call $M^n$ {\it generic} it there exists $T\in\Gamma$ such that $C_T$ is semisimple over $\mathbb{C}$. 
\end{defn}

Throughout this section we assume that $g:M^n\rightarrow\mathbb{R}^{n+p}_{\mu}$ is a non-degenerate deformation of $f$ and that $M^n$ is generic. We will classify all such deformations.

\begin{cor}\label{corolario generica implica S(beta)=todo y nulidad}
    Let $f:M^n\rightarrow\mathbb{R}^{n+1}$ be a generic hypersurface of rank $2\leq p+1< n$ and $g:M^n\rightarrow\mathbb{R}^{n+p}_{\mu}$ a non-degenerate deformation. Then, there exists a unique basis (up to order and scalar multiplication) $\{X_i\}_{i=0}^p\in\Gamma^\perp _{\mathbb{C}}$, such that $C_TX_i=\lambda_i(T)X_i\,\,\,\forall T\in\Gamma$. Moreover, for every non-degenerate deformation $g:M^n\rightarrow\mathbb{R}^{n+p}_{\mu}$ of $f$, we have that $\beta(X_i,X_j)=0$ for $i\neq j$.
\end{cor}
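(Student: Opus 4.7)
The plan is to use that $\Gamma$ is the nullity of the curvature tensor (hence totally geodesic with $R(T,S) = 0$ for $T, S \in \Gamma$), which forces $\{C_T\}_{T \in \Gamma}$ to form a commutative family of $\beta$-symmetric endomorphisms of $\Gamma^\perp$. I would first check that $[C_T, C_S] = 0$ for all $T, S \in \Gamma$: working along a $\Gamma$-leaf and using Ricci's identity together with $R(T, S) = 0$, one derives a Riccati-type relation for the splitting tensors whose integrability gives the asserted commutativity. This is standard for a totally geodesic nullity distribution and the formal derivation follows directly from the material recalled in the preliminaries.

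Next, by genericity pick $T_0 \in \Gamma$ with $C_{T_0}$ semisimple and consider its eigenspace decomposition $\Gamma^{\perp}_{\mathbb{C}} = \bigoplus_\alpha V_\alpha$. Commutativity with $C_{T_0}$ forces every $C_T$ to preserve each $V_\alpha$. Applying the $\beta$-symmetry recorded in (\ref{beta y CT}) to $C_{T_0}$: for $X \in V_\alpha$, $Y \in V_{\alpha'}$ with $\alpha \neq \alpha'$ we get $(\alpha - \alpha')\beta(X, Y) = 0$, so $\beta(V_\alpha, V_{\alpha'}) = 0$. Consequently $\mathcal{S}(\beta) = \bigoplus_\alpha \mathcal{S}(\beta|_{V_\alpha \times V_\alpha})$ is an orthogonal direct sum inside $W$.

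By Remark \ref{remark de non degenerate} applied to the non-degenerate deformation $g$, we have $\mathcal{S}(\beta) = W$ and $\Delta_\beta = \Gamma$. The orthogonality of the decomposition makes each piece $\mathcal{S}_\alpha := \mathcal{S}(\beta|_{V_\alpha \times V_\alpha})$ non-degenerate, while $\Delta_\beta = \Gamma$ implies that $\beta|_{V_\alpha \times V_\alpha}$ has trivial left nullity in $V_\alpha$. The Main Lemma (Lemma \ref{Main lemma}) applied to each $\beta|_{V_\alpha}$ then gives $\dim \mathcal{S}_\alpha \geq \dim V_\alpha$; summing across $\alpha$ and using $\dim W = p + 1 = \dim \Gamma^\perp_{\mathbb{C}}$ forces the equality $\dim \mathcal{S}_\alpha = \dim V_\alpha$ for every $\alpha$.

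To conclude I would iterate this analysis inside each $V_\alpha$: the restricted commuting family $\{C_T|_{V_\alpha}\}_{T \in \Gamma}$ is still $\beta|_{V_\alpha}$-symmetric, and a further genericity/Main-Lemma step forces the joint eigenspaces of $\{C_T\}_{T \in \Gamma}$ inside each $V_\alpha$ to be one-dimensional, yielding the basis $\{X_i\}_{i=0}^{p}$ of simultaneous eigenvectors with $C_T X_i = \lambda_i(T) X_i$. The relations $\beta(X_i, X_j) = 0$ for $i \neq j$ then follow from the same $\beta$-symmetry argument applied to the finer decomposition, and uniqueness up to order and scaling is immediate from one-dimensionality of the joint eigenspaces. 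The main obstacle is precisely this last step of forcing the joint eigenspaces to be one-dimensional; it requires the full strength of the genericity hypothesis together with the non-degeneracy provided by $g$, and is the place where flatness of $\beta$ enters most delicately, since merely having one semisimple $C_{T_0}$ in a commuting family does not, in general, imply simultaneous diagonalizability of the whole family.
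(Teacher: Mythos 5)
There is a genuine gap, at two places. First, the opening claim that $[C_T,C_S]=0$ for all $T,S\in\Gamma$ is ``standard for a totally geodesic nullity distribution'' is not justified: the identity that actually follows from $(R(X,T)S)^h=0$ is the Riccati-type equation $\nabla_T C_S=C_SC_T+C_{\nabla_TS}$ (it appears in the proof of Proposition \ref{demostracion Bajar coordenadas}), and this does not imply commutativity of the family. In this setting commutativity is only available \emph{a posteriori}, once every $C_T$ is known to be diagonal in the frame $\{X_i\}$ --- which is what you are trying to prove. Second, and more seriously, the final step --- that the joint eigenspaces of $\{C_T\}_{T\in\Gamma}$ inside each $V_\alpha$ are one-dimensional --- is precisely where the content of the corollary lies, and you explicitly leave it unproved. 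It is not merely delicate: if, say, $C_T|_{V_\alpha}=\lambda(T)\,\mathrm{Id}$ on a two-dimensional $V_\alpha$ (perfectly compatible with semisimplicity of $C_{T_0}$ and with commutativity), then no iteration of eigenspace decompositions of the $C_T$'s will ever split $V_\alpha$, so the route you propose cannot close without importing a different tool (diagonalizability of the flat form $\beta|_{V_\alpha}$ itself).

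The paper's proof avoids all of this by using the genericity hypothesis to pick $T_0\in\Gamma$ whose splitting tensor $C_{T_0}$ has $p+1$ \emph{distinct} eigenvalues on $\Gamma^\perp_{\mathbb{C}}$, with eigenvectors $X_i$. Then (\ref{beta y CT}) applied to $T_0$ gives $(\lambda_i-\lambda_j)\beta(X_i,X_j)=0$, hence $\beta(X_i,X_j)=0$ for $i\neq j$; since $\mathcal{S}(\beta)=W$ has dimension $p+1$ by Remark \ref{remark de non degenerate}, the vectors $\beta(X_i,X_i)$ form a basis of $W_{\mathbb{C}}$, and a second application of (\ref{beta y CT}) to an arbitrary $T$ forces $C_TX_i=\lambda_i(T)X_i$. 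Uniqueness of the frame up to order and scaling, and the fact that it diagonalizes $\beta$ for \emph{every} non-degenerate deformation, are then immediate. Your intermediate observations ($\beta(V_\alpha,V_{\alpha'})=0$ and the count $\dim\mathcal{S}_\alpha=\dim V_\alpha$, which really uses Lemma \ref{nulidad para no simetrica} rather than the Main Lemma, and needs care on the complex-conjugate eigenspaces since those lemmas are stated for real spaces) are sound in spirit, but they do not substitute for the missing final step.
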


\begin{proof}
    Take $T_0\in\Gamma$ such that the eigenvalues of $C_{T_0}$ are distinct, and $C_{T_0}X_i=\lambda_iX_i$. By (\ref{beta y CT}), $\beta(X_i,X_j)=0$ for $i\neq j$ and again by (\ref{beta y CT}) we get that $C_TX_i=\lambda_i(T)X_i$ for some 1-forms $\lambda_i$ on $\Gamma$. This proves that this frame is intrinsic and unique. Moreover, by (\ref{beta y CT}) this frame must diagonalize $\beta$ for all genuine deformations. 
\end{proof}

If $\{X_i\}_{i=0}^p$ are the diagonalizing directions of $\beta$ as above, then after a re-scaling factor, the frame $\{X_i\}$ projects at $L^{p+1}$ as coordinate vectors. More precisely, there exists a chart $(z_1,\ldots,z_s,x_{2s},\ldots, x_p)\in\mathbb{C}^{s}\times\mathbb{R}^{p+1-2s}$ (where $2s$ is the number of non-real eigenvectors of the splitting tensor) such that for the variables $(u_0,\ldots,u_p)=(z_1,\overline{z_1},\ldots, z_s,\overline{z_s},x_{2s},\ldots,x_{p})$ they satisfy
\begin{equation}\label{Bajar coordenadas}
    \partial_i\circ\pi:=\partial_{u_i}\circ\pi=\pi_{*}X_i.
\end{equation}
For a proof of this fact, see Proposition \ref{demostracion Bajar coordenadas} in the Appendix. This chart will be extensively used throughout this work.
These directions also define a conjugation of indices: we denote by $\overline{i}$ the unique index such that $\overline{X_i}=X_{\overline{i}}$. This conjugation will be used without further mention.
Notice also that this coordinate system is unique (up to order and rescale of variables).

Observe now that the set $\{\beta(X_j,X_j)\}_j$ is pointwise a $\mathbb{C}$-basis of $W_\mathbb{C}$. We extend the metrics and the connections of the tangent and normal bundles to their complexifications by $\mathbb{C}$-bilinearity. Then
\begin{equation*}\label{beta(i,i)neq 0}
    \langle\beta(X_i,X_i),\beta(X_i,X_i)\rangle \neq 0,\quad\forall i.
\end{equation*}
Indeed, if $\langle\beta(X_i,X_i),\beta(X_i,X_i)\rangle=0$ for some $i$, by flatness, $\langle\beta(X_i,X_i),\beta(X_j,X_j)\rangle=0$ for all $j$. Since $\mathcal{S}(\beta)=W$ we obtain that that $\beta(X_i,X_i)=0$, which is a contradiction.
Recalling that $\alpha^f(X_i,X_i)=\langle AX_i,X_i\rangle\rho\neq 0$, set
\begin{equation}\label{definion de tau_i}
    \varphi_i:=\frac{\langle\alpha^f(X_i,X_i),\alpha^f(X_i,X_i)\rangle}{\langle \beta(X_i,X_i),\beta(X_i,X_i)\rangle}\neq 0,
\end{equation}
and 
\begin{equation}\label{eta_i}
    \eta_i:=\frac{\alpha^g(X_i,X_i)}{\langle AX_i,X_i\rangle}\in\Gamma(T^\perp_g M\otimes\mathbb{C}).
\end{equation}
Notice that $\varphi_i$ and $\eta_i$ are independent if we change $X_i$ by $\mu_i X_i$ for any $\mu_i\neq 0$. By the flatness of $\beta$, 
\begin{equation}\label{d_ij definicion}
    d_{ij}:=\langle\eta_i,\eta_j\rangle=1+\frac{\delta_{ij}}{\varphi_i},
\end{equation}
where $\delta_{ij}$ is the Kronecker symbol. Since the $p+1$ vectors $\eta_i$ generate the normal of $g$ which has dimension $p$, the matrix $(D_\varphi)_{ij}=d_{ij}$ must be singular. By Lemma \ref{Lema del determinante de D} this is equivalent to 
\begin{equation}\label{suma=-1}
    \varphi_{*}:=-(\varphi_0+\ldots+\varphi_p+1)=0.
\end{equation}
With this, we can verify that
\begin{equation}\label{suma de varphi_i eta_i=0}
    \varphi_0\eta_0+\ldots+\varphi_p\eta_p=0,
\end{equation}
since $\langle\sum_j\varphi_j\eta_j,\eta_k\rangle=\sum_j\varphi_j(1+\frac{\delta_{jk}}{\varphi_k})=0$ for all $k$.
\begin{defn}\label{definicion de ser singular}
    \normalfont We call a tuple $\varphi=(\varphi_i)_{i=0}^p$ {\it admissible} if $\overline{\varphi_i}=\varphi_{\overline{i}}\neq 0$ for all $i$ and satisfies $\varphi_{*}=0$. In this case we denote by
    $2s$ and $P$ the cardinality of the sets $\{i\in\{0,\ldots,p\}|i\neq\overline{i}\}$ and $\{i\in\{0,\ldots,p\}|i=\overline{i}\,\text{ and }\,\varphi_i>0\}$ respectively. We call $p-(s+P)$ the {\it index} of $\varphi$.
\end{defn}

Thus, the collection of functions $\varphi=(\varphi_i)_{i=0}^p$ defined by (\ref{definion de tau_i}) is admissible. 
Moreover, Proposition \ref{singular signature es el indice de la metrica} of the Appendix shows that the index of $\varphi$ is precisely the index $\mu$ of the metric in the ambient space of $g:M^{n}\rightarrow\mathbb{R}^{n+p}_{\mu}$. 

By Codazzi equation for $\alpha$ and $A$, we have that
\begin{equation}\label{eta es delta paralel}
    \nabla^\perp_T\eta_i=0,\quad\forall T\in\Gamma.
\end{equation}
Indeed,
\begin{align*}
    \nabla_T^\perp\eta_i&=\dfrac{\langle AX_i,X_i\rangle(\alpha([T,X_i],X_i)+\alpha(\nabla_TX_i,X_i))-(\langle A[T,X_i],X_i\rangle+\langle A\nabla_TX_i,X_i\rangle)\alpha(X_i,X_i)}{\langle AX_i,X_i\rangle^2}\\
    &=\dfrac{\langle AX_i,X_i\rangle(\langle A[T,X_i],X_i\rangle+\langle A\nabla_TX_i,X_i\rangle)\eta_i-(\langle A[T,X_i],X_i\rangle+\langle A\nabla_TX_i,X_i\rangle)\langle AX_i,X_i\rangle\eta_i}{\langle AX_i,X_i\rangle^2}=0.
\end{align*}
As a consequence of (\ref{eta es delta paralel}) and (\ref{d_ij definicion}), $T(\varphi_i)=0$ for all $i$ and $T\in\Gamma$.

For each $\eta\in (T^\perp _g M)_{\mathbb{C}}$ we define
\begin{equation}\label{definicion de D_eta}
    D_{\eta}=A^{-1}A_{\eta}:\Gamma_{\mathbb{C}}^\perp\rightarrow\Gamma_{\mathbb{C}}^\perp,
\end{equation}
where $A$ is the second fundamental form of $f$ restricted to $\Gamma_{\mathbb{C}}^\perp$ and $A_{\eta}$ is the shape operator of $g$ in the $\eta$ direction also restricted to $\Gamma_{\mathbb{C}}^\perp$.
Since $0=\langle A_\eta X_i,X_j\rangle=\langle AD_\eta X_i,X_j\rangle$ for $i\neq j$, $D_\eta$ is diagonalizable with the same basis $\{X_i\}$. In particular, for $D_i:=D_{\eta_i}$ the Gauss equation implies that
$$D_iX_j=d_{ij}X_j,$$ 
where $d_{ij}$ is defined in (\ref{d_ij definicion}). 

As shown in Lemma 15 of \cite{DFT} we have
\begin{equation}\label{condion para que los Di bajen}
    \nabla_TD_i=[D_i,C_T]=0\quad\forall T\in\Gamma\quad \forall i.
\end{equation}
This motivates the following definition.
\begin{defn}\label{definicion D-system}
    \normalfont Consider a Riemannian manifold $M^n$ of rank $(p+1)\geq 2$. We call a set of smooth tensors $D_i:\Gamma^\perp_{\mathbb{C}}\rightarrow\Gamma^\perp_{\mathbb{C}}$, $i=0,\ldots,p$, a $D${\it -system} if there is a conjugation of indices such that $\overline{D_i}=D_{\overline{i}}$ and the following conditions are satisfied:
    \begin{enumerate}[$i)$]
        \item $\dim_{\mathbb{C}}\ker(D_i-I)=p$, where $I$ is the identity. We denote by $(\frac{1}{\varphi_i}+1)\neq 1$ the remaining eigenvalue of $D_i$ and $X_i$ an associated eigenvector;
        \item $X_j\in\ker(D_i-I)$ for all $j\neq i$;
        \item $\nabla_TD_i=[D_i,C_T]=0\quad\forall T\in\Gamma\quad\forall i$.\label{asdsa}
    \end{enumerate}
\end{defn}
\begin{remark}
    Whenever convenient, we will consider $D_i:(TM)_\mathbb{C}\rightarrow (TM)_\mathbb{C}$ by extending it as zero on $\Gamma_{\mathbb{C}}$. 
\end{remark}
\begin{remark}
    There may be several $D$-systems on $M^n$, but if $M^n$ is generic, then the directions are uniquely determined since the $X_i$'s must also be eigenvectors of the splitting tensor by condition \ref{asdsa}. However, we still have some freedom on the $\varphi_i$'s which determine the $D$-system.
\end{remark}

Let $\phi_{ij}$ be the associated normal connection 1-forms
\begin{equation}\label{definicio de formas de coneccion normal}
    \phi_{ij}(X)=\langle\nabla^\perp_X \eta_i,\eta_j\rangle.
\end{equation}
Clearly $\phi_{ii}=\frac{1}{2}d(\frac{1}{\varphi_i})$ and $\phi_{ij}=-\phi_{ji}$ for $i\neq j$. We denote by $\phi=(\phi_{ij})$ the matrix of 1-forms whose components are $\phi_{ij}$. We can express the normal connection as
\begin{equation}\label{derivada normal de etai}
    \nabla_X^\perp\eta_i=\sum_{j}\phi_{ij}(X)\varphi_j\eta_j.
\end{equation}
Indeed, this is a consequence of (\ref{suma de varphi_i eta_i=0}) and
$$\Big\langle\sum_{j}\phi_{ij}(X)\varphi_j\eta_j,\varphi_k\Big\rangle=\varphi_{ik}(X)+\Big\langle\nabla^\perp _X\eta_i,\sum_j\varphi_j\eta_j\Big\rangle=\phi_{ik}(X),\quad\forall k.$$

The next result gives a bijection between the set of non-degenerate deformations of $f$ in codimension $p$ and the set of pairs $(D,\phi)$ satisfying certain equations. 
\begin{prop}\label{caracterizacion en M dim p+q+1}
    Consider a simply connected generic hypersurface $f:M^n\rightarrow\mathbb{R}^{n+1}$ of rank 
    $2\leq p+1<n$. Let $g:M^n\rightarrow\mathbb{R}^{n+p}_{\mu}$ be a non-degenerate deformation of $f$ (for some $0\leq \mu\leq p$). Then there exist a $D$-system and a $(p+1)\times(p+1)$ matrix of 1-forms $\phi=(\phi_{ij})$ satisfying: 
    \begin{enumerate}[a)]
        \item $\varphi$ is admissible of index $\mu$;\label{singular y def positivo en M}
        \item $\overline{\phi_{ij}(X)}=\phi_{\overline{i}\,\overline{j}}(\overline{X})$\label{varphi y phi admisible};
        \item $AD_i=D_i^t A$; \label{A_i es simetrico caso p+q+1 en M}
        \item $\sum_k\varphi_k\phi_{ik}=0$, $\forall i$; \label{kernel de S esta en el kernel de phi}
        \item $\phi_{ij}+\phi_{ji}=0$ for $i\neq j$ and $\phi_{ii}=\frac{1}{2}d(\frac{1}{\varphi_i})$; \label{phi_ij+phi_ji caso p+q+1 en M} 
        \item $\phi_{ij}(T)=d\phi_{ij}(Z,T)=0$ for any $Z$ and $T\in\Gamma$;\label{phi_ij baja}
        \item $\nabla_X(AD_i)Y-\nabla_Y(AD_i)X=A\Big(\sum_{j}\varphi_j(\phi_{ij}\wedge D_j)(X,Y)\Big),\quad\forall i,X,Y\in TM$; \label{Codazzi caso p+q+1 en M}
        \item $\langle [AD_i,AD_j]X,Y\rangle=d\phi_{ij}(X,Y)+\Omega_{ij}(X,Y)$, $\forall i,j$ and $X,Y\in TM$, where $\Omega=(\Omega_{ij})$ is the matrix of 2-forms given by 
        $\Omega_{ij}=\sum_k\varphi_k(\phi_{ik}\wedge\phi_{jk})$.\label{Ricci caso p+q+1 en M}
    \end{enumerate}
    Conversely, suppose that we have a $D$-system and a $(p+1)\times(p+1)$ matrix of $1$-forms $\phi=(\phi_{ij})$ satisfying the conditions \ref{singular y def positivo en M} to \ref{Ricci caso p+q+1 en M} above. Then, there exists an isometric immersion $g=g_{(D,\phi)}:M^n\rightarrow\mathbb{R}^{n+p}_{\mu}$ which is a genuine deformation of $f$ determined by $D$ and $\phi$.
    Moreover, given two pairs $(D,\phi),(\hat{D},\hat{\phi})$ that satisfy the above properties, then $g_{(D,\phi)}$ and $\hat{g}_{(\hat{D},\hat{\phi})}$ are congruent if and only if $(D,\phi)=(\hat{D},\hat{\phi})$.
\end{prop}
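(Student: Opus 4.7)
The plan is to handle the forward and converse implications separately, and close with uniqueness.

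For the forward direction, fix the (essentially unique) diagonalizing frame $\{X_i\}_{i=0}^p$ of $\Gamma_{\mathbb{C}}^\perp$ from Corollary \ref{corolario generica implica S(beta)=todo y nulidad}, and read off $\eta_i$, $\varphi_i$, $D_i=A^{-1}A_{\eta_i}$ and $\phi_{ij}$ from equations (\ref{eta_i}), (\ref{definion de tau_i}) and (\ref{definicio de formas de coneccion normal}). That $\{D_i\}$ is a $D$-system follows from $D_iX_j=d_{ij}X_j$ combined with (\ref{condion para que los Di bajen}). Items a) and b) encode the admissibility and reality of $\varphi$ (Definition \ref{definicion de ser singular} and Proposition \ref{singular signature es el indice de la metrica}); c) is the symmetry of $A_{\eta_i}$; e) is by definition; and f) comes from (\ref{eta es delta paralel}) and $T(\varphi_i)=0$ for $T\in\Gamma$. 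For d), differentiate the relation $\sum_k\varphi_k\eta_k=0$ of (\ref{suma de varphi_i eta_i=0}), substitute (\ref{derivada normal de etai}), and pair with $\eta_i$; antisymmetry of the off-diagonal $\phi_{ij}$ together with $\varphi_*=0$ collapses the resulting expression to $\sum_k\varphi_k\phi_{ik}=0$. Items g) and h) come, respectively, from the Codazzi identity $\nabla_X(A_{\eta_i}Y)-A_{\eta_i}\nabla_XY-A_{\nabla^\perp_X\eta_i}Y$ being symmetric in $X,Y$, and from the Ricci identity $\langle R^\perp(X,Y)\eta_i,\eta_j\rangle=\langle[A_{\eta_i},A_{\eta_j}]X,Y\rangle$: after substituting $A_{\eta_i}=AD_i$ and (\ref{derivada normal de etai}), item d) makes the double sums in $\phi$ collapse to produce exactly $A(\sum_j\varphi_j\phi_{ij}\wedge D_j)$ on one side and $d\phi_{ij}+\Omega_{ij}$ on the other.

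For the converse, given abstract $(D,\phi)$ satisfying a)--h), I would build the normal bundle of the prospective $g$ from scratch and invoke the fundamental theorem of submanifolds (Bonnet) in $\mathbb{R}^{n+p}_{\mu}$. Admissibility of $\varphi$, together with Lemma \ref{Lema del determinante de D} and Proposition \ref{singular signature es el indice de la metrica}, shows that the symmetric matrix $(d_{ij})=(1+\delta_{ij}/\varphi_i)$ has a one-dimensional kernel spanned by $(\varphi_0,\ldots,\varphi_p)$ and induces a non-degenerate form of signature $(p-\mu,\mu)$ on the quotient. Declare $T^\perp_gM$ to be the rank-$p$ complex bundle generated by abstract sections $\eta_0,\ldots,\eta_p$ subject only to $\sum_i\varphi_i\eta_i=0$, equipped with the quotient metric, the real structure induced by the conjugation of indices (b) making it a genuine real bundle of signature $(p-\mu,\mu)$), and the connection $\nabla^\perp_X\eta_i=\sum_j\phi_{ij}(X)\varphi_j\eta_j$ (well-defined modulo the relation by d) and metric-compatible by e) together with $\phi_{ij}+\phi_{ji}=0$). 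Define the prospective second fundamental form by $\alpha(X_i,X_j)=\delta_{ij}\langle AX_i,X_i\rangle\eta_i$ on $\Gamma_{\mathbb{C}}^\perp$ and extend by zero on $\Gamma$; property c) makes it symmetric and real.

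It remains to verify the three fundamental equations for the candidate pair $(\alpha,\nabla^\perp)$. The Gauss equation reduces to that of $f$ because for $i\ne j$, $\langle\alpha(X_i,X_i),\alpha(X_j,X_j)\rangle=\langle AX_i,X_i\rangle\langle AX_j,X_j\rangle d_{ij}=\langle AX_i,X_i\rangle\langle AX_j,X_j\rangle$; Codazzi and Ricci are exactly items g) and h). Bonnet then produces the desired immersion $g:M^n\to\mathbb{R}^{n+p}_{\mu}$ with $\alpha^g=\alpha$. Genuineness follows because, by Lemma \ref{nulidad para no simetrica} and the structure of $d_{ij}$, $\Delta_\beta=\Gamma$, so any isometric extension of $\{f,g\}$ would force a strictly larger shared nullity, contradicting the rank of $f$. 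For uniqueness, if $(\hat D,\hat\phi)$ also produces $\hat g\cong g$, then Corollary \ref{corolario generica implica S(beta)=todo y nulidad} and genericity recover the intrinsic frame $\{X_i\}$, the scalars $\varphi_i$, and the connection forms $\phi_{ij}$ from $g$, forcing $(D,\phi)=(\hat D,\hat\phi)$; the converse uses Bonnet's uniqueness clause.

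The main obstacle is the Ricci step. In the forward direction it requires careful bookkeeping to combine d), the antisymmetry of $\phi_{ij}$, and $\phi_{ii}=\tfrac12 d(1/\varphi_i)$ in order to recover exactly $\Omega_{ij}=\sum_k\varphi_k(\phi_{ik}\wedge\phi_{jk})$; in the converse, one must verify that the quotient normal bundle carries a compatible metric connection with the correct curvature, which is the point where all of a), b), d), e), and h) have to interact simultaneously.
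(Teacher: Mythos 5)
Your proposal follows essentially the same route as the paper: the forward direction reads off $(D,\phi)$ from the $\eta_i$, $\varphi_i$ and the normal connection exactly as in the discussion preceding the proposition, and the converse builds the normal bundle as the quotient of the trivial bundle $\mathbb{C}^{p+1}$ by $\mathrm{span}\{\sum_i\varphi_i e_i\}$ with metric $d_{ij}$, connection $\nabla_X e_i=\sum_j\varphi_j\phi_{ij}(X)e_j$ and a second fundamental form diagonalized by the $X_i$, then invokes the fundamental theorem of submanifolds, with uniqueness coming from the induced parallel bundle isometry $\eta_i\mapsto\hat\eta_i$ --- all as in the paper. The one place your justification drifts is genuineness: the correct mechanism is that $\mathcal{S}(\beta)=W$ is non-degenerate (so $g$ cannot be a composition $h\circ f$, by Corollary 2 of \cite{Moo}), not that an isometric extension would enlarge $\Delta_\beta$; the paper itself only records that $\beta^{X}$ is an isomorphism for $X=\sum_iX_i$, which yields exactly this non-degeneracy.
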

\begin{proof}
    We have already proved that if $g:M^n\rightarrow\mathbb{R}^{n+p}_{\mu}$ is a deformation for $f$, then there is such a pair $(D,\phi)$ satisfying all the above properties. Indeed, observe that $AD_i=A_{\eta_i}$ is a symmetric tensor, \ref{Codazzi caso p+q+1 en M} is Codazzi equation for $A_{\eta_i}$, and \ref{Ricci caso p+q+1 en M} is just Ricci equation expressed as
    $$\langle R^\perp(X,Y)\eta_i,\eta_j\rangle=X\langle\nabla_Y^\perp \eta_i,\eta_j\rangle-Y\langle\nabla_X^\perp \eta_i,\eta_j\rangle-\langle\nabla_{[X,Y]}^\perp \eta_i,\eta_j\rangle+\langle\nabla_X^\perp\eta_i,\nabla_Y^\perp\eta_j\rangle-\langle\nabla_Y^\perp\eta_i,\nabla_X^\perp\eta_j\rangle.$$
    Moreover, if $g:M^n\rightarrow\mathbb{R}^{n+p}_{\mu}$ and $\hat{g}:M^n\rightarrow\mathbb{R}^{n+p}_{\mu}$ are two isometric immersion with the same associated pair $(D,\phi)$, then they are congruent. 
    Indeed, define $t:(T^\perp _gM)_{\mathbb{C}}\rightarrow (T^\perp _{\hat{g}}M)_{\mathbb{C}}$ by $t(\eta_i)=\hat{\eta}_i$, where the $\eta_i$'s are defined by (\ref{eta_i}), and similarly for the $\hat{\eta}_i$'s. It is easy to verify that $t$ is a well defined parallel bundle isometry which preserves the respective second fundamental form, $t\circ\alpha^g=\alpha^{\hat{g}}$. By the Fundamental Theorem of submanifolds this map induces an isometry $T:\mathbb{R}^{n+p}_{\mu}\rightarrow\mathbb{R}^{n+p}_{\mu}$ such that $\hat{g}=T\circ g$.
    
    Let us prove the converse. The main idea is to consider the bundle $E=\mathbb{C}^{p+1}/\ker(D_\varphi)\rightarrow M^n$ as a candidate to be the complexification of the normal bundle for $g$ and use the pair $(D,\phi)$ to define a second fundamental form, a metric and a connection on $E$. Then, the Fundamental Theorem of submanifolds will imply the existence of $g$. We denote the elements of $E\rightarrow M$ with brackets to differentiate them from those of $\mathbb{C}^{p+1}$. 
    
    Consider on $E$ the bilinear product defined by $\langle [e_i],[e_j]\rangle= d_{ij}=1+\frac{\delta_{ij}}{\varphi_i}$. By Proposition \ref{singular signature es el indice de la metrica} of the Appendix, this defines a non-degenerate inner product on the real bundle $\text{Re}_{C}(E)\rightarrow M^n$ of index $\mu$, where the conjugation is given on the canonical basis by $C([e_i])=[e_{\overline{i}}]$.
    
    Equation (\ref{derivada normal de etai}) induces the connection $\tilde{\nabla}_X e_i=\sum_j\varphi_j\phi_{ij}(X)e_j$ on the trivial bundle $\mathbb{C}^{p+1}\rightarrow M$. This connection descends to the quotient $E$. Indeed, using \ref{kernel de S esta en el kernel de phi},  \ref{phi_ij+phi_ji caso p+q+1 en M} and (\ref{kernel de S}) we get
    \begin{equation*}
        \tilde{\nabla}_X\Big(\sum_j\varphi_j e_j\Big)=\sum_k\Big(X(\varphi_k)+\varphi_k\big(\sum_j\varphi_j\phi_{jk}(X)\big)\Big)e_k=\sum_k\Big(X(\varphi_k)+\varphi_k(2\varphi_k\phi_{kk}(X))\Big)e_k=0.
    \end{equation*}
    Thus, $\nabla^E_X[e_i]=\sum_j \varphi_j\phi_{ij}(X)[e_j]$ is a well defined connection on $E\rightarrow M^n$. By \ref{phi_ij+phi_ji caso p+q+1 en M}, this connection is compatible with the product induced by $D_\varphi$. Indeed, notice that 
    \begin{align*}
        \langle\nabla^E_X[e_i],[e_j]\rangle=\sum_k \varphi_k\phi_{ik}(X)d_{kj}=\phi_{ij}(X)+\sum_k \varphi_k\phi_{ik}(X)=\phi_{ij}(X),
    \end{align*}
    and then $\langle\nabla^E_X[e_i],[e_j]\rangle+\langle[e_i],\nabla^E_X[e_j]\rangle=\phi_{ij}(X)+\phi_{ji}(X)=X(d_{ij})=X\langle[e_i],[e_j]\rangle$.
    
    For $X,Y\in (T_xM)_{\mathbb{C}}$ we define the linear map $\ell_{X,Y}:\mathbb{C}^{p+1}\rightarrow\mathbb{C}$ by $\ell_{X,Y}(e_i)=\langle AD_iX,Y\rangle$. Then, by (\ref{kernel de S}),
    \begin{equation*}
        \ell_{X,Y}\Big(\sum_j\varphi_je_j\Big)=\Big\langle A\Big(\sum_j D_j\varphi_j\Big)X,Y\Big\rangle=0.
    \end{equation*}
    Thus there exists a unique $\gamma(X,Y)\in E$ such that $\langle\gamma(X,Y),[e_i]\rangle=\langle AD_iX,Y\rangle$ for all $i$. This tensor $\gamma$ is symmetric by \ref{A_i es simetrico caso p+q+1 en M} and by definition $\Gamma\subseteq\Delta_\gamma$. Observe that
    \begin{equation}\label{gamma(X_i,X_i) diagonaliza}
        \gamma(X_i,X_i)=\langle AX_i,X_i\rangle [e_i]\quad\forall i,
    \end{equation}
    \begin{equation}\label{gamma(i,j)=0}
        \gamma(X_i,X_j)=0 \quad\forall i\neq j,
    \end{equation}
    since 
    $$\langle\langle AX_i,X_i\rangle [e_i],[e_k]\rangle=\langle AX_i,X_i\rangle d_{ik}=\langle\gamma(X_i,X_i),[e_k]\rangle\quad\forall k,$$  $$\langle\gamma(X_i,X_j),[e_k]\rangle=\langle AD_kX_i,X_k\rangle=d_{ki}\langle AX_i,X_j\rangle=0\quad\forall k.$$
    Equations (\ref{gamma(X_i,X_i) diagonaliza}) and (\ref{gamma(i,j)=0}) show that $\Delta_\gamma=\Gamma$, $\{X_i\}_{i=0}^p$ diagonalizes $\gamma$, and $\mathcal{S}(\beta)=E\oplus T^\perp _fM$ where $\beta=\gamma\oplus\alpha^f$.
    Notice that
    $$\langle\gamma(X_i,X_i),\gamma(X_j,X_j)\rangle=\langle AX_i,X_i\rangle\langle AX_j,X_j\rangle d_{ij}=\langle AX_i,X_i\rangle\langle AX_j,X_j\rangle,\quad\forall i\neq j.$$
    This proves that $\gamma$ satisfies Gauss equation on $(TM)_{\mathbb{C}}$ since all the other Gauss equations are trivially satisfied since $\{X_0,\ldots,X_p\}$ is a basis of $\Gamma^\perp_\mathbb{C}$ which simultaneously diagonalizes $\gamma$ and $\alpha^f$. 
    
    To verify that $\gamma$ is a Codazzi tensor, just observe that, for all $X,Y,Z$, we have
    \begin{align*}
        \langle(\nabla^E_X\gamma)(Y,Z),[e_i]\rangle&=X(\langle\gamma(Y,Z),[e_i]\rangle)-\langle\gamma(\nabla_X Y,Z),[e_i]\rangle-\langle\gamma(Y,\nabla_X Z),[e_i]\rangle-\langle\gamma(Y,Z),\nabla^E_X[e_i]\rangle\\
        &=X(\langle AD_iY,Z\rangle)-\langle AD_i\nabla_X Y,Z\rangle-\langle AD_iY,\nabla_XZ\rangle-\sum_{j}\varphi_j\phi_{ij}(X)\langle AD_j Y,Z\rangle\\
        &=\langle\nabla_X(AD_i)Y,Z\rangle-\sum_{j}\varphi_j\phi_{ij}(X)\langle AD_j Y,Z\rangle.
    \end{align*}
    This expression is symmetric for $X,Y$ by \ref{Codazzi caso p+q+1 en M}.
    
    Lastly, Ricci equation follows from
    \begin{align*}
        \langle R(X,Y)[e_i],[e_j]\rangle&=X(\langle\nabla^E_Y[e_i],[e_j]\rangle)-Y(\langle\nabla^E_X[e_i],[e_j]\rangle)-\langle\nabla^E_{[X,Y]}[e_i],[e_j]\rangle\\
        &\quad-\langle\nabla^E_Y[e_i],\nabla^E_X[e_j]\rangle+\langle\nabla^E_X[e_i],\nabla^E_Y[e_j]\rangle\\
        =&\, d\phi_{ij}(X,Y)+\Big\langle\sum_k \nabla^E_X[e_i],\sum_k \varphi_k\phi_{jk}(Y)[e_k]\Big\rangle-\Big\langle\nabla^E_Y[e_i],\sum_k \varphi_k\phi_{jk}(X)[e_k]\Big\rangle\\
        =&\, d\phi_{ij}(X,Y)+\Omega_{ij}(X,Y)=\langle[AD_i,AD_j]X,Y\rangle.
    \end{align*}
    
    We conclude from the Fundamental Theorem of submanifolds that there exists an isometric immersion $g=g_{(D,\phi)}:M^n\rightarrow\mathbb{R}^{n+p}_{\mu}$ such that the complexification of the normal bundle is $(E,\nabla^E)$ and the second fundamental form of $g$ is $\gamma$, up to a parallel isometry of vector bundles. Moreover, $g$ is a non-degenerate deformation, since $X=\sum_i X_i\in TM$ verifies that $\beta^X:\Gamma^\perp\rightarrow W$ is an isomorphism.
\end{proof}
\subsection{Projecting to the nullity leaf space}
Since we now have a description of the genuine deformations in terms of pairs $(D,\phi)$, we proceed to reduce the problem to the nullity leaf space $L^{p+1}=M^n/\Gamma$, and characterize each condition of Proposition \ref{caracterizacion en M dim p+q+1} in terms of $\varphi$ and the Gauss parametrization data $(h,\gamma)$ of the hypersurface $f$.
\\\text{ }

First, we translate Proposition \ref{caracterizacion en M dim p+q+1} to the leaf space, which is a crucial point in our argument. We denote by $\langle\cdot,\cdot\rangle'$ and $\nabla'$ the metric and the connection induced by the Gauss map $h:L^{p+1}\rightarrow\mathbb{S}^n$. 

\begin{prop}\label{Caracterizacion en Lp+q+1}
    Let $f:M^n\rightarrow\mathbb{R}^{n+1}$ be a rank $(p+1)$ hypersurface. Consider the nullity leaf space $\pi:M^n\rightarrow L^{p+1}=M^n/\Gamma$, and $\gamma\in C^\infty(L^{p+1})$, $h:L^{p+1}\rightarrow\mathbb{S}^n$ the Gauss parametrization data of $f$. If $(D,\phi)$ is a pair on $M^n$ as in Proposition \ref{caracterizacion en M dim p+q+1}, then there is an induced pair $(\hat{D},\hat{\phi})$ on $L^{p+1}$ such that 
    $$\hat{\varphi}_i\circ\pi=\varphi_i,\quad\hat{D}_i\circ\pi_{*}=\pi_{*}\circ D_i,\quad\hat{\phi}_{ij}\circ\pi_{*}=\phi_{ij}.$$
    In addition, $(\hat{D},\hat{\phi})$ satisfies for $\pi_{*}X=\hat{X},\pi_{*}Y=\hat{Y}\in TL$:
    \begin{enumerate}[i)]
        \item $\hat{\varphi}$ is admissible of index $\mu$;\label{dsucc 0 en Lp}
        \item $\overline{\hat{\phi}_{ij}(X)}=\hat{\phi}_{\overline{i}\,\overline{j}}(\overline{\hat{X}})$;\label{varphi y phi admisibles en L}
        \item $(\text{Hess}_\gamma+\gamma I)\hat{D}_i=\hat{D}_i^t(\text{Hess}_\gamma+\gamma I)$;\label{hess gamma D simetrico caso p+q+1}
        \item $\alpha^h(\hat{D}_i \hat{X},\hat{Y})=\alpha^h(\hat{X},\hat{D}_i \hat{Y})$;\label{alpha Di p+q+1}
        \item $\sum_k\hat{\varphi}_{k}\hat{\phi}_{ik}=0,\quad \forall i$; \label{kernel de S esta en el kernel de phi en Lp+1}
        \item $\hat{\phi}_{ij}+\hat{\phi}_{ji}=0$ for $i\neq j$ and $\hat{\phi}_{ii}=\frac{1}{2}d(\frac{1}{\hat{\varphi}_i})$; \label{compatibilidad de conexion en Lp+q+1}
        \item $(\nabla'_{\hat{X}}\hat{D}_i)\hat{Y}-(\nabla'_{\hat{Y}}\hat{D}_i)\hat{X}=\sum_{j}\hat{\varphi}_j(\hat{\phi}_{ij}\wedge\hat{D}_j)(\hat{X},\hat{Y}),\quad\forall i$;\label{codazzi en Lp+q+1}
        
        \item $\langle \hat{D}_j\hat{X},\hat{D}_i \hat{Y}\rangle'-\langle \hat{D}_i \hat{X},\hat{D}_j \hat{Y}\rangle'=d\hat{\phi}_{ij}(\hat{X},\hat{Y})+\hat{\Omega}_{ij}(\hat{X},\hat{Y})$ where $\hat{\Omega}_{ij}\circ\pi_{*}=\Omega_{ij}$. \label{Ricci en L p+q+1}
    \end{enumerate}
    Conversely, if $(h,\gamma)$ and $(\hat{D},\hat{\phi})$ satisfy \ref{dsucc 0 en Lp}-\ref{Ricci en L p+q+1} above, then they give rise, via the Gauss parametrization, to a hypersurface $f$ and a pair $(D,\phi)$ satisfying Proposition \ref{caracterizacion en M dim p+q+1}.
\end{prop}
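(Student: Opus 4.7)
The strategy is to show that the tensors $D_i$, $\phi_{ij}$ and $\varphi_i$ established in Proposition~\ref{caracterizacion en M dim p+q+1} are \emph{basic} with respect to the nullity foliation, and therefore descend to $L^{p+1}$; then to translate each of conditions \ref{singular y def positivo en M}--\ref{Ricci caso p+q+1 en M} into its counterpart on $L^{p+1}$ using the Gauss parametrization \eqref{eq:parametrizacion de gauss}.

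First I would establish the descent. Condition \ref{phi_ij baja}, namely $\phi_{ij}(T)=d\phi_{ij}(Z,T)=0$ for $T\in\Gamma$ and $Z\in TM$, is exactly the condition for $\phi_{ij}$ to be the pullback of a $1$-form on $L^{p+1}$. Item iii) of Definition~\ref{definicion D-system}, namely $\nabla_T D_i=[D_i,C_T]=0$, together with the fact that $C_T$ encodes the infinitesimal $\Gamma$-action on $\Gamma^\perp$ and that $\Gamma$ is totally geodesic, guarantees that $D_i$ descends to an endomorphism $\hat D_i$ of $TL^{p+1}$. The equation $T(\varphi_i)=0$ established just after \eqref{eta es delta paralel} implies that the $\varphi_i$ descend to $\hat\varphi_i$. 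The conjugation of indices descends as well, so items \ref{dsucc 0 en Lp}, \ref{varphi y phi admisibles en L}, \ref{kernel de S esta en el kernel de phi en Lp+1} and \ref{compatibilidad de conexion en Lp+q+1} follow directly from \ref{singular y def positivo en M}, \ref{varphi y phi admisible}, \ref{kernel de S esta en el kernel de phi} and \ref{phi_ij+phi_ji caso p+q+1 en M}.

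The heart of the argument is the splitting of the symmetry condition \ref{A_i es simetrico caso p+q+1 en M} into the decoupled conditions \ref{hess gamma D simetrico caso p+q+1} and \ref{alpha Di p+q+1}. At a point $(x,w)\in T^\perp_h L^{p+1}\cong M^n$, the Gauss parametrization expresses the shape operator $A$ of $f$ on $\Gamma_{(x,w)}^\perp$, identified with $T_xL^{p+1}$ via $\pi_*$, as the inverse (up to sign) of an operator whose $w$-independent part is $\mathrm{Hess}_\gamma+\gamma I$ and whose linear-in-$w$ part involves the shape operator $A_w^h$ of $h:L^{p+1}\to\mathbb{S}^n$. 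Since $\hat D_i$ is basic (hence $w$-independent), requiring $AD_i=D_i^tA$ at every $(x,w)$ forces the $w$-independent and the linear-in-$w$ parts to satisfy the symmetry separately, yielding \ref{hess gamma D simetrico caso p+q+1} and the condition $A_w^h\hat D_i=\hat D_i^tA_w^h$ for every $w\in T^\perp_hL^{p+1}$; the latter, by polarization, is equivalent to the symmetry statement \ref{alpha Di p+q+1} for $\alpha^h$.

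For conditions \ref{codazzi en Lp+q+1} and \ref{Ricci en L p+q+1}, I would project the Codazzi equation \ref{Codazzi caso p+q+1 en M} and the Ricci equation \ref{Ricci caso p+q+1 en M} to $L^{p+1}$, using that on basic tensors the Levi-Civita connection of $M^n$ restricted to $\Gamma^\perp$ is intertwined by $\pi_*$ with the Levi-Civita connection $\nabla'$ on $L^{p+1}$ induced by $h$. For the converse, given $(h,\gamma)$ together with $(\hat D,\hat \phi)$ on $L^{p+1}$ satisfying \ref{dsucc 0 en Lp}--\ref{Ricci en L p+q+1}, I would construct $f$ via \eqref{eq:parametrizacion de gauss} and lift $(\hat D,\hat \phi)$ to $(D,\phi)$ on $M^n$ by pullback through $\pi$, then verify \ref{singular y def positivo en M}--\ref{Ricci caso p+q+1 en M} by reversing the above translation. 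I expect the main obstacle to be the careful decoupling of \ref{A_i es simetrico caso p+q+1 en M} into its two separate $L^{p+1}$-counterparts, and checking that the pulled-back $\phi$ indeed satisfies the full vanishing condition $d\phi_{ij}(Z,T)=0$ of \ref{phi_ij baja}; the remaining verifications are routine bookkeeping with the standard Gauss parametrization formulas.
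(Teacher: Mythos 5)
Your overall architecture coincides with the paper's: both proofs first observe that $D_i$, $\varphi$ and $\phi$ are basic (the paper quotes Corollary 12 of \cite{DFT} together with condition \ref{phi_ij baja} and item $iii)$ of Definition \ref{definicion D-system}) and then translate each condition of Proposition \ref{caracterizacion en M dim p+q+1} through the Gauss parametrization, with the converse obtained by lifting. The genuinely different step is how you obtain \ref{hess gamma D simetrico caso p+q+1} and \ref{alpha Di p+q+1}. You extract both from the single symmetry condition \ref{A_i es simetrico caso p+q+1 en M} by letting $w$ vary along the nullity fiber: since $\hat D_i$ is $w$-independent and the symmetry of $AD_i$ translates into $\hat D_i^tP_w=P_w\hat D_i$ with $P_w=\text{Hess}_\gamma+\gamma I-B_w$ affine in $w$, the identity on an open subset of the fiber decouples into its $w$-free part (giving \ref{hess gamma D simetrico caso p+q+1}) and its linear part ($\hat D_i^tB_w=B_w\hat D_i$ for all $w$, which by polarization is \ref{alpha Di p+q+1}). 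This is valid and arguably cleaner. The paper instead obtains \ref{alpha Di p+q+1} as the $h$-normal component of the projected Codazzi equation \ref{Codazzi caso p+q+1 en M} (whose $h$-tangent component is \ref{codazzi en Lp+q+1}), and only afterwards combines it with \ref{A_i es simetrico caso p+q+1 en M} to isolate \ref{hess gamma D simetrico caso p+q+1}; your route shows that \ref{alpha Di p+q+1} is already forced pointwise by the symmetry of the $A_{\eta_i}$'s, while the paper's produces \ref{codazzi en Lp+q+1} and \ref{alpha Di p+q+1} simultaneously from one computation.

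One caveat: your stated mechanism for \ref{codazzi en Lp+q+1} and \ref{Ricci en L p+q+1}, namely that $\pi_*$ intertwines the Levi-Civita connection of $M^n$ on $\Gamma^\perp$ with $\nabla'$, is false: the metric induced by $h$ pulls back to $\langle A\cdot,A\cdot\rangle$ on $\Gamma^\perp$, not to the metric of $M^n$, so $\pi$ is not a Riemannian submersion for the $h$-metric. The correct bridge, which is the computational heart of the paper's proof, is the identity $f_*AX=-h_*\pi_*X$; differentiating it in the ambient space yields
$$f_*\nabla_X(AD_iY)=-h_*\nabla'_{\hat X}(\hat D_i\hat Y)-\alpha^h(\hat X,\hat D_i\hat Y),$$
and after antisymmetrizing in $X,Y$ the terms $\pi_*\nabla_XY$ combine into $[\hat X,\hat Y]$, so the $h$-tangent and $h$-normal components of \ref{Codazzi caso p+q+1 en M} give \ref{codazzi en Lp+q+1} and \ref{alpha Di p+q+1}, while $\langle[AD_i,AD_j]X,Y\rangle=\langle\hat D_j\hat X,\hat D_i\hat Y\rangle'-\langle\hat D_i\hat X,\hat D_j\hat Y\rangle'$ gives \ref{Ricci en L p+q+1}. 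With this replacement your argument closes; as literally written, the derivation of \ref{codazzi en Lp+q+1} would not go through.
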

\begin{proof}
    From Corollary 12 of \cite{DFT}, we know that $D_i$, $\varphi$, $\phi$ and $\Omega$ descend to the quotient by definition of a $D$-system and \ref{phi_ij baja} of Proposition \ref{caracterizacion en M dim p+q+1}. 
    
    Let $\rho$ be the Gauss map of $f$. Then $f_{*}AX=-\rho_{*}X=-h_{*}\pi_{*}X$. Take $X,Y$ projectable vector fields on $M^n$, $\hat{X}\circ\pi=\pi_{*}X$, $\hat{Y}\circ\pi=\pi_{*}Y$. We see that \ref{Ricci en L p+q+1} comes from \ref{A_i es simetrico caso p+q+1 en M} and \ref{Ricci caso p+q+1 en M} of Proposition \ref{caracterizacion en M dim p+q+1} since
    $$\langle AD_jX, AD_iY\rangle-\langle AD_iX,AD_jY\rangle=\langle \hat{D}_j\pi_{*}X,\hat{D}_i\pi_{*}Y\rangle'-\langle\hat{D}_i\pi_{*}X,\hat{D}_j\pi_{*}Y\rangle'.$$
    Notice that
    \begin{align*}
        f_{*}\nabla_X AD_iY&=\widetilde{\nabla}_X f_{*}AD_iY -\langle AX,AD_iY\rangle \rho=-\widetilde{\nabla}_X h_{*}\pi_{*}D_i Y-\langle h_{*}\pi_{*}X,h_{*}\pi_{*} D_iY\rangle h\circ\pi\\
        &=-h_{*}\nabla'_{\hat{X}}\hat{D}_i\hat{Y}-\alpha^h(\hat{X},\hat{D}_i\hat{Y}).
    \end{align*}
    Hence, using this in \ref{Codazzi caso p+q+1 en M} of Proposition \ref{caracterizacion en M dim p+q+1} we obtain \ref{alpha Di p+q+1} and \ref{codazzi en Lp+q+1}. By the Gauss parametrization $\Phi:U\subseteq T^\perp_h L\rightarrow M$ and $\psi(w)=f\Phi(w)=\gamma h+h_{*}\nabla\gamma+w$, $w\in T^\perp_h L$, we get
    $$\psi_{*}X=h_{*}P\hat{\pi}_{*}X+\alpha^h(\hat{\pi}_{*}X,\nabla'\gamma),$$
    where $\hat{\pi}:T^\perp_h L\rightarrow L$ is the bundle projection, $X\in T_{w}(T^\perp_h L)$ is an horizontal vector, and $P$ is the symmetric tensor 
    \begin{equation}\label{P_w}
        P=P_w=\text{Hess}_\gamma+\gamma I-B_w:TL\rightarrow TL,
    \end{equation}
    where $B_w$ is the shape operator of $h$ in the $w-$direction. This implies that
    \begin{equation*}
        -\langle AD_i\Phi_{*}X,\Phi_{*}Y\rangle=\langle h_{*}\hat{D}_i\pi_{*}\Phi_{*}X,h_{*}P\pi_{*}Y\rangle=\langle \hat{D}_i\hat{\pi}_{*}X,P\hat{\pi}_{*}Y\rangle^{'}.
    \end{equation*}
    Therefore $\hat{D}_i^tP=P\hat{D}_i$ and as $D_i^t B_w=B_w D_i$ by \ref{alpha Di p+q+1}, we conclude \ref{hess gamma D simetrico caso p+q+1}.
    
    The converse follows easily by defining $D_i(\Gamma)=0$  and $\pi_{*}D_i X=\hat{D}_i \pi_{*}X$ for $X\in\Gamma^\perp$.
\end{proof}

From now on, we will drop the hat over variables and the prime for the metric and connection of $h:L^{p+1}\rightarrow\mathbb{S}^n$, since we now focus on the leaf space and not on the manifold $M^n$.

The main idea will be to express Proposition \ref{Caracterizacion en Lp+q+1} in terms of the coordinate system given by (\ref{Bajar coordenadas}). As $(\partial_j:=\partial_{u_j})_j$ is a basis on $(TL)_\mathbb{C}$, all the indices will be with respect to this basis between $0$ and $p$. Notice that, since the coordinate vectors are the eigenvalues of the $D_i$'s, they are completely determined by $\varphi$. 

As was shown in the proof of Proposition \ref{caracterizacion en M dim p+q+1} $\varphi$ is used to define the second fundamental form and the metric of the normal bundle of $g$. On the other hand, $\phi$ is used to define the normal connection. Since Codazzi equation relates the second fundamental form with the normal connection, we expect that the $\phi$ is related with $\varphi$. In fact, $\varphi$ determines $\phi$ completely:
\begin{lema}\label{derivadas de phi, Gammas=0, integrabilidad}
    Let $(D,\phi)$ be a pair as in Proposition \ref{Caracterizacion en Lp+q+1}. Then, \ref{codazzi en Lp+q+1} (Codazzi equation) and \ref{compatibilidad de conexion en Lp+q+1} (compatibility of the connection with the metric) are equivalent to $\phi$ being uniquely determined by $\varphi$ by the followings conditions:
    \begin{equation}\label{phi_is(r) para r distinto de i y distinto de s}
        \phi_{is}(\partial_r)=0 \quad\forall r\neq i\neq s\neq r,
    \end{equation}
    \begin{equation}\label{phi_is(i)}
        \phi_{is}(\partial_i)=-\frac{\Gamma_{is}^s}{\varphi_i},\quad\forall s\neq i,
    \end{equation}
    \begin{equation}\label{phi_is(s)}
        \phi_{is}(\partial_s)=\frac{\Gamma_{si}^i}{\varphi_s},\quad\forall s\neq i,
    \end{equation}
    and
    \begin{equation}\label{Gamma es cero si tres son diferentes en p+1}
        \Gamma^k_{ij}=0 \quad\forall i\neq j\neq k\neq i,
    \end{equation}
    \begin{equation}\label{derivadas de phi_i caso p+1}
        \partial_j\varphi_i=2\Gamma^i_{ji}\varphi_i\quad\forall  i\neq j.
    \end{equation}
\end{lema}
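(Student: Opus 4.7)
The plan is to expand Codazzi's equation \ref{codazzi en Lp+q+1} of Proposition \ref{Caracterizacion en Lp+q+1} in the intrinsic coordinate frame $\{\partial_r\}_{r=0}^p$ on $L^{p+1}$ provided by \eqref{Bajar coordenadas}. In this frame each $D_i$ is simultaneously diagonal, with $D_i\partial_i=(1/\varphi_i+1)\partial_i$ and $D_i\partial_j=\partial_j$ for $j\neq i$, so a direct computation using the definition of the Christoffel symbols yields, for $b\neq i$,
\[
(\nabla_{\partial_a}D_i)\partial_b=-\tfrac{1}{\varphi_i}\Gamma^i_{ab}\,\partial_i,\qquad
(\nabla_{\partial_a}D_i)\partial_i=\partial_a\!\bigl(\tfrac{1}{\varphi_i}\bigr)\partial_i+\tfrac{1}{\varphi_i}\sum_{c\neq i}\Gamma^c_{ai}\,\partial_c.
\]
On the right-hand side of Codazzi, condition \ref{kernel de S esta en el kernel de phi en Lp+1} ($\sum_k\varphi_k\phi_{ik}=0$) collapses
\[
\sum_j \varphi_j\phi_{ij}(X)\,D_j\partial_r=\Bigl(\sum_j\varphi_j\phi_{ij}(X)\Bigr)\partial_r+\phi_{ir}(X)\,\partial_r=\phi_{ir}(X)\,\partial_r,
\]
so Codazzi's identity evaluated on $(\partial_a,\partial_b)$ reduces to
\[
(\nabla_{\partial_a}D_i)\partial_b-(\nabla_{\partial_b}D_i)\partial_a=\phi_{ib}(\partial_a)\,\partial_b-\phi_{ia}(\partial_b)\,\partial_a.
\]

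Next I match coefficients in $\{\partial_r\}$ case by case. When $a,b,i$ are pairwise distinct, the symmetry $\Gamma^i_{ab}=\Gamma^i_{ba}$ makes the left-hand side vanish, which forces $\phi_{ib}(\partial_a)=0$ and hence \eqref{phi_is(r) para r distinto de i y distinto de s}. When $a=i$ and $b=s\neq i$, the $\partial_s$-coefficient gives \eqref{phi_is(i)}; the $\partial_c$-coefficient for $c\neq i,s$ forces $\Gamma^c_{si}=0$, which is \eqref{Gamma es cero si tres son diferentes en p+1}; and the $\partial_i$-coefficient, after substituting the value $\phi_{ii}=\tfrac{1}{2}d(1/\varphi_i)$ from \ref{compatibilidad de conexion en Lp+q+1}, collapses to exactly \eqref{derivadas de phi_i caso p+1}. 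Finally, \eqref{phi_is(s)} comes from \eqref{phi_is(i)} applied with $i\leftrightarrow s$ together with antisymmetry $\phi_{is}=-\phi_{si}$ from \ref{compatibilidad de conexion en Lp+q+1}.

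For the converse, every step above is reversible: defining $\phi$ via the listed formulas (including $\phi_{ii}=\tfrac{1}{2}d(1/\varphi_i)$ and $\phi_{ij}=-\phi_{ji}$) and imposing \eqref{Gamma es cero si tres son diferentes en p+1}--\eqref{derivadas de phi_i caso p+1}, one checks that both sides of the displayed Codazzi identity agree coefficient by coefficient in each configuration above, while compatibility with the metric is built into the defining formulas. The main obstacle is purely bookkeeping: tracking the splittings of the sum over $j$ according to the mutual positions of the indices $a,b,i$, and teasing apart the interlocked information carried by the $\partial_i$-coefficient, where the Christoffel symbol $\Gamma^i_{is}$ and the derivative $\partial_s\varphi_i$ only disentangle after invoking the diagonal value of $\phi_{ii}$.
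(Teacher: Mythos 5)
Your proposal is correct and follows essentially the same route as the paper: evaluate the Codazzi identity on the coordinate fields $\partial_a,\partial_b$, collapse the right-hand side via $\sum_k\varphi_k\phi_{ik}=0$, and read off \eqref{phi_is(r) para r distinto de i y distinto de s}--\eqref{derivadas de phi_i caso p+1} by matching coefficients in the frame $\{\partial_r\}$, with \eqref{phi_is(s)} obtained from the antisymmetry in condition \ref{compatibilidad de conexion en Lp+q+1}. The only difference is cosmetic (you case-split $(\nabla_{\partial_a}D_i)\partial_b$ by whether $b=i$, while the paper keeps everything in terms of $d_{ij}$), and your explicit remark that the chain of coefficient identities is reversible makes the stated equivalence slightly more transparent than the paper's one-directional write-up.
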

\begin{proof}
Take in \ref{codazzi en Lp+q+1} of Proposition \ref{Caracterizacion en Lp+q+1} $X=\partial_r$, $Y=\partial_s$ with $s\neq r$. Then
    \begin{equation}\label{valor conexion primera}
        \partial_r(d_{is})+(d_{is}-d_{ir})\Gamma_{rs}^s=\sum_j\phi_{ij}(\partial_r)d_{js}\varphi_j=\phi_{is}(\partial_r)(d_{ss}-1)\varphi_s+\sum_j\phi_{ij}(\partial_r)d_j\varphi_j=\phi_{is}(\partial_r),
    \end{equation}
    $$(d_{is}-d_{ir})\Gamma_{rs}^t=0,\quad\forall t\neq r,s,$$
    and symmetric equations interchanging $r$ with $s$. In particular for $i=s$, we get (\ref{Gamma es cero si tres son diferentes en p+1}) and
    \begin{equation*}
        \partial_r(d_{ii})+2\big(d_{ii}-1\big)\Gamma_{ri}^i=0,\quad\forall r\neq i,
    \end{equation*}
    which is an equivalent form of (\ref{derivadas de phi_i caso p+1}). 
    Using (\ref{derivadas de phi_i caso p+1}) in (\ref{valor conexion primera}) we get (\ref{phi_is(r) para r distinto de i y distinto de s}) and (\ref{phi_is(i)}).
    Equation \ref{compatibilidad de conexion en Lp+q+1} of Proposition \ref{Caracterizacion en Lp+q+1}, for $X=\partial_s$ and $j=s$ implies (\ref{phi_is(s)}).
\end{proof}

By \ref{dsucc 0 en Lp} of Proposition \ref{Caracterizacion en Lp+q+1}, we can use (\ref{derivadas de phi_i caso p+1}) to get
    \begin{equation}\label{derivada_i de varphi_i}
        \partial_i\varphi_i=-2\sum_{j\neq i}\Gamma_{ij}^j\varphi_j.
    \end{equation}
This implies the following.
\begin{cor}\label{corolario f}
    The pair $(D,\phi)$ are determinated by an admissible function $\varphi=(\varphi_i)_{i=0}^p$ satisfying
    $$\partial_i\varphi_j=2\Gamma_{ij}^j\varphi_j\text{ for } i\neq j,\,\text{ and }\,\partial_i\varphi_i=-2\sum_{j\neq i}\Gamma_{ij}^j\varphi_j.$$ 
    In particular, the moduli space of genuine deformations of $f$ has finite dimension at most $p$.
\end{cor}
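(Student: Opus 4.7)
The plan is to assemble the corollary from the preceding lemma and the derived identity (\ref{derivada_i de varphi_i}), together with an initial-value count.

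First I would argue that $D$ and $\phi$ are both determined by $\varphi$. In the coordinate system $(u_0,\ldots,u_p)$ from (\ref{Bajar coordenadas}), the eigenvectors $X_i$ of the $D_j$'s are (up to rescaling) the coordinate vectors $\partial_i$, which are intrinsic when $M^n$ is generic. Since $D_i$ acts on $\{X_j\}$ with eigenvalues $1+1/\varphi_i$ on $X_i$ and $1$ on the remaining $X_j$'s, the tensor $D_i$ is fully prescribed by $\varphi_i$. For $\phi$, equations (\ref{phi_is(r) para r distinto de i y distinto de s})--(\ref{phi_is(s)}) from Lemma \ref{derivadas de phi, Gammas=0, integrabilidad} determine $\phi_{is}(\partial_r)$ for every distinct triple of indices from $\varphi$ and the Christoffel symbols of $h$, while item \ref{compatibilidad de conexion en Lp+q+1} of Proposition \ref{Caracterizacion en Lp+q+1} fixes $\phi_{ii}=\tfrac{1}{2}d(1/\varphi_i)$. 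Thus $\phi$ is entirely determined by $\varphi$.

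Next I would collect the PDEs satisfied by $\varphi$. Equation (\ref{derivadas de phi_i caso p+1}) of Lemma \ref{derivadas de phi, Gammas=0, integrabilidad} gives $\partial_i\varphi_j=2\Gamma_{ij}^j\varphi_j$ for $i\neq j$, which is exactly the off-diagonal PDE claimed. The diagonal equation $\partial_i\varphi_i=-2\sum_{j\neq i}\Gamma_{ij}^j\varphi_j$ is precisely (\ref{derivada_i de varphi_i}); alternatively, it can be derived by differentiating the admissibility constraint $\sum_k\varphi_k=-1$ in direction $\partial_i$ and applying the off-diagonal equations to every $\partial_i\varphi_j$ with $j\neq i$. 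So the diagonal equation is a consequence of the off-diagonal ones together with admissibility, and both are recorded for symmetry.

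Finally, I would bound the dimension of the moduli space. The PDEs constitute a first-order linear system that prescribes every partial derivative $\partial_i\varphi_j$ of every unknown $\varphi_j$ in terms of the unknowns themselves. Hence any solution is uniquely determined by its values $\{\varphi_i(x_0)\}_{i=0}^p$ at a single point $x_0\in L^{p+1}$. These $(p+1)$ initial values are constrained by the admissibility equation $\sum_i\varphi_i(x_0)+1=0$ and by the reality condition $\overline{\varphi_i(x_0)}=\varphi_{\overline{i}}(x_0)$; the former cuts one real dimension while the latter only identifies conjugate coordinates. The nonvanishing $\varphi_i\neq0$ is open, so the space of admissible initial conditions is at most $p$-dimensional as a real manifold. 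Composing with the bijection of Proposition \ref{caracterizacion en M dim p+q+1} yields the bound of $p$ on the moduli space of genuine deformations.

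I do not expect a true obstacle here: the statement is an assembly of results already in hand. The only delicate point is bookkeeping the reality/conjugation condition so that the complex count of $(p+1)$ initial values with one complex constraint still produces the correct real dimension $p$; this is handled by pairing indices $i,\overline{i}$ via the conjugation fixed after (\ref{Bajar coordenadas}).
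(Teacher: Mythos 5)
Your proposal is correct and follows essentially the same route the paper intends: the paper derives the diagonal equation (\ref{derivada_i de varphi_i}) from (\ref{derivadas de phi_i caso p+1}) and admissibility exactly as you do, and then states the corollary without further proof, the dimension bound being precisely your initial-value count with the single real constraint $\sum_i\varphi_i=-1$ cutting one dimension from the $(p+1)$ real parameters. Your bookkeeping of the conjugation condition is also consistent with the paper's conventions.
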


\begin{remark}
    Since $\varphi$ is admissible, the matrix of 1-forms $\phi$ defined by (\ref{phi_is(r) para r distinto de i y distinto de s}), (\ref{phi_is(i)}) and (\ref{phi_is(s)}) immediately satisfy \ref{varphi y phi admisibles en L}, \ref{kernel de S esta en el kernel de phi en Lp+1} and \ref{compatibilidad de conexion en Lp+q+1} of Proposition \ref{Caracterizacion en Lp+q+1}. Indeed, by (\ref{derivadas de phi_i caso p+1})
    \begin{align*}
        \sum_s\phi_{is}(\partial_r)\varphi_s&=\phi_{ii}(\partial_r)\varphi_i+\phi_{ir}(\partial_r)\varphi_r=\frac{1}{2}\partial_r(\varphi^{-1}_i)\varphi_i+\Gamma_{ri}^i=0,
    \end{align*}
    and by (\ref{suma=-1}) we get
    \begin{align*}
        \sum_s\phi_{is}(\partial_i)\varphi_s&=\frac{1}{2}\partial_i(\varphi^{-1}_i)\varphi_i-\sum_{s\neq i}\frac{\Gamma_{is}^s\varphi_s}{\varphi_i}=\frac{1}{2\varphi_i}\partial_i\Big(\sum_s\varphi_s\Big)=0.
    \end{align*}
    From now on, whenever we work with with $\phi$ we will assume that it is defined by $\varphi$ by (\ref{phi_is(r) para r distinto de i y distinto de s}), (\ref{phi_is(i)}) and (\ref{phi_is(s)}).
\end{remark}
\begin{lema}\label{Condiciones en (h,gamma)}
    Condition \ref{alpha Di p+q+1} of Proposition \ref{Caracterizacion en Lp+q+1} is equivalent to
    \begin{equation}\label{alpha diagonaliza en coordenadas en p+q+1}
    \alpha^h(\partial_j,\partial_k)=0\quad\forall j\neq k.
\end{equation}
    In particular, the chart is a conjugate chart. Moreover, condition \ref{hess gamma D simetrico caso p+q+1} of Proposition \ref{Caracterizacion en Lp+q+1} is equivalent to the support function  $\gamma$ satisfying $Q(\gamma)=0$.
\end{lema}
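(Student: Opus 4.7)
The plan is to test both tensorial conditions on the coordinate frame $\{\partial_i\}$ of $(TL)_\mathbb{C}$, exploiting the fact that by construction of the $D$-system each $D_i$ is diagonal in this basis with $D_i\partial_j = d_{ij}\partial_j$, where $d_{ij} = 1 + \delta_{ij}/\varphi_i$.

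For the first equivalence, I would compute
\[
\alpha^h(D_i\partial_j,\partial_k) - \alpha^h(\partial_j,D_i\partial_k) = (d_{ij}-d_{ik})\,\alpha^h(\partial_j,\partial_k).
\]
For any $j\neq k$, the choice $i=j$ gives $d_{jj}-d_{jk}=1/\varphi_j\neq 0$, so condition \ref{alpha Di p+q+1} of Proposition \ref{Caracterizacion en Lp+q+1} forces $\alpha^h(\partial_j,\partial_k)=0$; the converse is immediate since $\{\partial_i\}$ is a basis of $(TL)_\mathbb{C}$. Combined with the vanishing $\Gamma^\ell_{jk}=0$ for three distinct indices already provided by Lemma \ref{derivadas de phi, Gammas=0, integrabilidad}, this shows that $(u_0,\ldots,u_p)$ is a conjugate chart in the sense of Definition \ref{definicion de carta conjugada}.

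For the second equivalence, the same diagonalization argument applied to the symmetric tensor $\text{Hess}_\gamma+\gamma I$ reduces condition \ref{hess gamma D simetrico caso p+q+1} of Proposition \ref{Caracterizacion en Lp+q+1} to the vanishing of $\langle(\text{Hess}_\gamma+\gamma I)\partial_j,\partial_k\rangle$ for every $j\neq k$. I would then expand
\[
\langle\text{Hess}_\gamma\,\partial_j,\partial_k\rangle = \partial_j\partial_k\gamma - \sum_\ell \Gamma^\ell_{jk}\partial_\ell\gamma = \partial_j\partial_k\gamma - \Gamma^j_{kj}\partial_j\gamma - \Gamma^k_{jk}\partial_k\gamma,
\]
using $\Gamma^\ell_{jk}=0$ for three distinct indices and the torsion-free symmetry $\Gamma^\ell_{jk}=\Gamma^\ell_{kj}$ of the Levi-Civita connection. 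Adding $\gamma g_{jk}$ recovers exactly the operator $Q_{jk}(\gamma)$ from Definition \ref{definicion de carta conjugada}, so this condition is equivalent to $Q(\gamma)=0$.

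The only step requiring a bit of care is the identification of the off-diagonal Hessian components with the DMZ operator $Q_{jk}$; once the vanishing of the mixed Christoffel symbols from Lemma \ref{derivadas de phi, Gammas=0, integrabilidad} is in place, both directions follow from a transparent algebraic computation, so I do not anticipate any substantial obstacle.
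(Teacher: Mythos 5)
Your proposal is correct and follows essentially the same route as the paper: evaluate both tensorial conditions on the coordinate frame that diagonalizes the $D_i$'s, take $i=j$ to extract the nonvanishing factor $d_{jj}-d_{jk}=1/\varphi_j$, and invoke the vanishing of the mixed Christoffel symbols from Lemma \ref{derivadas de phi, Gammas=0, integrabilidad} together with Remark \ref{condicion integrabilidad carta conjugada} to identify the off-diagonal Hessian components with the operator $Q_{jk}$. The paper's proof is merely a terser version of the same computation.
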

\begin{proof}
    Take $X=\partial_j$ and $Y=\partial_k$ in \ref{alpha Di p+q+1} for $j\neq k$. Then $(d_{ij}-d_{ik})\alpha^h(\partial_j,\partial_k)=0$ for all $i$. We obtain (\ref{alpha diagonaliza en coordenadas en p+q+1}) from this for $i=j$. 
    Using (\ref{Gamma es cero si tres son diferentes en p+1}) and Remark \ref{condicion integrabilidad carta conjugada} we conclude that the chart is conjugate.
    
    The last assertion follows by evaluating the bilinear map given by \ref{hess gamma D simetrico caso p+q+1} of Proposition \ref{Caracterizacion en Lp+q+1} on the coordinates fields $X=\partial_j$ and $Y=\partial_k$ for $j\neq k$. 
\end{proof}
The only remaining condition  to analyze is \ref{Ricci en L p+q+1}, Ricci equation. We see now that, by Remark \ref{condicion integrabilidad carta conjugada} and Lemmas \ref{derivadas de phi, Gammas=0, integrabilidad} and \ref{Condiciones en (h,gamma)}, this is trivially satisfied.
\begin{lema}
    Assume that $(D,\phi)$ satisfy conditions \ref{varphi y phi admisibles en L} to \ref{codazzi en Lp+q+1} of Proposition \ref{Caracterizacion en Lp+q+1}. Then \ref{Ricci en L p+q+1} of Proposition \ref{Caracterizacion en Lp+q+1} is satisfied if and only (\ref{integrabilidad definicion de conjugada}) holds.
\end{lema}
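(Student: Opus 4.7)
The plan is to evaluate condition \ref{Ricci en L p+q+1} on pairs of coordinate vector fields and reduce it by case analysis to equation (\ref{integrabilidad definicion de conjugada}). Since both sides of \ref{Ricci en L p+q+1} are antisymmetric in $(i,j)$ (vanishing for $i=j$) and in $(X,Y)$, it suffices to check the equation for $i\neq j$ on pairs $X=\partial_r$, $Y=\partial_s$ with $r\neq s$. The inputs to the computation are:
\begin{itemize}
\item the eigenvalue relation $D_k\partial_l = d_{kl}\partial_l$ with $d_{kl}=1+\delta_{kl}/\varphi_k$, which gives
$$\mathrm{LHS}_{ij}(\partial_r,\partial_s) = g_{rs}\big(d_{jr}d_{is}-d_{ir}d_{js}\big);$$
\item the formulas (\ref{phi_is(r) para r distinto de i y distinto de s})--(\ref{phi_is(s)}) for $\phi_{ij}$, whose crucial consequence is that $\phi_{ab}(\partial_c)$ vanishes unless $c\in\{a,b\}$; and
\item the derivative relations (\ref{derivadas de phi_i caso p+1}) and (\ref{derivada_i de varphi_i}) together with $\Gamma^k_{ij}=0$ for distinct indices (\ref{Gamma es cero si tres son diferentes en p+1}), which are already in force from Lemma \ref{derivadas de phi, Gammas=0, integrabilidad}.
\end{itemize}

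The case analysis is organized by the intersection pattern of $\{i,j\}$ with $\{r,s\}$. First, if these sets are disjoint then every $d_{kl}$ appearing in the LHS equals $1$, so $\mathrm{LHS}=0$; and in $\Omega_{ij}(\partial_r,\partial_s)$ each product $\varphi_k\phi_{ik}(\partial_r)\phi_{jk}(\partial_s)$ would force $r\in\{i,k\}$ and $s\in\{j,k\}$, hence $r=s=k$, a contradiction. Similarly $d\phi_{ij}(\partial_r,\partial_s)=0$, so both sides vanish trivially. Second, the substantive case is when exactly one index is shared, say $r=i$ and $s\notin\{i,j\}$. Then $\mathrm{LHS}=-g_{is}/\varphi_i$; the only contributions to $\Omega_{ij}(\partial_i,\partial_s)$ come from $k\in\{i,j,s\}$, and after substituting the explicit values and using $\partial_s\varphi_i=2\Gamma_{si}^i\varphi_i$ to compute $d\phi_{ij}(\partial_i,\partial_s)$, multiplying through by $\varphi_i$ yields exactly
$$\partial_s\Gamma_{ij}^j+\Gamma_{ij}^j\Gamma_{sj}^j-\Gamma_{ij}^j\Gamma_{si}^i-\Gamma_{sj}^j\Gamma_{is}^s+g_{is}=0,$$
which is (\ref{integrabilidad definicion de conjugada}) after the relabeling $(i,j,k)\mapsto(s,j,i)$. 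The remaining cases $j=r$, $i=s$, $j=s$ are symmetric and give the same relation up to index permutation.

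Finally, the case $\{i,j\}=\{r,s\}$, i.e.\ evaluating at $(\partial_i,\partial_j)$ for the same $i,j$, should be a consequence of the previous case. The LHS equals $-g_{ij}(\tfrac{1}{\varphi_i}+\tfrac{1}{\varphi_j}+\tfrac{1}{\varphi_i\varphi_j})$; expanding $d\phi_{ij}(\partial_i,\partial_j)$ using both derivative formulas (\ref{derivadas de phi_i caso p+1}) and (\ref{derivada_i de varphi_i}), and collecting the nontrivial contributions of $\Omega_{ij}(\partial_i,\partial_j)$ over $k\in\{0,\dots,p\}$ using $\sum_k\varphi_k=-1$ from (\ref{suma=-1}), the resulting identity reduces to a $\varphi$-weighted linear combination of instances of (\ref{integrabilidad definicion de conjugada}) over the auxiliary indices $k\notin\{i,j\}$, hence is implied by the previous case. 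The converse direction follows by reading these computations backwards: if (\ref{integrabilidad definicion de conjugada}) is assumed for every triple of distinct indices, the Ricci identity holds for every choice of $(i,j,r,s)$.

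The main obstacle is purely bookkeeping: each $\Omega_{ij}(\partial_r,\partial_s)$ is a sum over $k$ of $\varphi_k(\phi_{ik}\wedge\phi_{jk})(\partial_r,\partial_s)$, and one must be systematic about which $k\in\{i,j,r,s\}$ contribute nonzero terms. The observation that $\phi_{ab}(\partial_c)=0$ unless $c\in\{a,b\}$ cuts this down to at most four summands per case and makes the algebra manageable.
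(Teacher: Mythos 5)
Your plan is correct and follows essentially the same route as the paper: reduce \ref{Ricci en L p+q+1} to evaluations on coordinate pairs, observe that the case $\{i,j\}\cap\{r,s\}=\emptyset$ is trivial, check that the one-shared-index case is exactly an instance of (\ref{integrabilidad definicion de conjugada}) (your relabeling and your value $-g_{is}/\varphi_i$ for the left-hand side both check out against the paper's computation at $(\partial_j,\partial_r)$), and deduce the case $(X,Y)=(\partial_j,\partial_i)$ as a $\varphi$-weighted combination of the off-diagonal instances. The only difference is in that last step: where you propose a direct expansion and assert it collapses, the paper gets it for free from the closed identity $\sum_k\varphi_k(d\phi_{ik}+\Omega_{ik})=d\big(\sum_k\varphi_k\phi_{ik}\big)=0$ together with $\sum_k\varphi_kD_k=0$ (a consequence of (\ref{suma=-1})), which is the cleaner way to make your ``weighted linear combination'' claim precise.
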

\begin{proof}
    By (\ref{phi_is(r) para r distinto de i y distinto de s}), the only non zero equations of \ref{Ricci en L p+q+1} are when $X=\partial_j, Y=\partial_r$ for $r\neq j$. First, for $r\neq i,j$ we get
    \begin{equation}\label{dphi_ij(j,r)}
        d\phi_{ij}(\partial_j,\partial_r)=\partial_j(\phi_{ij}(\partial_r))-\partial_r(\phi_{ij}(\partial_j))=-\partial_r\Big(\frac{\Gamma_{ji}^i}{\varphi_j}\Big)=\frac{-\partial_r\Gamma_{ji}^i+2\Gamma_{rj}^j\Gamma_{ji}^i}{\varphi_j},
    \end{equation}
    and 
    \begin{align*}
        \Omega_{ij}(\partial_j,\partial_r)&=(\phi_{ij}(\partial_j)\phi_{jj}(\partial_r))\varphi_j+(-\phi_{ir}(\partial_r)\phi_{jr}(\partial_j))\varphi_r+(-\phi_{ii}(\partial_r)\phi_{ji}(\partial_j))\varphi_i\\
        &=\frac{\Gamma_{ji}^i\partial_r(\varphi^{-1}_j)}{2}+\frac{\Gamma_{ri}^i\Gamma_{jr}^r}{\varphi_j}+\frac{\partial_r(\varphi^{-1}_i)\Gamma_{ji}^i\varphi_i}{2\varphi_j}.
    \end{align*}
    Therefore, by (\ref{derivadas de phi_i caso p+1}) we have
    \begin{equation}\label{Omega(j,r)}
        \Omega_{ij}(\partial_j,\partial_r)=\frac{-\Gamma_{ji}^i\Gamma_{rj}^j+\Gamma_{ri}^i\Gamma_{jr}^r-\Gamma_{ri}^i\Gamma_{jr}^r}{\varphi_j}.
    \end{equation}
    Adding (\ref{dphi_ij(j,r)}) and (\ref{Omega(j,r)}) we get (\ref{integrabilidad definicion de conjugada}).
    
    For $X=\partial_j$ and $Y=\partial_i$, first notice that
    \begin{align*}
        \sum_{k}\varphi_k(d\phi_{ik}+\Omega_{ij})&=\sum_k\varphi_kd\phi_{ik}+\sum_l\varphi_l\phi_{il}\wedge\Big(\sum_k\varphi_k\phi_{kl}\Big)=\sum_k\varphi_kd\phi_{ik}+\sum_l\varphi_l\phi_{il}\wedge(2\varphi_l\phi_{ll})\\
        &=\sum_k\varphi_kd\phi_{ik}-\sum_{l}\phi_{il}\wedge d\varphi_l=d\Big(\sum_k\varphi_k\phi_{ik}\Big)=0. 
    \end{align*}
    Then using that $\sum_k\varphi_kD_k=0$ we conclude that
    \begin{align*}
        0&=\sum_{k}\varphi_k(d\phi_{ik}+\Omega_{ik})(\partial_j,\partial_i)=\varphi_j[d\phi_{ij}(\partial_j,\partial_i)+\Omega_{ij}(\partial_j,\partial_i)]+\sum_{k\neq j}\varphi_k[\langle D_k\partial_j,D_i\partial_i\rangle-\langle D_i\partial_j,D_k\partial_i\rangle]\\
        &=\varphi_j[d\phi_{ij}(\partial_j,\partial_i)+\Omega_{ij}(\partial_j,\partial_i)]-\varphi_j[\langle D_j\partial_j,D_i\partial_i\rangle-\langle D_i\partial_j,D_j\partial_i\rangle].
    \end{align*}
    This shows that all Ricci equations are satisfied. 
\end{proof}
\begin{remark}\label{Qij(xik)=0}
    Equation (\ref{integrabilidad definicion de conjugada}) can also be expressed as
    $$Q_{ij}(\xi_k)=0\quad\forall i\neq j\neq k\neq i,$$
    where $\xi_k$ is a (possibly complex) local smooth square root of $\varphi_k$.
\end{remark}
The last results motivate the following definition.
\begin{defn}\label{definicion S y S*}
    \normalfont Given $h:L^{p+1}\rightarrow\mathbb{S}^n$ with a conjugate chart, let
    \begin{equation*}
        \mathcal{S}_\mu^{*}=\{\varphi\text{ is admissible of index }\mu \text{ and } \partial_i\varphi_j=2\Gamma_{ij}^j\varphi_j,\forall i\neq j\},
    \end{equation*}
    \begin{equation*}
        \mathcal{S}^{*}=\bigcup_{\mu=0}^p\mathcal{S}_\mu^{*}=\{\varphi\text{ is admissible and } \partial_i\varphi_j=2\Gamma_{ij}^j\varphi_j,\forall i\neq j\}.
    \end{equation*}    
\end{defn}
\begin{remark}\label{relacion caso p=2 con trabajo dft}
The moduli space $\mathcal{C}_h$ described in Theorem 1 of \cite{DFT} is naturally related to our moduli space $\mathcal{S}^*$. 
Suppose that $H:L^3\rightarrow\mathbb{S}^n$ has a conjugate chart $(u_0,u_1,u_2)$ centered at the origin with $u_2$ real and $\mathcal{S}_{0}^*\neq\emptyset$. Let $L^2=\{u=0\}\subseteq L^3$ and  $h=H|_{L^2}$. Then there is an injection $\mathcal{S}_{0}^*\rightarrow\mathcal{C}_h$ given by
    $$\varphi=(\varphi_0,\varphi_1,\varphi_2)\rightarrow \Big(\frac{1}{2}\varphi_0|_{u_1=u_2=0},\frac{1}{2}\varphi_1|_{u_0=u_2=0}\Big),\quad\text{if } (u_0,u_1) \text{ are real},$$
    $$\varphi=(\varphi_0,\varphi_1,\varphi_2)\rightarrow \frac{1}{2}\varphi_0|_{u_1=u_2=0},\quad\text{if } (u_0,u_1) \text{ are complex}.$$
    Indeed, using the notation in \cite{DFT}, the condition $Q(\rho^{UV})=0$ in the real case is just Remark \ref{Qij(xik)=0} for $p=k=2$. The complex case is analogous.
\end{remark}

All the previous results can be summarized in the following.
\begin{thm}\label{Teorema principal de Sbrana Cartan}
    Let $f:M^n\rightarrow \mathbb{R}^{n+1}$ be a generic simply connected hypersurface of rank $2\leq p+1<n$. Suppose that $f$ posses a non-degenerate deformation in codimension $p$. Then the Gauss map $h:L^{p+1}\rightarrow\mathbb{S}^n$ possesses a conjugate chart with $\mathcal{S}^{*}\neq\emptyset$ and the support function satisfies $Q(\gamma)=0$. Moreover, the set $\mathcal{S}_\mu^{*}\subseteq\mathcal{S}^{*}$ naturally parametrizes the moduli space of non-degenerate genuine deformations of $f$ in $\mathbb{R}^{n+p}_{\mu}$. 
    
    Conversely, any pair $(h,\gamma)$ satisfying these properties is the Gauss data of a hypersurface $f:M^n\rightarrow\mathbb{R}^{n+1}$ which possesses non-degenerate genuine deformations in codimension $p$. 
\end{thm}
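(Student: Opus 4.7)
The plan is essentially to assemble the chain of equivalences that the section has already established, since by this stage the geometric and PDE work has been distilled into the propositions and lemmas above. Concretely, the theorem collects the bijections: \emph{non-degenerate deformation} $g\colon M^n\to\mathbb R^{n+p}_\mu$ $\Longleftrightarrow$ \emph{pair $(D,\phi)$ on $M^n$ satisfying Proposition \ref{caracterizacion en M dim p+q+1}} $\Longleftrightarrow$ \emph{pair $(\hat D,\hat\phi)$ on $L^{p+1}$ satisfying Proposition \ref{Caracterizacion en Lp+q+1}} $\Longleftrightarrow$ \emph{$\varphi\in\mathcal S_\mu^*$ together with a conjugate chart for $h$ and $Q(\gamma)=0$}. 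I would run the forward and converse directions in parallel so that the natural character of the parametrization is transparent.

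For the forward direction, given the deformation $g$, Proposition \ref{caracterizacion en M dim p+q+1} yields $(D,\phi)$ on $M^n$; Proposition \ref{Caracterizacion en Lp+q+1} descends it to $(\hat D,\hat\phi)$ on $L^{p+1}$. Lemma \ref{Condiciones en (h,gamma)} then identifies the projected conditions iv) and iii) with \emph{conjugacy of the induced chart} and $Q(\gamma)=0$ respectively, and Corollary \ref{corolario f} together with Lemma \ref{derivadas de phi, Gammas=0, integrabilidad} reduce the data $(\hat D,\hat\phi)$ to a single admissible $\varphi$ solving $\partial_i\varphi_j=2\Gamma_{ij}^j\varphi_j$ for $i\neq j$, i.e.\ $\varphi\in\mathcal S^*$. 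The index $\mu$ of the ambient metric is recovered from the signature of the singularity of $\varphi$ by Proposition \ref{singular signature es el indice de la metrica}, placing $\varphi\in\mathcal S_\mu^*$.

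For the converse, starting from $(h,\gamma)$ with a conjugate chart, $Q(\gamma)=0$, and a choice of $\varphi\in\mathcal S_\mu^*$, first reconstruct $f\colon M^n\to\mathbb R^{n+1}$ via the Gauss parametrization. Then define $\hat D_i$ on $L^{p+1}$ as the tensor diagonal in $\{\partial_j\}$ with eigenvalue $1+\varphi_i^{-1}$ on $\partial_i$ and $1$ elsewhere, and $\hat\phi$ via (\ref{phi_is(r) para r distinto de i y distinto de s})--(\ref{phi_is(s)}). Conditions i), ii), v), vi) of Proposition \ref{Caracterizacion en Lp+q+1} follow from admissibility and the defining equations of $\mathcal S^*$; iii) and iv) are $Q(\gamma)=0$ and conjugacy via Lemma \ref{Condiciones en (h,gamma)}; vii) is Lemma \ref{derivadas de phi, Gammas=0, integrabilidad}; and viii) reduces to the integrability (\ref{integrabilidad definicion de conjugada}) of the conjugate chart by the final lemma of the subsection. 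Lifting $(\hat D,\hat\phi)$ to $M^n$ via the converse of Proposition \ref{Caracterizacion en Lp+q+1} and then applying the converse of Proposition \ref{caracterizacion en M dim p+q+1} produces the required $g$. For the naturality of the parametrization, the diagonalizing frame $\{X_i\}$ is intrinsic by Corollary \ref{corolario generica implica S(beta)=todo y nulidad}, and $\varphi_i$ as defined in (\ref{definion de tau_i}) is invariant under the rescaling $X_i\mapsto\mu_i X_i$, so \emph{deformation $\mapsto\varphi$} is well defined on congruence classes; injectivity follows from the congruence-uniqueness clause of Proposition \ref{caracterizacion en M dim p+q+1}, and surjectivity from the converse just sketched.

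The main obstacle, such as it is, is purely bookkeeping: one must carefully verify that each of the eight conditions of Proposition \ref{Caracterizacion en Lp+q+1} is recovered from the triple $(h,\gamma,\varphi)$ alone, with nothing left dangling. The only clause that is not a definitional match is Ricci viii), but as the final lemma of the subsection shows, it is automatic once the chart's integrability (\ref{integrabilidad definicion de conjugada}) is in hand, which in turn is built into the conjugacy hypothesis. No new analysis beyond the preceding subsections is required.
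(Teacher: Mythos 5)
Your proposal is correct and follows essentially the same route as the paper, which presents Theorem \ref{Teorema principal de Sbrana Cartan} precisely as a summary of Propositions \ref{caracterizacion en M dim p+q+1} and \ref{Caracterizacion en Lp+q+1}, Lemmas \ref{derivadas de phi, Gammas=0, integrabilidad} and \ref{Condiciones en (h,gamma)}, Corollary \ref{corolario f}, and the Ricci lemma, with no further argument. Your assembly of the chain of bijections, including the naturality and injectivity remarks via Corollary \ref{corolario generica implica S(beta)=todo y nulidad} and the congruence-uniqueness clause, is exactly what the paper intends.
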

\begin{remark}
    In the converse, the parametrized hypersurface may not be generic and then the set $\mathcal{S}^{*}$ parametrize the non-degenerate deformations such that $\beta$ is diagonalizable by the vectors $X_i\in\Gamma_{\mathbb{C}}^\perp$ given by (\ref{Bajar coordenadas}). To verify if $f$ is generic, we express the splitting tensor in terms of the Gauss data. Using \cite{DG}, we see that for $(y,w)\in M^n=T^\perp_h L$, the splitting tensor is given by $C_\xi=B_\xi P_w^{-1}$ where $\xi\in T^\perp_hL(y)=\Delta(y,w)$ and $P_w$ was defined in (\ref{P_w}). Thus, the hypersurface is generic precisely in the open subset
    $$U=U_{h,\gamma}=\{w\in T^{\perp}_hL: P_w\text{ is invertible and }\exists\xi \text{ such that } B_\xi P_w^{-1} \text{ is semisimple over }\mathbb{C} \}.$$
\end{remark}

\subsection{The moduli space \texorpdfstring{$\mathcal{S}^{*}$}{TEXT}}\label{seccion especie}

In this subsection we introduce the notion of species of a conjugate chart. This concept will characterize $\mathcal{S}^{*}$ and also give a geometric description of it.

\text{ }

Suppose that $h:L^{p+1}\rightarrow\mathbb{S}^n$ has a conjugate chart $(u_0,\ldots,u_p)$. By Corollary \ref{corolario f} any section $\varphi$ over the trivial $\mathbb{C}$-bundle $\mathbb{C}^{p+1}\rightarrow L^{p+1}$ that is also in $\mathcal{S}^{*}$ must satisfy that
\begin{equation*}
    d\varphi+\omega\varphi=0,
\end{equation*}
where $\omega:TL\rightarrow\text{End}(\mathbb{C}^{p+1})$ is the bundle map $\omega_i(e_j):=\omega(\partial_i)(e_j)=\sum_k\omega_{ij}^ke_k$, where
\begin{equation}\label{omega de la coneccion}
    \omega_{ij}^k=\left\{ \begin{array}{lr}
         -2\Gamma_{ij}^j&  \text{if } k=j\neq i,\\
         2\Gamma_{ij}^j&  \text{if } k=i\neq j,\\
         0 &\,\text{ in other case}.
    \end{array}\right.
\end{equation}
In other words, this element $\varphi\in\mathcal{S}^{*}$ is a parallel section of the connection $\tilde{\nabla}:\mathfrak{X}(L^{p+1})\times\Gamma(\mathbb{C}^{p+1})\rightarrow\Gamma(\mathbb{C}^{p+1})$ over the trivial bundle given by
\begin{equation}\label{definicion de la conexion en el trivial}
    \tilde{\nabla}\xi=d\xi+\omega\xi.
\end{equation}
Notice that the conjugation $C(e_i)=e_{\overline{i}}$ is parallel with respect to $\tilde{\nabla}$ since $C$ commutes with $\omega_i$ for all $i$. This motivates the following definition.

\begin{defn}\label{sbrana bundle}
    \normalfont Consider a DMZ system 
    $$Q_{ij}=\partial_{ij}^2-\Gamma_{ji}^i\partial_i-\Gamma_{ij}^j\partial_j+g_{ij},\quad\forall 0\leq i<j\leq p,$$
    defined on $L^{p+1}\subseteq\mathbb{R}^{p+1}$. We call the real affine bundle $F=(\text{Re}_C(\mathbb{C}^{p+1})\rightarrow L^{p+1}, \tilde{\nabla}=d+\omega)$ the {\it Sbrana bundle associated to $Q$}, where $\omega$ is defined by (\ref{omega de la coneccion}).
\end{defn}
\begin{remark}
    Whenever $h:L^{p+1}\rightarrow\mathbb{S}^n$ has a conjugate coordinate system, that is $Q(h)=0$, then the Sbrana bundle is assumed to be associated to this DMZ system $Q$. 
\end{remark}

Any parallel section $\varphi$ of the Sbrana bundle satisfies that $\partial_i\big(\sum_j\varphi_j\big)=0$ for any $i$, so the sum of the coordinates is constant. Thus, if $\varphi(q)$ is admissible and has index $\mu$ for some $q\in L^{p+1}$, then $\varphi\in\mathcal{S}_\mu^{*}$ in the neighbourhood of $q$ where $\varphi_i\neq 0$ for all $i$.

For completeness, we describe next a procedure to find all parallel sections of an affine bundle $E$ namely, its trivial holonomy, as an integration of the Ambrose-Singer Theorem.
Since this result is local, we fix a trivialization and assume that $E=\mathbb{C}^{N}\rightarrow L^{p+1}$.
Denote by
\begin{equation*}
    \omega=\tilde{\nabla}-d \in\Gamma(T^{*}L\otimes \text{End}(\mathbb{C}^N)),
\end{equation*}
the connection 1-form and
\begin{equation*}
    \Omega_0:=\Omega=d\omega+[\omega,\omega]\in\Gamma(T^{*}L\otimes T^{*}L\otimes\text{End}(\mathbb{C}^N)),
\end{equation*}
the curvature 2-form. Fix any connection $\nabla$ for $L^{p+1}$, and define inductively
\begin{equation*}
    \Omega_k=\nabla\Omega_{k-1}-\Omega_{k-1}\circ\omega\in\Gamma\Big((\bigotimes_{k=0}^{k+1}T^{*}L)\otimes\text{End}(\mathbb{C}^N)\Big).
\end{equation*} 
Consider the sets
\begin{equation*}
    \Delta_{k}:=\{\varphi\in E:\Omega_k(X_0,\ldots,X_k)\varphi=0,\forall X_i\in TL\},
\end{equation*}
\begin{equation*}
    \mathcal{N}_k=\bigcap_{j=0}^k\Delta_{j}.
\end{equation*}
As usual we assume that $\mathcal{N}_k$ is a smooth vector bundle of $E$ for $k=0,\ldots,(N-1)$ since this is true along each connected component of an open dense subset of $L^{p+1}$.
\begin{prop}
    Assume that $\mathcal{N}_k$ is a smooth subbundle of $E=(\mathbb{C}^N\rightarrow L^{p+1},\tilde{\nabla})$ for $k=0,\ldots,N-1$. Then $\mathcal{N}_{N-1}$ is the maximal parallel flat subbundle of $E$. In particular, given any initial condition $\varphi_q\in \mathcal{N}_{N-1}(q)$ for some $q\in L^{p+1}$, there exists a unique parallel section $\varphi$ of $E$ such that $\varphi(q)=\varphi_q$ and $\varphi\in\Gamma(\mathcal{N}_{N-1})$. 
\end{prop}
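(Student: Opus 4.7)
The plan is to establish the proposition in three moves: distill a single recursive identity linking $\tilde\nabla$ and the tensors $\Omega_k$, use it to force $\mathcal{N}_{N-1}$ to be both parallel and flat, and finally verify that every parallel section must live inside it. For any smooth section $\varphi$ of $E$, any point $q\in L^{p+1}$ and any $X\in T_qL^{p+1}$, I would pick local fields $Y_0,Y_1,\ldots$ with $(\nabla_X Y_i)_q=0$ and expand $X\bigl(\Omega_{k-1}(Y_0,\ldots)\varphi\bigr)$ by Leibniz; substituting $X\varphi=\tilde\nabla_X\varphi-\omega(X)\varphi$ and invoking the defining recursion $\Omega_k=\nabla\Omega_{k-1}-\Omega_{k-1}\circ\omega$ yields, at $q$,
\begin{equation*}
X\bigl(\Omega_{k-1}(Y_0,\ldots)\varphi\bigr)\;=\;\Omega_k(X,Y_0,\ldots)\varphi\;+\;\Omega_{k-1}(Y_0,\ldots)\,\tilde\nabla_X\varphi.
\end{equation*}
This is the single bookkeeping identity driving everything that follows.

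From it I would read off two consequences. First, if $\varphi\in\Gamma(\mathcal{N}_k)$ the left-hand side vanishes and $\Omega_k\varphi=0$, so $\Omega_{k-1}\tilde\nabla_X\varphi=0$; running the same argument at every lower level gives $\tilde\nabla_X\varphi\in\Gamma(\mathcal{N}_{k-1})$. Consequently, the moment $\mathcal{N}_{k+1}=\mathcal{N}_k$ holds on an open set, the identity promotes $\mathcal{N}_k$ to a parallel subbundle there, and reapplying it propagates stabilization to all later indices. A rank count closes this step: either $\mathcal{N}_0=E$, forcing $\Omega_j\equiv 0$ and $\mathcal{N}_k=E$ for every $k$, or $\mathrm{rank}(\mathcal{N}_0)\leq N-1$, which leaves room for at most $N-1$ strict rank drops, so the chain stabilizes by $k=N-1$. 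Flatness of $\mathcal{N}_{N-1}$ then comes for free: since $\mathcal{N}_{N-1}\subseteq\mathcal{N}_0=\Delta_0$, the curvature $\Omega_0$ annihilates every section of $\mathcal{N}_{N-1}$, so $\tilde\nabla|_{\mathcal{N}_{N-1}}$ has zero curvature.

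Second, for any parallel section $\varphi$ one has $\Omega_0\varphi=\tilde\nabla^2\varphi=0$, so $\varphi\in\Gamma(\Delta_0)$; setting $\tilde\nabla_X\varphi=0$ in the identity then forces $\Omega_k\varphi=0$ inductively for every $k$, so $\varphi\in\Gamma(\mathcal{N}_{N-1})$. This is precisely the maximality claim. Combined with the preceding flatness, the standard local integration theory for flat connections produces, near any $q\in L^{p+1}$ and for any initial value $\varphi_q\in\mathcal{N}_{N-1}(q)$, a unique local parallel section of $\mathcal{N}_{N-1}$ with value $\varphi_q$ at $q$.

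The main delicate point I foresee is the Leibniz step producing the opening identity: one must keep careful track of which connection acts on which tensor factor of $\Omega_{k-1}(Y_0,\ldots)\varphi$, and in particular the cancellation between the $\omega$-contribution in $X\varphi=\tilde\nabla_X\varphi-\omega(X)\varphi$ and the term $\Omega_{k-1}\circ\omega$ in the definition of $\Omega_k$ is exactly what the inductive formula is engineered to encode. Once that identity is in hand, the rest is a clean dimension-count combined with Ambrose--Singer-style propagation.
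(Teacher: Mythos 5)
Your proposal is correct and follows essentially the same route as the paper: the single Leibniz/recursion identity you isolate is exactly the computation the paper uses both to show parallel sections lie in every $\mathcal{N}_k$ and to show that once the chain $\mathcal{N}_0\supseteq\mathcal{N}_1\supseteq\cdots$ stabilizes the stabilized bundle is parallel, with the same rank count forcing stabilization by step $N-1$ and flatness coming from $\mathcal{N}_{N-1}\subseteq\Delta_0$. The only cosmetic difference is that you propagate stabilization forward directly with the identity, whereas the paper invokes the maximality already established; both close the argument identically.
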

\begin{proof}
    Suppose that $\varphi\in\Gamma(E)$ is a parallel section. Then as 
    $$0=d(d\varphi+\omega\varphi)=d(\omega\varphi)=(d\omega+[\omega,\omega])\varphi,$$
    we have that $\varphi\in\mathcal{N}_0$. If $\varphi\in\mathcal{N}_{k-1}$, then 
    \begin{align*}
        0&=\nabla_{X_{k+1}}(\Omega_{k-1}(X_0,\ldots,X_k)\varphi)=\nabla_{X_{k+1}}\Omega_{k-1}(X_0,\ldots,X_k)\varphi-\Omega_{k-1}(X_0,\ldots,X_k)\circ\omega(X_{k+1})\varphi\\
        &=\Omega_k(X_0,\ldots,X_{k+1})\varphi,\quad\forall X_i \in\Gamma(TL),
    \end{align*}
    which proves that $\varphi\in \mathcal{N}_k$ and inductively $\varphi\in\mathcal{N}_j$ for all $j$. Thus, any parallel flat subbundle is contained in $\mathcal{N}_{N-1}$. In particular, if $\mathcal{N}_{N-1}=0$ there are no non-trivial parallel sections.
    
    Assume that $\mathcal{N}_{N-1}\neq 0$, and consider the inclusions of $\mathbb{C}$-vector bundles 
    \begin{equation*}
        0\neq\mathcal{N}_{N-1}\subseteq\ldots\subseteq\mathcal{N}_0\subseteq\mathbb{C}^N=:\mathcal{N}_{-1}.
    \end{equation*}
    Let $k\in\{0,\ldots,N-1\}$ be the first index such that $\mathcal{N}_k=\mathcal{N}_{k-1}$. If $k=0$ this means that $E$ is flat and all flat bundles possess (local) parallel sections given any initial condition. Notice that in this case $\mathcal{N}_{N-1}=\mathcal{N}_0=\mathbb{C}^N$ since $\Omega_j=0$ for all $j$. Assume that $k\geq 1$. For any section $\xi$ of $\mathcal{N}_{k-1}$ and any $1\leq j\leq k$ we have that 
    $$0=\nabla(\Omega_{j-1}\xi)-\Omega_j\xi=\Omega_{j-1}\circ(\nabla\xi+\omega\xi),$$
    which shows that $\tilde{\nabla}\xi=\nabla\xi+\omega\xi\in\Gamma(T^{*}L\otimes \Delta_{j-1})$. Hence $\tilde{\nabla}\xi\in\Gamma(T^{*}L\otimes \mathcal{N}_{k-1})$, but by the choice of $k$, this proves that $\mathcal{N}_k\subseteq E$ is a parallel subbundle and then $\mathcal{N}_k\subseteq\mathcal{N}_{N-1}$ by the maximality property. Therefore $\mathcal{N}_k=\mathcal{N}_{N-1}$ is a flat parallel subbundle, which concludes the proof. 
\end{proof}

Using the above for the connection (\ref{definicion de la conexion en el trivial}), we can give a description of the moduli space $\mathcal{S}^{*}$. First, we notice that for $i\neq j$ the $i^{th}$-row of $\Omega_0(\partial_j,\partial_i)$ is the same as its $j^{th}$-row up to sign, and the remaining rows are zero. Thus, we can collect the non-trivial information of $\Omega_0$ in a single matrix. Let  $B:\mathbb{C}^{p+1}\rightarrow\mathbb{C}^{\binom{p+1}{2}}$ whose coefficients for $0\leq i<j\leq p$ are given by
$$B_{ijk}=\partial_i\Gamma^k_{jk}+2\Gamma^k_{ik}\Gamma^k_{jk}-2\Gamma^k_{ik}\Gamma^i_{ji}-2\Gamma^k_{jk}\Gamma^j_{ij} \text{ for }k\notin\{i,j\},$$
$$B_{iji}=\partial_i\Gamma^i_{ji}-2\Gamma^i_{ji}\Gamma^j_{ij},$$
$$B_{ijj}=\partial_j\Gamma^j_{ij}-2\Gamma^i_{ji}\Gamma^j_{ij}.$$
Then the $i^{th}$-row of $\Omega(\partial_j,\partial_i)$ is $2B_{ij}$ for $i<j$. Notice that the last two coefficients are precisely the ones that appear in the Sbrana-Cartan classification, yet the first one is new. In the same way as before, to $\Omega_k$ we can associate a matrix $B_k$ which contains its non-trivial data. Let $B_0=B$ and inductively
\begin{equation*}\label{definicion inductiva de B_n}
    B_{n+1}=\begin{pmatrix}
        \partial_0B_n-B_n\omega_0\\
        \vdots\\
        \partial_pB_n-B_n\omega_p
        \end{pmatrix}:\mathbb{C}^{p+1}\rightarrow\mathbb{C}^{\binom{p+1}{2}(p+1)^{n+1}}.
\end{equation*}
We conclude that
$$\mathcal{N}_p=\bigcap_{i=0}^p\ker(B_i).$$
Notice that the conjugation $C(e_i)=e_{\overline{i}}$ is parallel with respect to this connection and then, $\hat{\mathcal{N}}_{p}=\text{Re}_C(\mathcal{N}_{p})$ is the maximal parallel flat subbundle of the Sbrana bundle, i.e., its trivial holonomy.
\begin{defn}
    \normalfont Let $h:L^{p+1}\rightarrow\mathbb{S}^{n}$ be a submanifold with a conjugate chart.
    We say that $h$ is of the $k^{th}$-{\it species} for $1\leq k\leq p+1$ if the trivial holonomy of the Sbrana bundle $\hat{\mathcal{N}}_p\subseteq F$ has rank $(p+2-k)$ and is generic in the sense that intersects the open dense subset $\{v\in F: v_i\neq 0\text{ and }\sum_iv_i\neq 0\}\subseteq F$.
\end{defn}
\begin{remark}
    Our definition of species has a slight difference with the one in the Sbrana-Cartan to include semi-Riemannian ambient spaces. 
    The condition that $\tau$ in (\ref{solucion segunda especie sbrana cartan clasico}) has to be positive when the conjugate directions are real guarantees that the unique element in $\mathcal{U}$ has index $0$, in order to obtain a deformation on the Euclidean space, and not in the Lorentz space.
\end{remark}

\begin{remark}
    Given $h:L^{p+1}\rightarrow\mathbb{S}^n$ a submanifold with a conjugate chart, we call $L^j$ a {\it slice} of $L^{p+1}$ if $L^j$ is obtained after fixing some of the conjugate coordinate variables to some values. 
    In this case, the slice naturally has a conjugate chart for $H=h|_{L^j}$ by restricting the original coordinates to $L^j$.
    If $h:L^{p+1}\rightarrow\mathbb{S}^{n}$ is of $k^{th}$-species, with $\min\{2,k\}<p+1$, generically we can construct new submanifolds of some species by taking slices. Indeed, let $L^j\subseteq L^{p+1}$ a slice with $k\leq j$, then the trivial holonomy of the Sbrana bundle of $H=h|_{L^j}$ is at most $(p+2-k)$. Indeed, the rank of the Sbrana bundle of $H$ is $(p+2-k)$ if and only if the matrix
\begin{equation*}\label{mathcal B}
    \mathcal{B}=\mathcal{B}_{L^{p+1}}=(B_0^T\, B_1^T\,\ldots\, B_p^T)^T,
\end{equation*}
has rank $(k-1)$. Notice that the matrix $\mathcal{B}_{L^j}$ appears as a submatrix of the original $\mathcal{B}_{L^{p+1}}$, so it has less or equal rank. The condition of the trivial holonomy being generic is generically satisfied and in that case, $H$ is of $l^{th}$-species for some $l\leq k$.
\end{remark}

Assume now that $h$ is of the $k^{th}$-species for $1\leq k\leq p+1$, fix $q\in L^{p+1}$ and let
\begin{equation}\label{mathcal U}
    \mathcal{U}=\{u\in\hat{\mathcal{N}}_p(q):u\text{ is admissible}\}\subseteq\hat{\mathcal{N}}_p(q)\cap\{u=(u_i)_i:1+\sum_iu_i=0\}\cong\mathbb{R}^{p+1-k}.
\end{equation}
We have the natural bijection $u\rightarrow\varphi_u$ between $\mathcal{U}$ and $\mathcal{S}^{*}$, where $\varphi_u\in\Gamma(\hat{\mathcal{N}}_p)$ is the parallel section which satisfies $\varphi_u(q)=u$.
Naturally, the open subset 
\begin{equation}\label{mathcal Ur}
    \mathcal{U}_\mu=\{u\in\mathcal{U}:u\text{ is admissible and has index }\mu\}\subseteq\mathcal{U},
\end{equation}
is in bijection with $\mathcal{S}^{*}_\mu$.
We conclude:
\begin{thm}\label{S* es un abierto de R}
    Suppose that $h:L^{p+1}\rightarrow\mathbb{S}^{n}$ is of $k^{th}$-species for some $k\in\{1,\ldots,p+1\}$. Then $\mathcal{S}^{*}$ and $\mathcal{S}_\mu^{*}$ are naturally diffeomorphic to a finite union of open and convex subsets of $\mathbb{R}^{p+1-k}$ for all $0\leq\mu\leq p$. Moreover, $\mathcal{S}_0^{*}\cong\mathcal{U}_0\subseteq\mathbb{R}^{p+1-k}$ has at most $(p+1)$ connected components.
\end{thm}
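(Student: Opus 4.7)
The plan is to implement the reduction indicated in (\ref{mathcal U}) and (\ref{mathcal Ur}): parallel transport from a fixed base point $q$ identifies $\mathcal{S}^{*}$ with $\mathcal{U}$, and then the geometry of $\mathcal{U}$ inside an affine subspace of $\hat{\mathcal{N}}_p(q)$ is controlled by a linear hyperplane arrangement.

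First I would verify the diffeomorphism $\mathcal{S}^{*}\cong\mathcal{U}$ (and the restriction $\mathcal{S}^{*}_{\mu}\cong\mathcal{U}_{\mu}$). By Corollary \ref{corolario f}, an element $\varphi\in\mathcal{S}^{*}$ is precisely an admissible parallel section of the Sbrana bundle. The conjugation is $\tilde\nabla$-parallel and the quantity $\sum_j\varphi_j$ is constant along parallel sections (as already noted in the remark preceding Definition \ref{sbrana bundle}), so such a $\varphi$ takes values in the trivial holonomy $\hat{\mathcal{N}}_p$ and satisfies $\sum_j\varphi_j=-1$ throughout its domain. The flatness of $\hat{\mathcal{N}}_p$ gives, for every $u\in\mathcal{U}$, a unique parallel extension $\varphi_u$ defined on a neighbourhood of $q$ (where the open conditions $\varphi_i\neq 0$ persist). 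The resulting map $u\mapsto\varphi_u$ is smooth in initial conditions and inverse to evaluation at $q$, so it is a diffeomorphism; since the index is locally constant along parallel sections, it restricts to $\mathcal{S}^{*}_{\mu}\cong\mathcal{U}_{\mu}$.

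Next I would identify the ambient affine space. By definition of the $k^{th}$-species, $\hat{\mathcal{N}}_p(q)$ has real dimension $p+2-k$. The compatibility $\overline{u_i}=u_{\overline{i}}$ forces $\sum_i u_i\in\mathbb{R}$, so the constraint $1+\sum_i u_i=0$ is a single real linear equation, cutting out an affine subspace $H_q\cong\mathbb{R}^{p+1-k}$. Inside $H_q$, the set $\mathcal{U}$ is the complement of the loci $\{u_i=0\}$: for indices with $i\neq\overline{i}$ these are real codimension~$2$ and do not disconnect, while for indices with $i=\overline{i}$ they are affine hyperplanes. Hence $\mathcal{U}$ is a finite disjoint union of open chambers of this arrangement in $H_q$; each chamber is the intersection of $H_q$ with an open orthant of $\mathbb{R}^{p+1}$ (cut by the remaining codimension-$2$ conditions on the complex coordinates), hence is convex. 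The subset $\mathcal{U}_{\mu}$ is obtained by imposing that exactly $P=p-s$ of the real $u_i$'s are positive, which is a union of such chambers, hence also a finite union of open convex subsets of $\mathbb{R}^{p+1-k}$.

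It remains to bound the number of connected components of $\mathcal{U}_0$. The identity $P=p-s$ together with there being $p+1-2s$ real indices forces $(p+1-2s)-(p-s)=1-s$ real coordinates to be negative, so $\mathcal{U}_0=\emptyset$ unless $s\in\{0,1\}$. For $s=0$ every admissible $u$ has exactly one negative real coordinate, yielding at most $\binom{p+1}{1}=p+1$ sign patterns and hence at most $p+1$ chambers. For $s=1$ there is a unique admissible sign pattern on the $p-1$ real coordinates (all positive), and the constraint $2\operatorname{Re}(u_0)=-1-\sum_{i\geq 2}u_i$ together with $u_0\neq 0$ defines a connected half-plane in $u_0$, yielding a single chamber. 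In either case $\mathcal{U}_0$ has at most $p+1$ connected components. I expect the only mild obstacle to be verifying that each orthant's intersection with the proper subspace $\hat{\mathcal{N}}_p(q)\cap\{1+\sum_i u_i=0\}$ is still convex, which is immediate since a linear subspace meets an open convex set in a convex (possibly empty) set.
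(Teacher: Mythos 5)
Your proposal follows essentially the same route as the paper: identify $\mathcal{S}^{*}$ with the initial conditions $\mathcal{U}$ via parallel sections of the flat subbundle $\hat{\mathcal{N}}_p$, place $\mathcal{U}$ inside the affine slice $\{1+\sum_i u_i=0\}\cong\mathbb{R}^{p+1-k}$, and then count components of $\mathcal{U}_0$ using the index formula $\mu=p-(s+P)$, which forces $s\le 1$ and gives the same dichotomy the paper uses (all-real chart: at most $p+1$ sign patterns; one complex pair: a single convex chamber). The one place your reasoning overreaches is the sentence asserting that a chamber of the unrestricted $\mathcal{U}$ (or of $\mathcal{U}_\mu$ for general $\mu$) remains convex after being ``cut by the remaining codimension-$2$ conditions on the complex coordinates'': removing a codimension-$2$ locus preserves connectedness but not convexity, so this only works when the orthant conditions already force $u_i\neq 0$ for the complex indices (as happens in the $\mu=0$, $s=1$ case you treat correctly). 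The paper's own proof does not supply an argument for that general convexity claim either, so on the part that is actually argued — the bound on the components of $\mathcal{U}_0$ — your proof and the paper's coincide.
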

\begin{proof}
    By the above discussion, we only need to bound the number of connected components of $\mathcal{S}_0^{*}$. Proposition~\ref{singular signature es el indice de la metrica} bounds the number of connected components of the set $U=\{\varphi:\varphi\text{ is admissible of index } 0\}$. 
    If the conjugate chart has a complex conjugate chart this set is convex. If the conjugate coordinates are real then $U$ has $(p+1)$ convex components determined by the choice of which coordinate is negative. Thus, $\mathcal{U}_0=U\cap\mathcal{N}_p$ has at most $(p+1)$ components that are convex since each one is an intersection of convex subsets.
\end{proof}

In order to recover the discrete and continuous types of hypersurfaces in the Sbrana-Cartan classification we introduce the following concept. The last remark also let us bound the number of connected components.
\begin{defn}\label{definicion tipo}
    \normalfont We say that a generic hypersurface $f:M^n\rightarrow\mathbb{R}^{n+1}$ of rank $2\leq p+1<n$ is of the {\it $r^{th}$-type}, for $r\in\{0,\ldots, p\}$ if the set of genuine deformations $g:M^n\rightarrow\mathbb{R}^{n+p}$ is naturally an union of at most $(p+1)$ convex open subsets of $\mathbb{R}^{r}$.
\end{defn}

Finally, we can prove Theorem \ref{teorema introduccion}.
\begin{proof}[Proof of Theorem \ref{teorema introduccion}]\label{prueba toerema intro}
    As discussed in the preliminaries, any hypersurface $f:M^n\rightarrow\mathbb{R}^{n+1}$ of rank $2<p+1<n$ is genuinely rigid in $\mathbb{R}^{n+q}$ for any $q<p$ if we add the hypotheses of not being $(n-p+3)$-ruled for $p\geq 7$. To conclude the proof, by Theorem \ref{Teorema principal de Sbrana Cartan} and Theorem \ref{S* es un abierto de R} we only need to show that any genuine deformation $g:M^n\rightarrow\mathbb{R}^{n+p}$ of $f$ is non-degenerate.

    First, observe that the cases $p\leq 4$ and $p\geq 7$ are immediate by Corollary \ref{genuinas son no degeneradas}.
    
    For $p\in\{5,6\}$ we use Remark \ref{p=5,6}. Assume that $\mathcal{S}(\beta)$ degenerates. Then by Remark \ref{p=5,6}, $\Delta_g=\Gamma\subsetneq R^d$, where $R^d$ is some mutual ruling for $f$ and $g$. Denote $\tilde{R}=R\cap\Gamma^\perp$. As $R^d$ is totally geodesic, $C_T(\tilde{R})\subset\tilde{R}$  for all $T\in\Gamma$, and then by the generic condition we get $X_i\in\tilde{R}$ where $X_i$ is some eigenvector of the semisimple endomorphism $C_{T_0}$. However, this implies that $X_i\in\Gamma$, since the eigenvectors of $C_{T_0}$ diagonalize $\beta$ by (\ref{beta y CT}) and $\beta(X_i,X_i)=0$ as $X_i\in R^d$, which is a contradiction. Thus, $\mathcal{S}(\beta)$ is non-degenerate. The Main Lemma and Corollary 2 of \cite{Moo} imply that $g$ is non-degenerate.
\end{proof}

\section{Deformations of generic hypersurfaces in codimension 2}\label{seccion deformaciones en codim 2}
In this section we apply Theorem \ref{teorema introduccion} to prove Theorem \ref{Descripcion SC p=2 generica}. It is an analogous description to the one given by Sbrana and Cartan, and characterizes all the deformable generic hypersurfaces $f:M^n\rightarrow\mathbb{R}^{n+1}$ in codimension 2 and its moduli space of deformations. As already observed, the hypothesis of being generic is to discard the surface-like and ruled type of situation. 

\text{ }

We start by recalling Sbrana-Cartan hypersurfaces of {\it intersection type}, as named in \cite{FF}. 
They are Rimeannian submanifolds $M^n$ obtained by intersecting two flat hypersurfaces $F:U_1\subseteq\mathbb{R}^{n+1}\rightarrow \mathbb{R}^{n+2}_\nu$ and $G:U_1\subseteq\mathbb{R}_{\mu}^{n+1}\rightarrow \mathbb{R}^{n+2}_\nu$ in general position. Then 
$$M^n=F_1(U_1)\cap F_2(U_2)\subseteq\mathbb{R}_{\nu}^{n+2},$$ 
$f$, $g$ stands for the inclusions of $M^n$ into $U_1$ and $U_2$ respectively, and $H:=F\circ f= G\circ g$. They were introduced in \cite{DFT2} for $(\mu,\nu)=(0,0)$ and studied in \cite{DF6} for $(\mu,\nu)=(1,1)$. The case $(\mu,\nu)=(0,1)$ is new and necessary to present the deformations of hypersurfaces in codimension $2$. 

A hypersurface $f:M^n\rightarrow\mathbb{R}^{n+1}$ of intersection type is determined by the conjugate chart $(u,v)\in\mathbb{R}^2$ of its Gauss map $h:L^2=M^n/\Gamma\rightarrow\mathbb{S}^n$. In fact, the Christoffel symbols satisfy 
\begin{equation}\label{equacion que caracteriza las intersecciones}
    \partial_v\Gamma_{uv}^v-\Gamma_{vu}^u\Gamma_{uv}^v+g_{uv}=0.
\end{equation}
Namely, if $Q$ is the hyperbolic linear operator 
$$Q:=\partial_{uv}^2-\Gamma_{vu}^u\partial_u-\Gamma_{uv}^v\partial_v+g_{uv},$$
for which $Q(h)=Q(\gamma)=0$ where $\gamma$ is the support function of $f$, then one of its Laplace invariants vanishes.
Moreover, if (\ref{equacion que caracteriza las intersecciones}) holds, then any non-degenerate deformation of $f$ is obtained as an intersection. 
In fact, in \cite{DFT2} they show that if $g$ is any such deformation of $f$ given by $\varphi=(\varphi_0,\varphi_1)\in\mathcal{S}^{*}$ with $\varphi_1<-1$ then the index of $\varphi$ is $\mu=0$ and the intersection is in $\mathbb{R}^{n+2}$. 
If $\varphi_1\in(-1,0)$ then the index of $\varphi$ is $\mu=1$ and they intersect in $\mathbb{R}_1^{n+2}$ as in \cite{DF6}. 
Similarly, if $\varphi_1>0$ then the index of $\varphi$ is $\mu=0$ and the intersection is in $\mathbb{R}_1^{n+2}$.

By Theorem 1 of \cite{DF3}, in order for a generic hypersurface $f:M^n\rightarrow \mathbb{R}^{n+1}$ to have a genuine deformation in codimension $2$, its rank must be at most $3$. If it is less than $2$, then $M^n$ is flat, and all the local immersion are described in Corollary 18 of \cite{FF}. Theorem \ref{teorema introduccion} characterizes the rank $3$ case. Theorem 1 of \cite{DFT} describes when the rank is $2$, but this result has a gap that we discuss next.

If $f:M^n\rightarrow \mathbb{R}^{n+1}$ is a Sbrana-Cartan hypersurface, $g:M^n\rightarrow U\subseteq\mathbb{R}^{n+1}$ a genuine deformation of $f$ and $j:U\subseteq\mathbb{R}^{n+1}\rightarrow\mathbb{R}^{n+2}$ an isometric immersion with $\alpha^j\neq 0$, then $\hat{g}=j\circ g$ is generically also a genuine deformation of $f$ which is not considered in Theorem 1 of \cite{DFT}.
In particular, for rank two generic Sbrana-Cartan hypersurfaces, there are more genuine deformations than the moduli space $\mathcal{C}_h$ described in that paper. 
As defined in \cite{FF}, we say that a genuine deformation $g$ of $f$ is {\it honest} if $g$ is not a composition as before.
The set $\mathcal{C}_h$ measures the honest deformations of $f$ except for Sbrana-Cartan hypersurfaces of intersection type. For such hypersurfaces, some deformations described by $\mathcal{C}_h$ are not honest. 
Indeed, let $\hat{g}:M^{n}\rightarrow\mathbb{R}^{n+2}$ be a genuine deformation of a rank $2$ generic hypersurface $f$ associated with some element in $\mathcal{C}_h$ and assume that $\hat{g}=j\circ g$ for some isometric immersions $g:M^{n}\rightarrow U\subseteq\mathbb{R}^{n+1}$ and $j:U\subseteq\mathbb{R}^{n+1}\rightarrow\mathbb{R}^{n+2}$. If $\Gamma\subseteq TM$ is elliptic (that is, some splitting tensor $C_T$ has non-real eigenvalues), then
$$\alpha^j(u,u)+\alpha^j(v,v)=0,$$ 
for some basis $u,v\in\Gamma^\perp$. This and the flatness of $\alpha^j$ imply that $\alpha^{\hat{g}}=\alpha^g$. 
This is a contradiction since the deformations described by $\mathcal{C}_h$ satisfy that $\dim(\mathcal{S}(\alpha^{\hat{g}}))=2$.
Then $\Gamma\subseteq TM$ is hyperbolic (that is, some splitting tensor $C_T$ is semisimple over $\mathbb{R}$), and let $(u,v)\in\mathbb{R}^2$ be the conjugate chart of the Gauss map $h:L^2=M^n/\Gamma\rightarrow\mathbb{S}^n$ of $f$ satisfying (\ref{Bajar coordenadas}) for $X_0, X_1\in\Gamma^{\perp}$ the eigenvectors of the splitting tensors of $\Gamma\subseteq TM$. Then $g$ and ${\hat{g}}$ are genuine deformations of $f$ associated to some $\varphi=(\varphi_0,\varphi_1)\in\mathcal{S}^{\#}$ and $(U,V)\in\mathcal{C}_h$ respectively. Define
$$\hat{\varphi}_0(u,v):=2U(u)e^{\int_0^{v}2\Gamma_{vu}^{u}(u,s)ds},\quad\hat{\varphi}_1(u,v):=2V(v)e^{\int_0^{u}2\Gamma_{uv}^{v}(s,v)ds}\quad\text{ and }\quad\hat{\varphi}_{*}:=-(1+\hat{\varphi}_0+\hat{\varphi}_1)\neq0.$$ 
By the definition of $\mathcal{C}_h$ we have that  $\hat{\varphi}=(\hat{\varphi}_0,\hat{\varphi}_1,\hat{\varphi}_{*})$ is admissible of index $0$ and $Q(\sqrt{|\hat{\varphi}_{*}|})=0$. 
Codazzi equation implies that
$$\alpha^g(X_0,X_1)=\alpha^{\hat{g}}(X_0,X_1)=\alpha^j(X_0,X_1)=0,$$ 
and for $i=0,1$, let
$$\hat{\eta}_i:=\frac{\alpha^{\hat{g}}(X_i,X_i)}{\langle AX_i,X_i\rangle}=\frac{\alpha^{g}(X_i,X_i)}{\langle AX_i,X_i\rangle}+\frac{\alpha^j(X_i,X_i)}{\langle AX_i,X_i\rangle}=:\eta_i+\varepsilon_i.$$
By flatness of $j$ and dimension reasons, we can assume that $\varepsilon_1\neq0$ and $\varepsilon_0=0$. Thus,
$$1+\frac{1}{\hat{\varphi}_0}=\langle\hat{\eta}_0,\hat{\eta}_0\rangle=\langle\eta_0,\eta_0\rangle=1+\frac{1}{\varphi_0}.$$
Here we used the geometric interpretation of $(U,V)\in\mathcal{C}_h$. 
Then $\hat{\varphi}_{*}=\varphi_1-\hat{\varphi}_1$ and by (\ref{derivadas de phi_i caso p+1}) we have that $\partial_u\hat{\varphi}_{*}=2\Gamma_{uv}^v\hat{\varphi}_{*}$, but in this case, 
\begin{equation*}
    0=Q(\sqrt{|\hat{\varphi}_{*}|})=(\partial_v\Gamma_{uv}^v-\Gamma_{vu}^u\Gamma_{uv}^v+g_{uv})\sqrt{|\hat{\varphi}_{*}|}.
\end{equation*}
Thus, all the genuine deformations described by $\mathcal{C}_h$ are honest except when (\ref{equacion que caracteriza las intersecciones}) is satisfied, that is, when the hypersurface is of intersection type.

Theorem 1 of \cite{DFT} for Sbrana-Cartan hypersurfaces of intersection type only says that the moduli space of honest deformations is a subset of $\mathcal{C}_h$. 
However, Theorem 33 of \cite{FF} classifies all the honest deformations in codimension 2 for hypersurfaces obtained as intersections in $\mathbb{R}^{n+2}$. Thus, we need to extend some concepts and results of \cite{FF} to describe the honest deformations of Sbrana-Cartan hypersurfaces which are intersections in $\mathbb{R}^{n+2}_1$. Almost all the ideas are analogous, so we will leave the details to the reader.

Let $H:M^{n}\rightarrow\mathbb{R}^{n+2}_\mu$ be a generic Riemannian submanifold of rank $2$, $\mathcal{S}(\alpha^H)=T^\perp_HM$. Then we can construct a polar surface in a similar way as in \cite{FF} or \cite{DF7}.
If $\Delta_H\subseteq TM$ is hyperbolic (the eigenvectors of the splitting tensors are real), then the polar surface is an immersion $g:L^2=M^n/\Delta_H\rightarrow\mathbb{R}^{n+2}_1$ such that $g_{*}(TL)=T^{\perp}_HM$ and has conjugate coordinates $(u,v)\in\mathbb{R}^2$. Namely, it satisfies a hyperbolic linear differential equation
$$\tilde{Q}(g)=\partial^2_{uv}g-\tilde{\Gamma}_{vu}^u\partial_ug-\tilde{\Gamma}_{uv}^v\partial_vg=0.$$

Let $f:M^n\rightarrow\mathbb{R}^{n+1}$ be a Sbrana-Cartan hypersurface obtained as an intersection of two flat Riemannian hypersurfaces of $\mathbb{R}^{n+2}_1$. The inclusion $H:M^n\rightarrow\mathbb{R}^{n+2}_1$ satisfies $\Delta_H=\Gamma$. Thus it has a polar surface. 
Moreover, as discussed in Section 9 of \cite{FF}, this surface is the sum of two curves
$$g(u,v)=\alpha_1(u)+\alpha_2(v),$$
with $\alpha_1', \alpha_1'', \alpha_2', \alpha_2''$ being pointwise linearly independent, $\langle\alpha_1',\alpha_1'\rangle=\langle\alpha_2',\alpha_2'\rangle=-1$ and $\cosh(\theta):=-\langle\alpha_1',\alpha_2'\rangle$. 
This characterizes the hypersurfaces of intersection type obtained as the intersection of two Riemmanian flat hypersurfaces in $\mathbb{R}^{n+2}_1$. 
Similarly to Theorem 32 of \cite{FF}, the Sbrana-Cartan hypersurface of intersection type is of discrete type if $I(H)\geq 2$ and continuous if $I(H)=1$, where $I(H):=I(\alpha_1,\alpha_2)$ is the shared dimension of $\alpha_1$ and $\alpha_2$ as defined in Section \ref{seccion shared dimension}. 

The following result is an adaptation of Theorem 33 of \cite{FF} for Lorentz ambient space.

\begin{thm}\label{deforamciones de las intersecciones en lorentz}
    Let $f:M^n\rightarrow\mathbb{R}^{n+1}$ be a Sbrana-Cartan hypersurface obtained as an intersection of two flat Riemmanian hypersurfaces of $\mathbb{R}^{n+2}_1$, and let $H:M^n\rightarrow\mathbb{R}^{n+2}_1$ be the inclusion. Then $f$ is honestly rigid in $\mathbb{R}^{n+2}$, unless $I(H)=2$. In the latter case, the moduli space of honest deformations is an open interval of $\mathbb{R}$.
\end{thm}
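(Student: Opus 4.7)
My plan is to mimic, almost verbatim, the proof of Theorem~33 of \cite{FF}, replacing the Euclidean ambient $\mathbb{R}^{n+2}$ of the intersection there by the Lorentz space $\mathbb{R}^{n+2}_1$, and adjusting the few places where the signature enters the computation. The whole argument rests on the polar surface description $g(u,v) = \alpha_1(u)+\alpha_2(v)$ of the inclusion $H: M^n \to \mathbb{R}^{n+2}_1$ recalled just before the statement, with $\alpha_1, \alpha_2$ of unit timelike speed and $\cosh\theta = -\langle\alpha_1',\alpha_2'\rangle$.

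Concretely, I would proceed as follows. Given an honest genuine deformation $\tilde g: M^n\to\mathbb{R}^{n+2}$ of $f$, I would use Theorem~1 of \cite{DFT} in the generic rank-$2$ codimension-$2$ setting to extract its associated pair $\varphi = (\varphi_0,\varphi_1)\in\mathcal{S}^*$ and, via the geometric interpretation of $\mathcal{C}_h$ reviewed just above the theorem, identify it with a second polar-type decomposition $\tilde g = \tilde\alpha_1 + \tilde\alpha_2$ in an ambient whose signature is controlled by the index of $\varphi$. The discussion immediately preceding the theorem shows that the obstruction to $\tilde g = j\circ\bar g$ being a composition through a flat $j$ is the vanishing of a Laplace invariant, i.e., precisely the intersection-type condition~(\ref{equacion que caracteriza las intersecciones}); hence honesty is not automatic and must be tested against such compositions. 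Next, translate honesty into the statement that the new polar curves $(\tilde\alpha_1,\tilde\alpha_2)$ are not obtained from $(\alpha_1,\alpha_2)$ by a flat ambient extension, and identify the group of admissible modifications preserving the Christoffel symbols $\tilde\Gamma^u_{vu},\tilde\Gamma^v_{uv}$ of the polar equation $\tilde Q(g)=0$.

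The decisive step is the case-by-case analysis according to $I(H) = I(\alpha_1,\alpha_2)$. When $I(H) = 1$, the two curves share only a line, so any admissible modification amounts to composing $g$ with an isometric immersion into a one-higher-dimensional flat ambient; these are compositions and therefore not honest, so $f$ is honestly rigid. When $I(H)\geq 3$ the shared subspace is large enough that the Frenet data of the two curves force any admissible modification to be trivial up to an ambient isometry, again yielding honest rigidity. Finally, when $I(H) = 2$ the admissible modifications form a one-parameter family, yielding a one-parameter family of candidate honest deformations; imposing that the deformed ambient be $\mathbb{R}^{n+2}$ (not $\mathbb{R}^{n+2}_1$) together with the non-degeneracy of $\beta$ restricts the parameter to an open interval of $\mathbb{R}$, as claimed.

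The main obstacle I expect is the signature-tracking step in the $I(H)=2$ case: in the Euclidean setting of \cite{FF} the one-parameter family comes from literal rotations in a two-plane, whereas here it is a boost in a Lorentzian two-plane; one must verify that for an open interval of the boost parameter the resulting polar surface indeed yields a Riemannian isometric immersion of $M^n$ in Euclidean codimension $2$. This amounts to controlling the index of $\varphi\in\mathcal{S}^*$ along the one-parameter family and invoking Proposition~\ref{singular signature es el indice de la metrica} to guarantee index~$0$, so the deformation lands in $\mathbb{R}^{n+2}$ rather than $\mathbb{R}^{n+2}_1$.
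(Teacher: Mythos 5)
Your proposal follows essentially the same route as the paper: both adapt Theorem 33 of \cite{FF} to the Lorentz ambient, reducing everything to the polar surface $g=\alpha_1+\alpha_2$ and a case analysis on $I(H)$, with the only real work in the $I(H)=2$ case where the signature enters. The paper makes explicit the one substitution you gloss over --- the Euclidean projection onto the shared space is replaced by the orthogonal decomposition of Lemma \ref{lema shared dimension}, producing the Lorentzian plane $\mathbb{V}^2$ on which the reversed Cauchy--Schwarz inequality for timelike vectors yields the open interval $\big(\langle\overline{\alpha}'_1,\overline{\alpha}'_1\rangle,\langle\overline{\alpha}'_2,\overline{\alpha}'_2\rangle^{-1}\big)$ for the parameter $t$ --- but this is precisely the signature-tracking step you correctly flag as the main obstacle.
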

\begin{proof}
    Since the proof is analogous to Theorem 33 of \cite{FF}, we will only point out the slight differences.
    Using the notations in \cite{FF}, we have in particular $s:=-\sinh(\theta)^2$, and our analogous functions $U=U(u)$ and $V=V(v)$ must satisfy
    \begin{equation}\label{desigualdades de U y V}
        U,V>-\frac{1}{s},\quad\text{ and }\quad (U+1)(V+1)<(1-s)UV.
    \end{equation}
    
    The hypersurface $f$ is honestly rigidity in $\mathbb{R}^{n+2}$ for $I(H)\neq 2$, for analogous reasons. 
    When $I(H)=2$, \cite{FF} uses the geometric characterization of this index to project into the shared space, which may not be possible in Lorentz ambient space. However, since $\text{span}(\alpha_1)$, $\text{span}(\alpha_2)\subseteq\mathbb{R}^{n+2}_1$ are Lorentzian subspaces, let $\mathbb{V}^l\subseteq\mathbb{R}^{n+2}_1$ be the Lorentz subspace given by Lemma \ref{lema shared dimension}, with $l\leq 2$. If $l=1$ then $I(H)=1$, so $l=2$. 
    Define $\overline{\alpha}_i$ as the orthogonal projection of $\alpha_i$ in $\mathbb{V}^2$ for $i=1,2$. 
    Then $\overline{\alpha}_1$, $\overline{\alpha}_2$ are light-like curves of $\mathbb{V}^2$ and 
    \begin{equation}\label{dsds}
        \langle\overline{\alpha}'_1,\overline{\alpha}'_1\rangle \langle\overline{\alpha}'_2,\overline{\alpha}'_2\rangle< \langle\overline{\alpha}'_1,\overline{\alpha}'_2\rangle^2=\langle\alpha'_1,\alpha'_2\rangle^2=\cosh(\theta)^2=1-s.
    \end{equation}
    Here we used the Cauchy Schwarz inequality for time-like vectors. Those curves work as the curves defined in \cite{FF} with the same notations.
    
    Following the steps in the proof given in \cite{FF}, we see that the moduli space of honest deformations is in bijection with 
    $$(U_t,V_t)=\big((t^{-1}\langle\overline{\alpha}'_1,\overline{\alpha}'_1\rangle-1)^{-1},(t\langle\overline{\alpha}'_2,\overline{\alpha}'_2\rangle-1)^{-1}\Big),$$
    for $0\neq t\in\mathbb{R}$ such that (\ref{desigualdades de U y V}) is satisfied. That and (\ref{dsds}) give us that $t$ must satisfy
    \begin{equation}\label{intervalo del t}
        \langle\overline{\alpha}'_1(u),\overline{\alpha}'_1(u)\rangle<t<\langle\overline{\alpha}'_2(v),\overline{\alpha}'_2(v)\rangle^{-1}.
    \end{equation}
    This is possible since $\langle\overline{\alpha}'_1,\overline{\alpha}'_1\rangle\langle\overline{\alpha}'_2,\overline{\alpha}'_2\rangle>\langle\alpha'_1,\alpha'_1\rangle\langle\alpha'_2,\alpha'_2\rangle=1$. If $t$ satisfies the above inequality for $(u,v)=(u_0,v_0)$, then (\ref{intervalo del t}) holds for $(u,v)$ in a neighborhood of $(u_0,v_0)$. Hence the honest deformations in $\mathbb{R}^{n+2}$ are in natural bijection with the open subset $\Big(\langle\overline{\alpha}'_1(u_0),\overline{\alpha}'_1(u_0)\rangle,\langle\overline{\alpha}'_2(v_0),\overline{\alpha}'_2(v_0)\rangle^{-1}\Big)\subseteq\mathbb{R}$.
\end{proof}

\section{Riemannian hypersurfaces in Lorentz ambient space}\label{Seccion lorentzianas}
All our analysis above can be translated for Riemannian hypersurfaces of the Lorentz space, that is, for generic hypersurfaces $f:M^n\rightarrow\mathbb{R}_1^{n+1}$ of rank $(p+1)$. In this subsection we provide some remarks about this together with an application for studying conformally flat Euclidean submanifolds. As the analysis is similar to the Euclidean case, we leave the details to the reader.

\text{ }

Analogously to the Euclidean case, there is a Gauss parametrization $(h,\gamma)$ for Riemannian submanifolds $F:M^n\rightarrow\mathbb{R}_1^{n+1}$ of rank $(p+1)$, where $h:L^{p+1}\rightarrow\mathbb{H}^n$ and $\gamma:L\rightarrow\mathbb{R}$ (see \cite{DG}). This parametrization can be used in the same way as before to study deformations of Lorentzian hypersurfaces.

Suppose that there is a non-degenerate deformation $g:M^n\rightarrow\mathbb{R}^{n+p}_{\mu}$ of $f$. If $M^n$ is generic, then we can define $\varphi_i$ and $\eta_i$ as in (\ref{definion de tau_i}) and (\ref{eta_i}), but in this case $\langle \eta_i,\eta_j\rangle=-(1+\frac{\delta_{ij}}{\varphi_i}),$
instead of (\ref{d_ij definicion}). This shows that the index of $\varphi$ is $p-\mu$, instead of being $\mu$ as in the Riemannian case. The diagonalizing directions also define a conjugate chart for $h:L^{p+1}\rightarrow\mathbb{H}^n$, in the same way as for submanifolds of the sphere, but in this case 
\begin{equation*}\label{Q_ij Lorentz}
    Q_{ij}(h)=\partial_{ij}^2h-\Gamma_{ji}^i\partial_ih-\Gamma_{ij}^j\partial_jh-g_{ij}h=0,\quad\forall i\neq j.
\end{equation*} 
We define $\mathcal{S}_{\mu}^{*}$ and $\mathcal{S}^{*}$ as in Definition \ref{definicion S y S*}. Theorem \ref{Teorema principal de Sbrana Cartan} holds as for Euclidean hypersurfaces, but $\mathcal{S}_{\mu}^{*}$ parametrize the non-degenerate deformations of $f$ in $\mathbb{R}^{n+p}_{p-\mu}$. Moreover, the concept of species can also be used to give an interpretation of $\mathcal{S}^{*}$.

This can be used to study conformally flat Euclidean submanifolds, namely, submanifolds $f:M^n\rightarrow\mathbb{R}^{n+p+1}$ that are conformally flat. It is known that a simply connected manifold $M^n$ with $n\geq 3$, is conformally flat if and only if it can be realized as a hypersurface of the light cone $V^{n+1}=\{X\in\mathbb{R}_1^{n+2}:\langle X,X\rangle=0, X\neq 0\}\subseteq\mathbb{R}_1^{n+2}$ (see for example \cite{AD}, \cite{DF2}). Thus, to obtain examples of conformally flat manifolds of $\mathbb{R}^{n+p+1}$ ($p\geq 1$), we can take a Riemannian manifold $N^{n+1}$ which has isometric immersions $F:N^{n+1}\rightarrow\mathbb{R}_1^{n+2}$ and $G:N^{n+1}\rightarrow\mathbb{R}^{n+p+1}$, and take $M^n$ as the intersection $F(N^{n+1})\cap V^{n+1}$ and $g=G|_{M^n}$. The first main result of \cite{DF2} states that this procedure generates all the simply connected examples for $p\leq n-4$. 

Consider $F:N^{n+1}\rightarrow\mathbb{R}_1^{n+2}$ a nowhere flat hypersurface of rank $(p+1)\geq 2$. Let $G:N^{n+1}\rightarrow\mathbb{R}^{n+q+1}$ be an isometric immersion. The Main Lemma for $\beta=\alpha^G\oplus\alpha^F$ proves that $q\geq p$, and if $q=p$ then $\mathcal{S}(\beta)=W=T^{\perp}_GM\oplus T^{\perp}_FM$. Assume that $q=p$. Notice that $G$ is always a non-degenerate deformation of $F$ since $W$ has positive signature. The techniques of this work can be used in this context analogously. In this case, the existence of the diagonalizing directions $X_i\in\Gamma^\perp$ for $\beta=\alpha^G\oplus\alpha^F:TN\times TN\rightarrow W^{p+1,0}$ comes from Theorem~2 of \cite{Moo}. Thus, the condition of being generic is not necessary in this context.

The proof of Theorem \ref{teorema introduccion} can be easily adapted to prove Theorem \ref{teorema de hipersuperficies lorentzianas}. When $N^{n+1}$ in Theorem \ref{teorema de hipersuperficies lorentzianas} is also generic, the conjugate chart is uniquely determined up to order and re-scaling factors of the basis. In this case, all the isometric immersions $G:N^{n+1}\rightarrow\mathbb{R}^{n+1+p}$ are in bijection some $\varphi=\varphi_G\in\mathcal{S}_p^{*}$. We can define the type of the hypersurface $F$ in the same way as in Definition \ref{definicion tipo}.
Thus, if the hypersurface $F$ is of the $r^{th}$-type, the set of such $G$'s is in bijection with $\mathcal{U}_p\subseteq\mathbb{R}^{r}$ as in (\ref{mathcal Ur}). In this case, $\mathcal{U}_p$ is actually diffeomorphic to $\mathbb{R}^{r}$. 
Indeed, since the index of $\varphi$ must be $0$, Proposition \ref{singular signature es el indice de la metrica} guarantees that $\varphi_i\in(-1,0)$ for all $i$, which is a convex set, thus $\mathcal{U}_p$ it is also convex.

\section{Appendix}\label{apendice}
In this section we prove some minor technical results used in this work.

\subsection{Description of an admissible \texorpdfstring{$\varphi$}{TEXT} and its index}
Here we characterize the property of a tuple $\varphi=(\varphi_i)_{i=0}^p$ being admissible (with respect to a basis $\{e_i\}_{i=0}^p$ of $\mathbb{W}_{\mathbb{C}}$ and a conjugation of indices $\overline{e_i}=e_{\overline{i}}$); see Definition \ref{definicion de ser singular}. This description relates $\varphi$ with a non-degenerate inner product and the index of $\varphi$ coincides with the index of such product. We assume that the first $2s$ coordinates are complex conjugate and the remaining are real.
\\\text{ }

Consider a tuple $\varphi=(\varphi_i)_{i=0}^p$ such that $\overline{\varphi_i}=\varphi_{\overline{i}}\neq 0$ for all $i$. Let $D_{\varphi}:\mathbb{W}_{\mathbb{C}}\rightarrow\mathbb{W}_{\mathbb{C}}$ the linear map defined by $D_\varphi(e_i)=\sum_{j}d_{ij}e_j$, where $d_{ij}=1+\frac{\delta_{ij}}{\varphi_i}$. Since $\overline{D_\varphi(e_i)}=D_\varphi(e_{\overline{i}})$, this linear map can be considered as a real one $D_\varphi:\mathbb{W}\rightarrow\mathbb{W}$.  An easy induction process proves the following.
\begin{lema}\label{Lema del determinante de D}
    Let $\varphi=(\varphi_i)_{i=0}^p$ is such that $\varphi_i\neq 0$ for all $i$, then
    \begin{equation*}
        \det(D_{\varphi})=\dfrac{1+\sum_i\varphi_i}{\Pi_i\varphi_i}.
    \end{equation*}
    If $\det(D_{\varphi})\neq 0$, then $D_{\varphi}^{-1}$ is given by
    \begin{equation*}
        (D_{\varphi}^{-1})_{ij}=\delta_{ij}\varphi_i-\frac{\varphi^2_i}{1+\sum_k\varphi_k}.
    \end{equation*}
\end{lema}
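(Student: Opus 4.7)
The plan is to recognize $D_\varphi$ as a diagonal-plus-rank-one matrix and then apply the standard matrix determinant lemma together with the Sherman--Morrison formula. Set $\Phi=\operatorname{diag}(\varphi_0,\ldots,\varphi_p)$, $v=(\varphi_0,\ldots,\varphi_p)^T$ and $\mathbf{1}=(1,\ldots,1)^T$. The defining relation $d_{ij}=1+\delta_{ij}/\varphi_i$ gives $(\Phi D_\varphi)_{ij}=\varphi_i d_{ij}=\varphi_i+\delta_{ij}$, so
$$\Phi\,D_\varphi = I + v\,\mathbf{1}^T.$$
From this single identity both formulas of the lemma will fall out.

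For the determinant, I would apply the matrix determinant lemma, $\det(I+v\,\mathbf{1}^T)=1+\mathbf{1}^Tv=1+\sum_i\varphi_i$. Taking determinants of both sides of the displayed identity and dividing by $\det\Phi=\prod_i\varphi_i$ (which is nonzero by hypothesis) produces the announced formula for $\det D_\varphi$. For the inverse, under the assumption $\det D_\varphi\neq 0$, equivalently $1+\sum_k\varphi_k\neq 0$, the Sherman--Morrison formula gives
$$(I+v\,\mathbf{1}^T)^{-1} = I - \frac{v\,\mathbf{1}^T}{1+\sum_k\varphi_k}.$$
Hence $D_\varphi^{-1}=(I+v\,\mathbf{1}^T)^{-1}\Phi = \Phi - \tfrac{1}{1+\sum_k\varphi_k}\,v\,\mathbf{1}^T\Phi$, and reading off the $(i,j)$ entry (using that $(v\,\mathbf{1}^T\Phi)_{ij}=\varphi_i\varphi_j$) yields the stated expression.

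Alternatively, following the paper's suggestion, both identities can be proved by induction on $p$: splitting the first row of $D_\varphi$ via multilinearity of the determinant as $(1/\varphi_0,0,\ldots,0)+(1,1,\ldots,1)$ produces two smaller determinants that combine with the inductive hypothesis to yield the closed form, and the formula for $D_\varphi^{-1}$ can then be verified by a direct multiplication check. Either route is purely computational; there is no real obstacle, the only point of care being the bookkeeping of indices in the rank-one term.
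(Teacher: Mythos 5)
Your argument is correct, and it takes a genuinely different (and cleaner) route than the paper, which simply asserts that ``an easy induction process'' proves the lemma. Your identity $\Phi D_\varphi = I + v\,\mathbf{1}^T$ is right: $(\Phi D_\varphi)_{ij}=\varphi_i(1+\delta_{ij}/\varphi_i)=\varphi_i+\delta_{ij}$. The matrix determinant lemma and Sherman--Morrison then give both formulas in one stroke, whereas the induction route requires expanding the determinant by multilinearity and then a separate verification for the inverse. What your approach buys is conceptual transparency ($D_\varphi$ is a rank-one perturbation of a diagonal matrix, which is exactly why $\ker D_\varphi$ is one-dimensional in the admissible case, as the paper needs in \eqref{kernel de S}); what the induction buys is self-containedness, since it avoids invoking the two named matrix identities.

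One point you should not have glossed over: your computation yields
\begin{equation*}
    (D_\varphi^{-1})_{ij}=\delta_{ij}\varphi_i-\frac{\varphi_i\varphi_j}{1+\sum_k\varphi_k},
\end{equation*}
with $\varphi_i\varphi_j$ in the numerator of the rank-one correction, while the lemma as printed has $\varphi_i^2$. These disagree for $i\neq j$, so your claim that reading off the $(i,j)$ entry ``yields the stated expression'' is literally false. In fact your formula is the correct one and the printed statement contains a typo: $D_\varphi$ is symmetric, so $D_\varphi^{-1}$ must be symmetric, and $\delta_{ij}\varphi_i-\varphi_i^2/(1+\sum_k\varphi_k)$ is not symmetric in $i,j$; a direct check in the $2\times 2$ case confirms the off-diagonal entry is $-\varphi_0\varphi_1/(1+\varphi_0+\varphi_1)$. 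You should flag such a discrepancy explicitly rather than assert agreement.
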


When $\varphi$ is admissible the above lemma implies that $D_\varphi$ has a kernel of dimension exactly $1$. Indeed, in this case the determinant of the minor of $D_\varphi$ obtained by deleting the $i^{th}$ row and column is $\frac{-\varphi_i}{\Pi_{j\neq i}\varphi_j}\neq 0$. Moreover, we can verify that
\begin{equation}\label{kernel de S}
    \ker(D_{\varphi})=\text{span}\bigg\{\sum_j\varphi_je_j\bigg\}.
\end{equation}
Therefore, when $\varphi$ is admissible, $D_\varphi$ induces an non-degenerate inner product on the $p$-dimmension real vector space $\mathbb{W}/\ker(D_\varphi)$ by the formula 
\begin{equation}\label{producto caso singular}
    \langle [e_i],[e_j]\rangle=d_{ij}=1+\frac{\delta_{ij}}{\varphi_i}\quad\forall i,j\in I,
\end{equation}
where $[e]:=e+\ker(D_\varphi)$.

\begin{prop}\label{singular signature es el indice de la metrica}
    If $\varphi$ is admissible then the index of $\varphi$ is precisely the index of the non-degenerate product given in (\ref{producto caso singular}).
\end{prop}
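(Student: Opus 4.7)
The plan is to identify the quotient $\mathbb{W}/\ker(D_\varphi)$ with a concrete real hyperplane of $\mathbb{W}$ on which the induced form has a transparent diagonal shape, and then compute its signature by first computing it on all of $\mathbb{W}$ and then restricting.

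The starting observation is the decomposition
\[
G(x,y) := \sum_{i,j} d_{ij}\, x_i y_j \;=\; Q(x,y) + \xi(x)\,\xi(y),
\]
where $Q(x,y) := \sum_i x_i y_i/\varphi_i$ and $\xi(x) := \sum_i x_i$. Both $Q$ and $\xi$ are compatible with the conjugation and hence restrict to real objects on $\mathbb{W}$. By admissibility, $\xi(\sum_j \varphi_j e_j) = -1 \neq 0$, so the real hyperplane $H_{\mathbb{R}} := \ker\xi \cap \mathbb{W}$ is transverse to $\ker(D_\varphi) = \mathrm{span}\{\sum_j \varphi_j e_j\}$ and projects isomorphically onto $\mathbb{W}/\ker(D_\varphi)$. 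Since $\xi$ vanishes on $H_{\mathbb{R}}$, the induced inner product on the quotient is identified with $Q|_{H_{\mathbb{R}}}$, so the task reduces to computing the signature of this restriction.

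Next I would compute the signature of $Q$ on all of $\mathbb{W}$ in the real basis $u_k := (e_{2k}+e_{2k+1})/\sqrt{2}$, $v_k := i(e_{2k}-e_{2k+1})/\sqrt{2}$ for each of the $s$ conjugate pairs, together with $\{e_i\}_{i \geq 2s}$. Using $\varphi_{2k+1} = \overline{\varphi_{2k}}$, a direct calculation shows that the Gram matrix of $Q$ is block diagonal: each real index contributes a $1\times 1$ block $1/\varphi_i$ with sign equal to $\mathrm{sign}(\varphi_i)$, while each conjugate pair yields a $2\times 2$ block
\[
\begin{pmatrix}\mathrm{Re}(1/\varphi_{2k}) & -\mathrm{Im}(1/\varphi_{2k}) \\ -\mathrm{Im}(1/\varphi_{2k}) & -\mathrm{Re}(1/\varphi_{2k})\end{pmatrix},
\]
whose determinant is $-|1/\varphi_{2k}|^2 < 0$, hence of signature $(1,1)$ regardless of whether $\varphi_{2k}$ is genuinely complex or accidentally real. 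Thus $Q$ on $\mathbb{W}$ has signature $(s+P,\,s+N)$, where $N := p+1-2s-P$.

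Finally I would pass from $\mathbb{W}$ to $H_{\mathbb{R}}$ by identifying the $Q$-normal to the hyperplane. From $G(\sum_j \varphi_j e_j,\cdot) \equiv 0$ and $G = Q + \xi\otimes\xi$, one gets $Q(\sum_j \varphi_j e_j, y) = \xi(y)$, so $\sum_j \varphi_j e_j$ is $Q$-orthogonal to $\ker\xi$, with $Q(\sum_j \varphi_j e_j,\sum_j \varphi_j e_j) = \xi(\sum_j \varphi_j e_j) = -1 < 0$. Since the $Q$-normal line to $H_{\mathbb{R}}$ is time-like, the signature of $Q|_{H_{\mathbb{R}}}$ is $(s+P,\,s+N-1)$. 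Its index $s+N-1 = p-s-P$ is exactly the index of $\varphi$ in the sense of Definition \ref{definicion de ser singular}, finishing the proof. I expect the main (though still routine) obstacle to be the $2\times 2$ conjugate-pair block computation, which must be written out carefully so as to confirm the uniform signature $(1,1)$ in both the generic complex case and the degenerate case when $\varphi_{2k}$ happens to be real.
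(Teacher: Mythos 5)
Your proposal is correct and follows essentially the same route as the paper: both arguments diagonalize the auxiliary form $\langle e_i,e_j\rangle=\delta_{ij}/\varphi_i$ on $\mathbb{W}$ (finding signature $(s+P,\,s+N)$, with each conjugate pair contributing $(1,1)$), and both identify the quotient product with the restriction of that form to the $Q$-orthogonal complement of the time-like vector $\sum_j\varphi_je_j$ — your hyperplane $\ker\xi$ coincides with the paper's $\operatorname{span}\{v_j\}$, $v_j=e_j+\sum_k\varphi_ke_k$. The only cosmetic difference is that you handle each conjugate pair via the determinant of a $2\times2$ Gram block, while the paper uses square roots $\omega_j$ of $\varphi_{2j}$ to produce an explicit orthonormal basis.
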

\begin{proof}
    Denote by $\mu$ the index of the product given by (\ref{producto caso singular}). Consider on $\mathbb{W}_{\mathbb{C}}$ the bilinear product $\langle e_i,e_j\rangle=\frac{\delta_{ij}}{\varphi_i}$. This defines a product on $\mathbb{W}=\text{Re}_{C}(\mathbb{W}_{\mathbb{C}})$, where $C$ denotes the conjugation given by the conjugation of indices. We identify the signature of this product in two ways. Let
    $$\xi_{2j}=\frac{1}{\sqrt{2}}\Big(\omega_je_{2j}+\overline{(\omega_j e_{2j})}\Big)\quad\text{ and }\quad\xi_{2j+1}=\frac{1}{\sqrt{2}}\Big(i\omega_j e_{2j}+\overline{(i\omega_je_{2j})}\Big)\quad\text{ for }0\leq j<s,$$
    where $\omega_j$ is any of the two complex roots of $\varphi_{2j}$. For $j\geq 2s$ define
    $$\xi_j=\omega_je_j,$$
    where $\omega_j$ is the positive root of $|\varphi_j|$. Then $\{\xi_j\}_{j=0}^p$ is an orthonormal basis of $\mathbb{W}$ of index $p+1-(s+P)$.
    
    Setting $\xi=\sum \varphi_je_j$ and $v_j=e_j+\xi$, then $\langle \xi,\xi\rangle=-1$, $\langle v_j,v_j\rangle=1+\frac{\delta_{ij}}{\varphi_i}=d_{ij}$ and $\langle \xi,v_j\rangle=0$. This gives us the orthogonal decomposition $\mathbb{W}=\text{Re}(\text{span}\{v_j\})\oplus\text{span}\{\xi\}$, and then the product has index $\mu+1$. Thus $\mu=p-(s+P)$.
\end{proof}

\subsection{The shared dimension of two curves}\label{seccion shared dimension}
In this subsection we extend the concept of shared dimension of two curves, which was introduced in \cite{FF} for the Euclidean ambient space, to the semi-Euclidean case. 

\text{ }

Given two curves $\alpha_i:I_i\subseteq\mathbb{R}\rightarrow\mathbb{R}^N_\mu$ ($i=0,1$) in a semi-Euclidean ambient space, we define the index $\overline{I}(\alpha_1,\alpha_2)$ as the minimum integer $k$ such that $\langle\alpha_1'(u),\alpha'(v)\rangle$ can be written as a sum $\sum_{j=1}^k a_j(u)b_j(v)$ for some smooth functions $a_j,b_j$, $1\leq j\leq k$. Let
$$I(\alpha_1,\alpha_2)(u,v)=\lim_{\varepsilon\to0}\overline{I}(\alpha_1|_{(u-\varepsilon,u+\varepsilon)},\alpha_2|_{(v-\varepsilon,v+\varepsilon)}),$$
which is semicontinuous and constant along connected components of an open dense subset of the parameters $(u,v)$. Following \cite{FF}, we call this integer the {\it shared dimension} between $\alpha_1$ and $\alpha_2$. For Euclidean ambient space, this agrees (locally) with the dimension of $\text{span}(\alpha_1)\cap\text{span}(\alpha_2)$, where $\text{span}(\alpha_i)$ is the smallest subspace which contains the image of the curve $\alpha_i$. This is not true for semi-Euclidean ambient spaces. 
If $\text{span}(\alpha_1)$, $\text{span}(\alpha_2)$ and $\text{span}(\alpha_1)\cap\text{span}(\alpha_2)$ are non-degenerate subspaces of $\mathbb{R}^{N}_\nu$, then clearly $$\dim\Big(\text{span}(\alpha_1)\cap\text{span}(\alpha_2)\Big)\geq I(\alpha_1,\alpha_2).$$

The following lemma allow us to decompose the ambient space in relation to the shared dimension. The proof is similar to the one of Lemma 10 in \cite{FF}.
\begin{lema}\label{lema shared dimension}
    Let $\alpha_1,\alpha_2$ curves in $\mathbb{R}^{N}_\nu$ such that $\big(\text{span}(\alpha_i)\big)^{\perp}\subseteq\mathbb{R}^N_\nu$ is a definite subspace for $i=1,2$ and
    $$\mathbb{U}:=\text{span}(\alpha_1)+\text{span}(\alpha_2)\subseteq\mathbb{R}^{N}_\nu,$$ 
    is non-degenerate. Then there exists an orthogonal decomposition $\mathbb{R}^{N}_{\nu}=\mathbb{V}_1\oplus\mathbb{V}^l\oplus\mathbb{V}_2$ such that $l\leq I(\alpha_1,\alpha_2)$, and $\text{span}(\alpha_i)\subseteq\mathbb{V}_i\oplus\mathbb{V}^l$, $i=1,2$. In particular, $\dim(\text{span}(\alpha_1)\cap\text{span}(\alpha_2))\leq I(\alpha_1,\alpha_2)$.
\end{lema}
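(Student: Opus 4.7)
The plan is to adapt the proof of Lemma 10 in \cite{FF} to the semi-Euclidean setting in three stages: use the definiteness hypothesis to establish non-degeneracy of the relevant auxiliary subspaces; construct a candidate orthogonal decomposition; and bound the dimension $l$ of the ``shared'' piece by $I(\alpha_1,\alpha_2)$ via the local tensor decomposition of $\langle \alpha_1'(u), \alpha_2'(v)\rangle$.

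First I would prove that $\mathbb{U}_i := \mathbb{W}_i \cap \mathbb{W}_{3-i}^\perp$, where $\mathbb{W}_i := \text{span}(\alpha_i)$, is a non-degenerate subspace of $\mathbb{R}^N_\nu$. Since $\mathbb{W}_i^\perp$ is definite it is non-degenerate, hence so is $\mathbb{W}_i$, and the standard identity yields $\mathbb{U}_i^\perp = \mathbb{W}_i^\perp + \mathbb{W}_{3-i}$. Given $v \in \mathbb{U}_i \cap \mathbb{U}_i^\perp$, write $v = a+b$ with $a \in \mathbb{W}_i^\perp$, $b \in \mathbb{W}_{3-i}$; the relations $v \in \mathbb{W}_i$ and $v \in \mathbb{W}_{3-i}^\perp$ force $\langle a,a\rangle = \langle b,b\rangle$ and hence $\langle v,v\rangle = 0$. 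Since $v \in \mathbb{W}_{3-i}^\perp$, which is definite, we conclude $v=0$, proving $\mathbb{U}_i$ is non-degenerate. Consequently $\mathbb{W}_i$ splits orthogonally as $\mathbb{W}_i = \mathbb{U}_i \oplus K_i$, with $\dim K_i = r$ equal to the rank of the bilinear pairing $\mathbb{W}_1 \times \mathbb{W}_2 \to \mathbb{R}$.

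The candidate decomposition takes $\mathbb{V}_i \supseteq \mathbb{U}_i$, extended into $\mathbb{W}_{3-i}^\perp \cap (\mathbb{V}^l)^\perp$ so as to maximize $\dim \mathbb{V}_i$ while maintaining $\mathbb{V}_1 \perp \mathbb{V}_2$, with $\mathbb{V}^l$ chosen orthogonal to $\mathbb{V}_1 \oplus \mathbb{V}_2$ to cover $\mathbb{W}_1 + \mathbb{W}_2$ modulo the $\mathbb{V}_i$'s; the definite orthogonal complement $\mathbb{U}^\perp$ is freely distributed between $\mathbb{V}_1$ and $\mathbb{V}_2$. The main obstacle is the tight bound $l \leq I$. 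I would exploit the minimal local decomposition $\langle \alpha_1'(u), \alpha_2'(v)\rangle = \sum_{j=1}^I a_j(u) b_j(v)$, valid on an open dense subset where $I$ is locally constant, with $\{a_j\}$ and $\{b_j\}$ linearly independent. This produces surjective linear maps $L_i : \mathbb{W}_i^0 := \text{span}\{\alpha_i'\} \twoheadrightarrow \mathbb{R}^I$ satisfying $\langle \xi,\eta\rangle = L_1(\xi) \cdot L_2(\eta)$, showing that the coupling between the derivative spans is exactly $I$-dimensional; a careful dimension count accommodating the at most one dimension discrepancy between $\mathbb{W}_i$ and $\mathbb{W}_i^0$ (from the base point $\alpha_i(u_0)$), exactly as in \cite{FF}, then yields $\dim \mathbb{V}^l \leq I$. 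Finally, the ``in particular'' statement follows at once: for $w \in \mathbb{W}_1 \cap \mathbb{W}_2$, the two decompositions $w \in \mathbb{V}_1 \oplus \mathbb{V}^l$ and $w \in \mathbb{V}_2 \oplus \mathbb{V}^l$ together with $\mathbb{V}_1 \cap \mathbb{V}_2 = 0$ force the $\mathbb{V}_i$-components to vanish, so $w \in \mathbb{V}^l$, giving $\dim(\text{span}(\alpha_1) \cap \text{span}(\alpha_2)) \leq l \leq I(\alpha_1,\alpha_2)$.
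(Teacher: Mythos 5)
Your opening step (non-degeneracy of $\mathbb{U}_i=\mathbb{W}_i\cap\mathbb{W}_{3-i}^{\perp}$) is correct, and your observation that the minimal decomposition $\langle\alpha_1'(u),\alpha_2'(v)\rangle=\sum_{j=1}^{I}a_j(u)b_j(v)$ yields surjective maps $L_i$ factoring the pairing through $\mathbb{R}^I$ does prove that the bilinear pairing $\mathbb{W}_1\times\mathbb{W}_2\to\mathbb{R}$ has rank $r\leq I$ (up to base-point corrections). But that is only a necessary condition; the heart of the lemma is the passage from ``the pairing has rank $r\leq I$'' to ``there is an \emph{orthogonal} decomposition $\mathbb{V}_1\oplus\mathbb{V}^l\oplus\mathbb{V}_2$ with $l\leq I$,'' and this is precisely the step you do not carry out. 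Your construction of the decomposition is circular ($\mathbb{V}_i$ is defined by extending into $(\mathbb{V}^l)^{\perp}$ before $\mathbb{V}^l$ has been defined), and the natural non-circular version fails: writing $\mathbb{W}_i=\mathbb{U}_i\oplus K_i$ with $\dim K_i=r$, the subspaces $K_1$ and $K_2$ are in general \emph{distinct} $r$-dimensional subspaces that pair non-trivially with each other, so putting them both into $\mathbb{V}^l$ only gives $l\leq 2I$. A concrete instance: in $\mathbb{R}^3_1$ with basis $e_0,e_1,e_2$ and signature $(-,+,+)$, take $\alpha_1'\equiv e_0$ and $\alpha_2'\equiv e_0+ae_1$ with $0<|a|<1$; here $I=r=1$, $\mathbb{U}_i=0$, $K_1+K_2=\mathrm{span}\{e_0,e_1\}$ is $2$-dimensional, and the valid decomposition ($\mathbb{V}^l=\mathrm{span}\{e_0\}$, $\mathbb{V}_2=\mathrm{span}\{e_1\}$, $\mathbb{V}_1=\mathrm{span}\{e_2\}$) requires $\mathbb{V}^l$ to be shared asymmetrically between $K_1$ and $K_2$ rather than to contain their sum. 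Deferring this to ``a careful dimension count exactly as in [FF]'' begs the question, since the whole point of the lemma is that the Euclidean argument of [FF] (where one can take $\mathbb{V}^l=\mathbb{W}_1\cap\mathbb{W}_2$) does not transfer directly.

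The paper closes this gap by a different device: it lifts the curves to $\mathbb{R}^{N+k}_{\nu}=\mathbb{R}^{N}_{\nu}\oplus\mathbb{R}^{k}_{0}$ via $\hat{\alpha}_1=(\alpha_1,-\int a_1,\ldots,-\int a_k)$ and $\hat{\alpha}_2=(\alpha_2,\int b_1,\ldots,\int b_k)$, which makes the lifted spans orthogonal, so their intersection $\mathcal{E}$ is a null subspace; it then chooses an orthogonal (non-direct) splitting $\mathbb{R}^{N+k}_{\nu}=\hat{\mathbb{V}}_1+\hat{\mathbb{V}}_2$ with $\hat{\mathbb{V}}_1\cap\hat{\mathbb{V}}_2=\mathcal{E}$, sets $\mathbb{V}_i=\hat{\mathbb{V}}_i\cap(\mathbb{R}^N_{\nu}\times 0)\subseteq\mathrm{span}(\alpha_{3-i})^{\perp}$ (which is where the definiteness hypothesis enters, making the $\mathbb{V}_i$ definite and the sum $\mathbb{V}_1\oplus\mathbb{V}_2$ non-degenerate), and defines $\mathbb{V}^l=(\mathbb{V}_1\oplus\mathbb{V}_2)^{\perp}$; the bound $l\leq k$ then falls out of a dimension count involving $\dim\mathcal{E}$. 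Your factorization through $\mathbb{R}^I$ is morally the same information as this lift, but to complete your proof you would need to geometrize it in some such way; as written, the decomposition and the bound $l\leq I$ are asserted rather than proved.
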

\begin{proof}
    Clearly, we can assume that $\mathbb{U}=\mathbb{R}^N_{\nu}$. Write $\langle\alpha_1'(u),\alpha_2'(v)\rangle=\sum_{i=1}^ka_i(u)b_i(v)$, and set $$\hat{\alpha}_1(u)=\Big(\alpha_1(u),-\int_0^ua_1(s)ds,\ldots,-\int_0^ua_k(s)ds\Big),\quad\text{ and }\quad\hat{\alpha}_2(v)=\Big(\alpha_2(v),\int_0^vb_1(s)ds,\ldots,\int_0^vb_k(s)ds\Big),$$
    as orthogonal curves in $\mathbb{R}^{N+k}_{\nu}=\mathbb{R}^{N}_{\nu}\oplus\mathbb{R}^{k}_{0}$. Consider $\mathcal{E}=\text{span}(\hat{\alpha}_1)\cap\text{span}(\hat{\alpha}_2)\subseteq\mathbb{R}^{N+k}_{\nu}$ which is a null subspace. Then using a pseudo-orthogonal basis we can express $\mathbb{R}^{N+k}_{\nu}=\hat{\mathbb{V}}^{n_1}_1+\hat{\mathbb{V}}^{n_2}_2$, with $\hat{\mathbb{V}}^{n_1}_1$, $\hat{\mathbb{V}}^{n_2}_2$ orthogonal and $\hat{\mathbb{V}}^{n_1}_1\cap\hat{\mathbb{V}}^{n_1}_1=\mathcal{E}$.
    Define for $i=1,2$ the subspaces $\mathbb{V}_i=\hat{\mathbb{V}}_i\cap(\mathbb{R}^N_{\nu}\times 0)\subseteq\mathbb{R}^N_{\nu}$. 
    Notice that $\mathbb{V}_i\subseteq\text{span}(\alpha_{i+1})^\perp$ (index modulo $2$). Hence $\mathbb{V}_1$ and $\mathbb{V}_2$ are orthogonal definite subspaces. 
    Define then $\mathbb{V}^l:=(\mathbb{V}_1\oplus\mathbb{V}_2)^{\perp}$. Thus, $\text{span}(\alpha_i)\subseteq\mathbb{V}_i\oplus\mathbb{V}^l$ and $$l=\dim(\mathbb{V}^l)=N-\dim(\mathbb{V}_1)-\dim(\mathbb{V}_2)\leq N-(n_1-k)-(n_2-k)=N+2k-(N+k-\dim(\mathcal{E}))\leq k.$$
\end{proof}

\subsection{Diagonalizable Codazzi tensors}
The main goal of this subsection will be to prove Proposition \ref{demostracion Bajar coordenadas} which states when diagonalizing directions of a Codazzi tensor descend as coordinate vectors to the leaf space of the nullity distribution of such tensor. This result is presented in a general context since it has independent interest. This result was present in the literature when the leaf space has dimension 2 in several works, see for example \cite{FF}, \cite{DFT2}, \cite{DF2} and \cite{DFT}.

\text{ }

\begin{defn}
\normalfont Consider a real vector bundle $F\rightarrow M^n$ with a connection $\nabla=\nabla^F$. We say that a bilinear symmetric tensor $\beta:TM\times TM\rightarrow F$ {\it satisfies Codazzi equation} if $\forall X,Y,Z\in TM$
\begin{equation}\label{definicion de ser tensor Codazzi}
    (\nabla_X\beta)(Y,Z)=(\nabla_Y\beta)(X,Z).
\end{equation}
We denote $\Delta=\Delta_\beta$ the nullity of $\beta$. 
\end{defn}

\begin{remark}
    Codazzi equation implies that the nullity is in fact a totally geodesic distribution on an open dense subset of $M^n$, where $\Delta$ has constant dimension on each connected component. We assume that this is the case and that $L^l=M^n/\Delta$ is smooth of dimension $l$.
\end{remark}
We define the splitting tensor of $\Delta$ in the same way as in Definition \ref{definicion spliting tensor}, but for $\Delta$ instead of $\Gamma$. Here we also denote $X^h$ for the projection of $X\in TM$ on $\Delta^\perp$. In this context, equation (\ref{beta y CT}) is also valid for the splitting tensor of $\Delta$ since $\beta$ satisfies Codazzi equation.
    
\begin{defn}
    \normalfont Suppose that $\beta:TM\times TM\rightarrow F$ is a bilinear tensor with $l=\dim(\Delta^\perp)$ and that it is diagonalizable by the smooth frame $X_1,X_2,\ldots,X_{l}\in\Gamma(\Delta^\perp_\mathbb{C})$ with $\overline{X_{2j-1}}=X_{2j}$ for $j\leq s$ and $\overline{X_{j}}=X_{j}$ for $j>2s$ for some $s$. We say that it diagonalizes {\it strongly} if for every non-empty subset $S\subseteq\{1,\ldots,l\}$ with $\#S\leq 3$, the set $\{\beta(X_i,X_i)\}_{i\in S}$ is pointwise $\mathbb{C}$-linearly independent. 
\end{defn}
    As before, we will denote $\overline{j}$ the index associated to $j$ such that $\overline{X_j}=X_{\overline{j}}$.
\begin{prop}\label{demostracion Bajar coordenadas} 
    Let $\beta$ be a bilinear tensor satisfying Codazzi equation and
    \begin{equation}\label{R(X,Y)T=0}
        (R(X,T)S)^h=0\quad\forall T,S\in\Delta,\, X\in TM.
    \end{equation}
    Assume that $\beta$ strongly diagonalizes by $X_1,\ldots,X_l$. Then there exist $f_i:M^n\rightarrow\mathbb{C}$ satisfying $\overline{f_{j}}=f_{\overline{j}}$ $\forall j$ and (local) coordinates $(z_1,\ldots,z_s,w_{2s+1},\ldots,w_l)\in\mathbb{C}^s\times\mathbb{R}^r$ for $L^l$, such that for $Z_i=f_i X_i$, we have $\partial_{u_i}\circ\pi =\pi_*\circ Z_i$ 
    where $(u_0,\ldots,u_l)=(z_1,\overline{z_1},\ldots,z_s,\overline{z_s}, w_{2s+1},\ldots, w_l)$ and $\pi:M^n\rightarrow L^l$ is the canonical projection.
\end{prop}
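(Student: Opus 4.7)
The plan is to rescale the frame $\{X_i\}$ to $Z_i=f_iX_i$ so that the projected fields $\pi_{*}Z_i$ are coordinate vector fields on $L^l$. I would organize this into three steps: (i) each complex line $\mathbb{C}X_i$ modulo $\Delta$ is $\Delta$-invariant and descends as a smooth complex line subbundle of $(TL^l)_{\mathbb{C}}$; (ii) the line admits a projectable section $Z_i=f_iX_i$ with $\overline{f_{\overline{i}}}=f_i$; (iii) the projected frame $\{\hat{Z}_i=\pi_{*}Z_i\}$ is holonomic, so Frobenius produces the coordinates after one last rescaling.

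Step (i) comes from Codazzi applied to the triple $(T,X_i,X_j)$ with $T\in\Delta$ and $i\neq j$. Because $\beta(T,\cdot)=0$, the identity $(\nabla_T\beta)(X_i,X_j)=(\nabla_{X_i}\beta)(T,X_j)$ reduces to a linear relation between $\beta(X_i,X_i)$ and $\beta(X_j,X_j)$ whose coefficients are the off-diagonal components of $(\nabla_T X_i)^h$, $(\nabla_T X_j)^h$ and $C_TX_i=-(\nabla_{X_i}T)^h$ in the basis $\{X_m\}$. Strong diagonalization in its two-element form forces every such coefficient to vanish, so both $(\nabla_T X_i)^h$ and $C_TX_i$ are complex multiples of $X_i$; hence $[T,X_i]^h=\lambda_i(T)X_i$ for some 1-form $\lambda_i$ on $\Delta$, which is precisely the condition that the line $\mathbb{C}X_i$ mod $\Delta$ descends smoothly to $L^l$. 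For step (iii), apply Codazzi to a triple of distinct indices $(X_i,X_j,X_k)$: the identity becomes a linear combination of the three pointwise independent vectors $\beta(X_m,X_m)$, $m\in\{i,j,k\}$, and strong diagonalization in its three-element form forces the $X_k$-component of $(\nabla_{X_i}X_j)^h$ to vanish whenever $k\notin\{i,j\}$. Thus $[X_i,X_j]^h\in\mathrm{span}\{X_i,X_j\}$, and after projection $[\hat{Z}_i,\hat{Z}_j]\in\mathrm{span}\{\hat{Z}_i,\hat{Z}_j\}$.

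For step (ii), a scalar $f_i$ makes $Z_i=f_iX_i$ projectable iff $T(f_i)=-f_i\lambda_i(T)$ for every $T\in\Delta$, a linear first-order system along each leaf of $\Delta$. Its integrability on a leaf reduces to $d\lambda_i|_\Delta=0$, which follows from the Jacobi identity applied to $(T_1,T_2,X_i)$ together with $[T_1,T_2]\in\Delta$; smoothness transverse to the leaves uses the curvature hypothesis $(R(X,T)S)^h=0$, which guarantees flatness of the natural partial connection on $\mathcal{L}_i$ along $\Delta$. Choosing transversal initial data with $\overline{f_{\overline{i}}}=f_i$ propagates the conjugation throughout a neighborhood. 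With (i)--(iii) in place, each hyperplane distribution $D_k:=\mathrm{span}\{\hat{Z}_j:j\neq k\}$ on $L^l$ is involutive, so Frobenius produces first integrals $u_k$ with $D_k=\ker du_k$. These $u_k$ form a local chart on $L^l$, and absorbing $1/\hat{Z}_k(u_k)$ into $f_k$ yields $\pi_{*}Z_k=\partial_{u_k}$; since $\overline{D_k}=D_{\overline{k}}$ one may choose $u_{\overline{k}}=\overline{u_k}$, producing the required conjugate coordinates $(z_1,\overline{z_1},\ldots,z_s,\overline{z_s},w_{2s+1},\ldots,w_l)$. The main technical obstacle lies in step (ii): even with $\lambda_i|_\Delta$ closed on every leaf, assembling a single smooth $f_i$ on a full neighborhood in $M^n$ rather than leaf-by-leaf requires the transverse compatibility furnished by the curvature hypothesis on $\Delta$.
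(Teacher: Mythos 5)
Your steps (i)--(iii) are sound and reproduce the algebraic core of the paper's argument: Codazzi applied to $(T,X_i,X_j)$ and to distinct triples $(X_i,X_j,X_k)$, together with strong diagonalization, does give $[T,X_i]^h\in\mathbb{C}X_i$ and $[X_i,X_j]^h\in\operatorname{span}_{\mathbb{C}}\{X_i,X_j\}\oplus\Delta_{\mathbb{C}}$, and the leafwise ODE $T(f_i)=-f_i\lambda_i(T)$ is integrable because the relevant $1$-form is closed on $\Delta\times\Delta$ by Jacobi. Your packaging of the final step via the involutive hyperplane distributions $D_k=\operatorname{span}\{\hat{Z}_j:j\neq k\}$ is also a legitimate alternative to the paper's explicit integration of the closed $1$-forms $\hat{\sigma}_i$ over the distributions $\Omega_i=\operatorname{Re}\big(\operatorname{span}_{\mathbb{C}}\{\Delta,X_j\}_{j\neq i,\overline{i}}\big)$, and for the real indices $k>2s$ it works: there $D_k$ is conjugation-invariant, $\operatorname{Re}(D_k)$ is a real corank-one involutive distribution, and real Frobenius does produce the first integral $w_k$.

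The genuine gap is the complex-index case. For $k\leq 2s$ with $k\neq\overline{k}$, the distribution $D_k\subseteq (TL)_{\mathbb{C}}$ is \emph{not} closed under conjugation ($\hat{Z}_{\overline{k}}\in D_k$ but $\hat{Z}_k\notin D_k$), so it is not the complexification of any real distribution, and the Frobenius theorem simply does not apply to it: involutivity of a complex subbundle of $(TL)_{\mathbb{C}}$ does not by itself yield a complex-valued first integral. Concretely, after quotienting by the real distribution underlying $D_k\cap D_{\overline{k}}$, finding $u_k$ with $\ker du_k=D_k$ is exactly the problem of integrating an almost complex structure on a surface, i.e.\ the existence of isothermal coordinates (a Newlander--Nirenberg-type statement proved by elliptic PDE, not by Frobenius). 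This is precisely the point where the paper does extra work: it first rescales the real and imaginary parts $U_j,V_j$ of the conjugate pairs to obtain a genuinely real commuting frame and a real chart $(x_j,y_j,\ldots)$, and only then invokes the existence of isothermal charts for the metric $e^{2\hat{a}_j}dx_j^2+e^{2\hat{b}_j}dy_j^2$ to manufacture the complex coordinates $z_j$. Since the complex conjugate directions are exactly the elliptic case that the proposition is designed to cover, this step cannot be absorbed into the word ``Frobenius''; as written, your proof only establishes the conclusion when all the diagonalizing directions are real ($s=0$). A secondary, minor imprecision: the curvature hypothesis (\ref{R(X,Y)T=0}) is not what guarantees transverse smoothness of $f_i$ (smooth dependence on initial data does that); in the paper it is used to split $d\lambda_i$ from $da_i$ on $\Delta\times\Delta$ so that $\nabla_TX_i$ can be normalized to zero before the mixed closedness identities $d\hat{\sigma}_i(T,X_j)=0$ are verified.
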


\begin{proof}
    Given any vector $Y$, we write $Y^i$ for the component of $Y^h$ with respect to $X_i$, that is, $Y^h=\sum_i Y^i X_i$. 
    
    By (\ref{beta y CT}) for $X=X_i$ and $Y=X_j$ with $i\neq j$, we have that 
    $$(C_T X_i)^j\beta(X_j,X_j)=(C_T X_j)^i\beta(X_i,X_i).$$ 
    Since $\beta$ diagonalizes strongly, the last equation implies that there exist 1-forms $\lambda_i:\Delta_{\mathbb{C}}\rightarrow\mathbb{C}$ such that $C_TX_i=\lambda_i(T)X_i$.
    
    Using Codazzi equation (\ref{definicion de ser tensor Codazzi}) for $X=T$, $Y=X_i$ and $Z=X_j$, we get
    $$(\nabla_TX_i)^j\beta(X_j,X_j)+(\nabla_TX_j)^i\beta(X_i,X_i)=0.$$
    Hence, $\nabla_TX_i=a_i(T)X_i$ for some 1-forms $a_i$. First, we claim that we can assume that $a_i=0$, to simplify computations. 
    
    Equation (\ref{R(X,Y)T=0}) can be expressed in terms of the splitting tensor as
    $$\nabla_T C_S=C_S C_T+C_{\nabla_T S}.$$
    Thus
    \begin{equation}\label{dlambda_i=0}
        0=(\nabla_T C_S(X_i)-C_SC_T(X_i)-C_{\nabla_T S}(X_i))-(\nabla_S C_T(X_i)-C_TC_S(X_i)-C_{\nabla_S T}(X_i))=d\lambda_i(T,S)X_i.
    \end{equation}
    Using Jacobi identity for $T,S$ and $X_i$, and analyzing the vertical component, we get that
    \begin{equation*}
        da_i(T,S)+d\lambda_i(T,S)=0.
    \end{equation*}
    We get from (\ref{dlambda_i=0}) that $da_i(T,S)=0$. We integrate the 1-forms $a_i$ along the nullity leaves giving arbitrary values along a transversal submanifold. This defines functions $r_i$ such that $dr_i(T)=a_i(T)$. Notice that we can do this in a way that $\overline{r_i}=r_{\overline{i}}$. By replacing $X_i$ with $e^{-r_i}X_i$, we can suppose that $\nabla_T X_i=0$ for all $T\in\Delta$, as we claimed.
    
    Codazzi equation (\ref{definicion de ser tensor Codazzi}) for $X=X_i$, $Y=X_j$ and $Z=X_k$ with $i\neq j\neq k\neq i$ gives
    $$-([X_i,X_j])^k\beta(X_k,X_k)=-(\nabla_{X_i}X_k)^j\beta(X_j,X_j)-(\nabla_{X_j}X_k)^i\beta(X_i,X_i).$$
    As $\beta$ diagonalizes strongly, we get that $(\nabla_{X_i}X_j)^k=0$ for all distinct indices. Then, there exist $a_{i}^j,b_{j}^i,r_{i}^j:M^n\rightarrow \mathbb{C}$, $1\leq i\neq j\leq l$, such that
    \begin{equation}\label{ai bi ri}
        \nabla_{X_i}X_j+a_{i}^jX_i-b_{j}^iX_j\in\Delta_\mathbb{C} \quad\text{and}\quad [X_i,X_j]+r_{i}^jX_i-r_{j}^iX_j\in\Delta_\mathbb{C}.
    \end{equation}
    Clearly, $r_{i}^j=a_{i}^j+b_{i}^j$. 
    
    As in Proposition 10 of \cite{DFT}, to project $Z_i$ to $L^l$ we need that $[Z_i,T]\in\Delta_\mathbb{C}$ for all $T\in\Delta_\mathbb{C}$, and to be a local coordinate system we also need that $[Z_i,Z_j]\in\Delta_\mathbb{C}$ for any $i,j$. 
    Write $f_i=e^{g_i}$. The first condition is equivalent to $T(g_i)=-\lambda_i(T)$, while the second one is equivalent to $X_i(g_j)=-r^i_j$ and $X_j(g_i)=-r^j_i$. To find such functions, consider the $\mathbb{C}$-linear 1-form $\hat{\sigma}_i:\text{span}_\mathbb{C}\{\Delta,X_j\}_{j\neq i}\rightarrow \mathbb{C}$, given by
    $$\hat{\sigma}_i(T)=-\lambda_i(T),\quad \hat{\sigma}_i(X_j)=-r^j_i.$$
    Let's prove that $\sigma_i$ is exact. We have already proved that $d\hat{\sigma}_i|_{\Delta\times\Delta}=0$ in (\ref{dlambda_i=0}). Now, we need to prove that
    \begin{equation}\label{eq:T-j}
        d\hat{\sigma}_i(T,X_j)=-T(r^j_i)+X_j(\lambda_i(T))-\lambda_i(\nabla^v_{X_j}T)+\lambda_j(T)r^j_i=0,\quad\forall j\neq i.
    \end{equation}
    By Jacobi identity for $i\neq j$
    \begin{align*}
        0&=[T,[X_i,X_j]]^h+[X_j,[T,X_i]]^h-[X_i,[T,X_j]]^h\\
        &=(\nabla_T[X_i,X_j]^h+C_T([X_i,X_j]^h)+[X_i,-\nabla_{X_j}^vT+\lambda_j(T)X_j]^h-[X_j,-\nabla_{X_i}^vT+\lambda_i(T)X_j]^h\\
        &=-T(r_i^j)X_i+T(r_j^i)X_j-\lambda_i(T)r_i^jX_i+\lambda_j(T)r_j^iX_j+\lambda_i(\nabla^v_{X_j}T)X_i+X_i(\lambda_j(T))X_j\\
        &\quad+\lambda_j(T)(-r_i^jX_i-r_j^iX_j)-\lambda_j(\nabla^v_{X_i}T)X_j+X_j(\lambda_i(T))X_i-\lambda_i(T)(-r_j^iX_j-r_i^jX_i)\\
        &=d\hat{\sigma}_i(T,X_j)X_i-d\hat{\sigma}_j(T,X_i)X_j,
    \end{align*}
    which shows (\ref{eq:T-j}). Also by Jacobi identity we have for three distinct indices that 
    \begin{align*}
        0&=\sum[X_i,[X_j,X_k]]^{h}=\sum \Big(-\lambda_i([X_j,X_k]^v)X_i+\nabla^h_{X_i}(-r^k_jX_j+r^j_kX_k)-\nabla^h_{-r^k_jX_j+r^j_kX_k}X_i\Big)\\
        &=\sum \Big(-\lambda_i([X_j,X_k]^v)X_i-X_i(r^k_j)X_j-r^k_j\nabla^h_{X_i}X_j+X_i(r^j_k)X_k+r^j_k\nabla^h_{X_i}X_k+r^k_j\nabla^h_{X_j}X_i-r^j_k\nabla^h_{X_k}X_i\Big)\\
        &=\sum\Big(-\lambda_i([X_j,X_k]^v)X_i-X_i(r^k_j)X_j-r^k_j(-a^j_iX_i+b^i_jX_j)+X_i(r^j_k)X_k+r^j_k(-a^k_iX_i+b^i_kX_k)\\
        &\quad\quad+r^k_j(-a^i_jX_j+b^j_iX_i)-r^j_k(-a^i_kX_k+b^k_iX_i)\Big)\\
        &=\sum\Big(-\lambda_i([X_j,X_k]^v)X_i+r^k_jr^j_iX_i-r^j_kr^k_iX_i-X_i(r^k_j)X_j-r^k_jr^i_jX_j+X_i(r^j_k)X_k+r^j_kr^i_k X_k\Big)\\
        &=\sum\Big(-\lambda_i([X_j,X_k]^v)X_i+r^k_jr^j_iX_i-r^j_kr^k_iX_i-X_k(r^j_i)X_i-r^j_ir^k_iX_i+X_j(r^k_i)X_i+r^k_ir^j_i X_i\Big)\\
        &=\sum \Big(\hat{\sigma}_i([X_j,X_k])+X_k(\hat{\sigma}_i(X_j))-X_j(\hat{\sigma}_i(X_k))\Big)X_i=-\sum d\hat{\sigma}_i(X_j,X_k)X_i.
    \end{align*}
    This shows $d\hat{\sigma}_i(X_j,X_k)=0$ and proves the exactness of $\hat{\sigma}_i$.
    
    For $1\leq i\leq l$, consider 
    $$\hat{\Omega}_i=\text{span}_{\mathbb{C}}\{\Delta,X_j\}_{j\neq i, \overline{i}}.$$ 
    As the $X_i$'s are the eigenvectors of the splitting tensors, by (\ref{ai bi ri}) $\hat{\Omega}_i$ is involutive, namely, it is closed with respect to the Lie bracket extended by $\mathbb{C}-$bilinearity.
    Since $\hat{\Omega}_i$ is closed with respect to conjugation of indices, this implies that $\Omega_i=\text{Re}(\hat{\Omega}_i)\subseteq TM$ is integrable in the Frobenius sense. Consider $\sigma_i=\hat{\sigma}_i|_{\Omega_i}$ which is a closed 1-form, since $\hat{\sigma}_i$ is closed. 
    Therefore, we can integrate $\sigma_i$ on $M^n$ by defining arbitrarily values along a transversal submanifold to $\Omega_i$. Thus, there exists $g_i$'s such that $dg_i|_{\Omega_i}=\sigma_i$. 
    This can be done in a way that $\overline{g_i}=g_{\overline{i}}$. 
    
    Consider then $Y_i=e^{g_i}X_i$. Those vectors satisfy that $[Y_i,T]\in\Delta_{\mathbb{C}}$ and $[Y_i,Y_j]\in\Delta_\mathbb{C}$ for any $T\in\Delta$ and $i\neq \overline{j}$. 
    Using Proposition 10 of \cite{DFT}, let $A_i\in (TL)_\mathbb{C}$ be the local frame such that $A_i\circ\pi=\pi_{*}Y_i$. They satisfy that $[A_i,A_j]=0$ for any $i\neq \overline{j}$. If there are no complex indices, we are done. Thus, suppose that this is not the case.
    
    Write $A_{2j}=U_j+iV_j$ for $j\leq s$. By (\ref{ai bi ri}), there exist $a_j,b_j:L^l\rightarrow\mathbb{R}$ such that
    \begin{equation*}
        [A_{2j-1},A_{2j}]+(a_j+ib_j)A_{2j-1}-(a_j-ib_j)A_{2j}=0,
    \end{equation*}
    which in terms of the $U_j$'s and $V_j$'s can be expressed as
    \begin{equation*}
        [U_j,V_j]+b_jU_j-a_jV_j=0.
    \end{equation*}
    For $k\neq 2j,2j-1$, from Jacobi identity using the last condition we get that
    \begin{equation}\label{dsa}
        A_k(a_j)=A_k(b_j)=0.
    \end{equation}
    Thus, there are (local) functions $\hat{a}_j,\hat{b}_j:L^l\rightarrow \mathbb{R}$ such that the frame 
    $$\{e^{\hat{a}_1}U_1,e^{\hat{b}_1}V_1,\ldots,e^{\hat{a}_s}U_s,e^{\hat{b}_s}V_s, A_{2s+1},\ldots,A_l\},$$ 
    is commutative. Then there is a local chart $(x_1,y_1,\ldots,x_s,y_s,w_{2s+1},\ldots,w_l)$ such that the canonical vectors are this frame (locally) and $\hat{a}_j=\hat{a}_j(x_j,y_j)$ $\hat{b}_j=\hat{b}_j(x_j,y_j)$  by (\ref{dsa}). 
    
    To conclude, consider on the plane $(x_j,y_j)$ the metric $g(\partial_{x_j},\partial_{x_j})=e^{2\hat{a}_j}$, $g(\partial_{y_j},\partial_{y_j})=e^{2\hat{b}_j}$ and $g(\partial_{x_j},\partial_{y_j})=0$. 
    Since all the surfaces possess isothermal charts, there are functions $p_j=p_j(x_j,y_j)$ and $q_j=q_j(x_j,y_j)$ with $(p_j,q_j)\neq (0,0)$ such that $[p_iU_i-q_iV_i,p_iV_i+q_iV_i]=0$. 
    Thus, there is a local chart $(\hat{x}_1,\hat{y}_1,\ldots,\hat{x}_s,\hat{y}_s,w_{2s+1},\ldots,w_l)$ such that $\partial_{\hat{x}_i}=p_iU_i-q_iV_i$, $\partial_{\hat{y}_i}=q_iU_i+p_iV_i$. This chart is the chart we are looking for. Define $z_j=\hat{x}_j+i\hat{y}_j$, $f_{2j}=e^{g_{2j}}(p_j+iq_j)$, $f_{2j-1}=\overline{f_{2j}}$ for $j\leq s$ and $f_k=e^{g_k}$ for $k> 2s$. 
\end{proof}

\printbibliography

@article{AD, 
title = {\textit{Conformally Flat Riemannian Manifolds as Hypersurfaces of the Light Cone}}, 
volume = {\textbf{32}}, 
%DOI = {10.4153/CMB-1989-041-8}, 
%number = {\textbf{3}}, 
journal = {Canad. Math. Bull.}, 
publisher = {Cambridge University Press}, 
author = {Asperti, A. C. and Dajczer, M.}, 
year={1989}, 
pages={281–285}
}

@article{Ca,
  title={\textit{La d{\'e}formation des hypersurfaces dans l’espace euclidien r{\'e}el {\`a} $n$ dimensions}},
  author={Cartan, E. C.},
  journal={Bull. Soc. Math. France},
  volume={\textbf{44}},
  year={1916},
  pages={65-99}
}

@article{DF4,
	%doi = {10.1007/pl00005880},
	%url = {https://doi.org/10.1007%2Fpl00005880},
	year = 2001,
	%month = {aug},
	publisher = {Springer Science and Business Media {LLC}},
	volume = {\textbf{105}},
	%number = {\textbf{4}},
	pages = {507--517},
	author = {Dajczer, M. and Florit, L.},
	title = {\textit{Compositions of isometric immersions in higher codimension}},
	journal = {Manuscripta Math.},
}

@article{DF5,
author = {Dajczer, M. and Florit, L.},
year = {2004},
%month = {07},
pages = {3703-3704},
title = {\textit{A counterexample to a conjecture on flat bilinear forms}},
volume = {\textbf{132}},
journal = {Proc. Amer. Math. Soc.},
%doi = {10.1090/S0002-9939-04-07636-1}
}

@article{DF6,
 %ISSN = {00029939, 10886826},
 %URL = {http://www.jstor.org/stable/118940},
 abstract = {We characterize a large family of codimension two euclidean conformally flat submanifolds in the class of isometrically rigid ones and construct explicit examples.},
 author = {Dajczer, M. and Florit, L.},
 journal = {Proc. Amer. Math. Soc.},
 %number = {\textbf{1}},
 pages = {265--269},
 publisher = {American Mathematical Society},
 title = {\textit{Euclidean Conformally Flat Submanifolds in Codimension Two Obtained as Intersections}},
 volume = {\textbf{127}},
 year = {1999}
}

@article{DF3,
author = {Dajczer, M. and Florit, L.},
year = {2004},
%month = {12},
pages = {1105–1129},
title = {\textit{Genuine Deformations of Submanifolds}},
volume = {\textbf{12}},
journal = {Comm. Anal. Geom.},
%doi = {10.4310/CAG.2004.v12.n5.a6}
}

@article{DF,
author = {Dajczer, M. and Florit, L.},
year = {2004},
%month = {01},
pages = {195-210},
title = {\textit{Genuine Rigidity of Euclidean Submanifolds in Codimension Two}},
volume = {\textbf{106}},
journal = {Geom. Dedicata},
%doi = {10.1023/B:GEOM.0000033846.63094.46}
}

@article{DF2,
author = {Dajczer, M. and Florit, L.},
year = {1996},
%month = {01},
pages = {261–284},
title = {\textit{On conformally flat submanifolds}},
volume = {\textbf{4}},
journal = {Comm. Anal. Geom.},
%doi = {10.4310/CAG.1996.v4.n2.a3}
}

@article{DFT,
author = {Dajczer, M. and Florit, L. and Tojeiro, R.},
year = {2013},
%month = {10},
pages = {621-643},
title = {\textit{Euclidean hypersurfaces with genuine deformations in codimension two}},
volume = {\textbf{140}},
journal = {Manuscripta Math.},
%doi = {10.1007/s00229-012-0556-z}
}

@article{DFT2,
author = {Dajczer, M. and Florit, L. and Tojeiro, R.},
year = {1998},
%month = {01},
pages = {361-390},
title = {\textit{On deformable hypersurfaces in space forms}},
volume = {\textbf{174}},
journal = {Ann. Mat. Pura Appl.},
%doi = {10.1007/BF01759378}
}

@article{DG,
author = {Dajczer, M. and Gromoll, D.},
year = {1985},
%month = {01},
pages = {1–12},
title = {\textit{Gauss parametrizations and rigidity aspects of submanifolds}},
volume = {\textbf{22}},
journal = {J. Differential Geom.},
%doi = {10.4310/jdg/1214439717}
}

@article{DG2,
author = {Dajczer, M. and Gromoll, D.},
year = {1995},
%month = {09},
pages = {605-618},
title = {\textit{Isometric deformations of compact Euclidean submanifolds in codimension $2$}},
volume = {\textbf{79}},
journal = {Duke Math. J.},
%doi = {10.1215/S0012-7094-95-07915-0}
}

@article{FF,
author = {Florit, L. and Freitas, G.},
year = {2017},
%month = {03},
pages = {751–797},
title = {\textit{Classification of codimension two deformations of rank two Riemannian manifolds}},
volume = {\textbf{25}},
journal = {Comm. Anal. Geom.},
%doi = {10.4310/CAG.2017.v25.n4.a2}
}

@article{FG,
author = {Florit, L. and Guimarães, F.},
year = {2020},
%month = {03},
pages = {279–299},
title = {\textit{Singular genuine rigidity}},
volume = {\textbf{95}},
journal = {Comment. Math. Helv.},
%doi = {10.4171/CMH/488}
}

@article{KT,
author = {Kamran, N. and Tenenblat, K.},
year = {1996},
%month = {07},
pages = {237–266},
title = {\textit{Laplace transformation in higher dimensions}},
volume = {\textbf{84}},
journal = {Duke Math. J.},
%doi = {10.1215/S0012-7094-96-08409-4}
}

@article{KT2,
author = {Kamran, N. and Tenenblat, K.},
year = {1998},
%month = {04},
pages = {359-378},
title = {\textit{Periodic systems for the higher-dimensional Laplace transformation}},
volume = {\textbf{4}},
journal = {Discrete Contin. Dynam. Systems},
%doi = {10.3934/dcds.1998.4.359}
}

@article{Moo,
author = {Moore, J.},
year = {1977},
%month = {06},
pages = {449–484},
title = {\textit{Submanifolds of constant positive curvature I}},
volume = {\textbf{44}},
journal = {Duke Math. J.},
%doi = {10.1215/S0012-7094-77-04421-0}
}

@article{Sack,
author = {Sacksteder, R.},
year = {1962},
%month = {01},
pages = {929–939},
title = {\textit{The rigidity of hypersurfaces}},
volume = {\textbf{11}},
journal = {J. Math. Mech.},
%doi = {10.1512/iumj.1962.11.11051}
}

@article{Sb,
author = {Sbrana, U.},
year = {1909},
%month = {01},
pages = {1-45},
title = {\textit{Sulle varietà ad $n-1$ dimensioni deformabili nello spazio euclideo ad $n$ dimensioni}},
volume = {\textbf{27}},
journal = {Rend. Circ. Mat. Palermo},
%doi = {10.1007/BF03019643}
}

@article{YoGD2,
author = {Guajardo, D.},
title = {\textit{Genuine deformations of Euclidean hypersurfaces in higher codimensions II}},
pages = {In preparation},
}

@article{YoFlatE,
author = {Guajardo, D.},
title = {\textit{Flat Euclidean submanifolds in high codimension}},
pages = {In preparation},
}

@article{CThiperConf2,
author = {Chion, S. and Tojeiro, R.},
title = {\textit{Euclidean Hypersurfaces with Genuine Conformal
Deformations in Codimension Two}},
year = {2020},
pages = {773–826},
journal = {Bull. Braz. Math. Soc. (N.S.)},
volume = {\textbf{51}},
}

@article{DT2,
author= {Dajczer, M. and Tojeiro, R.},
year = {2020},
title = {Hypersurfaces of space forms carrying a
totally geodesic foliation},
journal = {Geom. Dedicata},
volume = {\textbf{205}},
pages = {129–146},
}

@article{DV2,
author= {Dajczer, M. and Vlachos, Th.},
year = {2016},
title = {A representation for pseudoholomorphic
surfaces in spheres},
journal = {Proc. Amer. Math. Soc.},
volume = {\textbf{144}},
pages = {3105–3113},
}

@article{DV,
author= {Dajczer, M. and Vlachos, Th.},
year = {2005},
title = {The associated family of an elliptic surface and
an application to minimal submanifolds},
journal = {Geom. Dedicata},
volume = {\textbf{178}},
pages = {259–275},
}

@article{DJV,
author= {Dajczer, M. and Jimenez, M. and Vlachos, Th.},
year = {2022},
title = {Conformal infinitesimal variations of
Euclidean hypersurfaces},
journal = {Ann. Mat. Pura Appl. (4)},
volume = {\textbf{201}},
pages = {743–768},
}

@article{DV3,
author= {Dajczer, M. and Vlachos, Th.},
year = {2017},
title = {The infinitesimally bendable Euclidean hypersurfaces},
journal = {Ann. Mat. Pura Appl. (4)},
volume = {\textbf{196}},
pages = {1961–1979},
}

@article{Bi,
author= {Bianchi, L.},
year = {1955},
title = {Opere. Vol. III. Sistemi tripli ortogonali},
journal = {Edizioni Cremonese, Roma},
volume = {},
pages = {},
}

@article{T,
author= {Tsarëv, S. P.},
year = {1985},
title = {Poisson brackets and one-dimensional Hamiltonian systems of hydrodynamic type},
journal = {Dokl. Akad. Nauk SSSR},
volume = {\textbf{282}},
pages = {534–537},
}

@article{DN,
author= {Dubrovin, B. A. and Novikov, S. P.},
year = {1983},
title = {Hamiltonian formalism of one-dimensional systems of the hydrodynamic type and the Bogolyubov-Whitham averaging method},
journal = {Dokl. Akad. Nauk SSSR},
volume = {\textbf{270}},
pages = {534–537},
}

@article{DF7,
author= {Dajczer, M. and Florit, L.},
title = {\textit{A class of austere submanifolds}},
year = {2001},
journal = {Illinois J. Math. },
volume = {\textbf{45}},
pages = {735-755},
}

IMPA – Estrada Dona Castorina, 110

22460-320, Rio de Janeiro, Brazil

{\it E-mail address}: {\tt diego.navarro.g@ug.uchile.cl}
\end{document}